\def\LocFree{\mathop{\tt LF}\nolimits}
\def\GradLocFree{\mathop{\tt GrLF}\nolimits}
\def\DescFilLocFree{\mathop{\tt FilLF^\bullet}\nolimits}
\def\AscFilLocFree{\mathop{\tt FilLF_\bullet}\nolimits}
\def\Vec{\mathop{\tt Vec}\nolimits}
\def\GradVec{\mathop{\tt GrVec}\nolimits}
\def\DescFilVec{\mathop{\tt FilVec^\bullet}\nolimits}
\newcommand{\descfil}{{{\rm fil}^\bullet}}
\newcommand{\ascfil}{{{\rm fil}_\bullet}}
\newcommand{\FZip}{\mathop{\text{$F$-{\tt Zip}}}\nolimits}
\newcommand{\GZip}{\mathop{\text{$G$-{\tt Zip}}}\nolimits}
\newcommand{\GhatZip}{\mathop{\text{$\hat G$-{\tt Zip}}}\nolimits}
\newcommand{\GZipFunctor}{\mathop{\text{$G$-{\tt ZipFun}}}\nolimits}
\def\GhZipFunctor{\mathop{\text{\smash{$\Ghat$}-{\tt ZipFun}}}\nolimits}
\newcommand{\GLnZip}{\mathop{{\rm GL}_n\text{-{\tt Zip}}}\nolimits}
\newcommand{\SLnZip}{\mathop{{\rm SL}_n\text{-{\tt Zip}}}\nolimits}
\newcommand{\CSpnZip}{\mathop{{\rm CSp}_n\text{-{\tt Zip}}}\nolimits}
\newcommand{\SpnZip}{\mathop{{\rm Sp}_n\text{-{\tt Zip}}}\nolimits}
\newcommand{\OnZip}{\mathop{{\rm O}_n\text{-{\tt Zip}}}\nolimits}
\newcommand{\COnZip}{\mathop{{\rm CO}_n\text{-{\tt Zip}}}\nolimits}
\newcommand{\UnZip}{\mathop{{\rm U}_n\text{-{\tt Zip}}}\nolimits}
\newcommand{\CUnZip}{\mathop{{\rm CU}_n\text{-{\tt Zip}}}\nolimits}
\newcommand{\SLnRep}{\mathop{{\rm SL}_n\text{-{\tt Rep}}}\nolimits}
\def\forget{{\rm forg}}
\def\Rep{\mathop{\tt Rep}\nolimits}
\def\GRep{\mathop{\text{$G$-{\tt Rep}}}\nolimits}
\def\GhRep{\mathop{\text{${\hat G}$-{\tt Rep}}}\nolimits}
\newcommand{\descgr}{\gr^\bullet_C}
\newcommand{\ascgr}{\gr_\bullet^D}
\def\Alt{{\Lambda}}
\def\Sym{{\rm S}}
\def\CHom{\mathop{\mathcal Hom}\nolimits}
\def\UBOne{{\underline{\hbox{\rm1\kern-2.3ptl}}}}
\newcommand{\UCL}{{\underline{\CL}}}
\newcommand{\UCM}{{\underline{\CM}}}
\newcommand{\UCN}{{\underline{\CN}}}
\newcommand{\UI}{{\underline{I}}}
\newcommand{\Un}{{\underline{n}}}
\newcommand{\Ghat}{{\hat G}}
\newcommand{\Hhat}{{\hat H}}
\newcommand{\Lhat}{{\hat L}}
\newcommand{\Phat}{{\hat P}}
\newcommand{\What}{{\hat W}}
\newcommand{\kbar}{{\bar k}}
\def\dothatw{{\vphantom{\hat w}\smash{\dot{\hat w}}}}
\let\into\hookrightarrow
\newcommand{\Gm}[1]{\BG_{m,#1}}
\numberwithin{equation}{section}
\def\UseTheoremCounterForNextEquation{\setcounter{equation}{\value{theorem}}\addtocounter{theorem}{1}}
\theoremstyle{plain}
\newtheorem{theorem}{Theorem}[section]
\newtheorem{lemma}[theorem]{Lemma}
\newtheorem{corollary}[theorem]{Corollary}
\newtheorem{proposition}[theorem]{Proposition}
\theoremstyle{definition}
\newtheorem{definition}[theorem]{Definition}
\newtheorem{construction}[theorem]{Construction}
\newtheorem{remark}[theorem]{Remark}
\newtheorem{example}[theorem]{Example}
\newcommand\lto{\longrightarrow}
\newcommand\ltoover[1]{\mathrel{\smash{\overset{#1}{\lto}}}}
\newcommand\varto[1]{\mathrel{\hbox to #1pt{\rightarrowfill}}}
\newcommand{\bijective}{\leftrightarrow}
\newcommand{\sends}{\mapsto}
\newcommand{\iso}{\overset{\sim}{\to}}
\newcommand{\liso}{\overset{\sim}{\lto}}
\newcommand\lisoover[1]{\mathrel{\smash{\overset{#1}{\liso}}}}
\renewcommand{\implies}{\Rightarrow}
\let\longto\longrightarrow
\let\onto\twoheadrightarrow
\def\isoto{\stackrel{\sim}{\longto}}
\let\phi\varphi
\let\epsilon\varepsilon
\let\setminus\smallsetminus
\let\emptyset\varnothing
\let\ge\geqslant
\newcommand{\dodot}{{}_{\bullet}}
\newcommand{\set}[2]{\{\,#1\ ;\  #2\,\}}
\newcommand{\updot}{{}^{\bullet}}
\newcommand{\vdual}{{}^{\vee}}
\def\Norm{\mathop{\rm Norm}\nolimits}
\def\Aut{\mathop{\rm Aut}\nolimits}
\def\Cent{\mathop{\rm Cent}\nolimits}
\def\Frob{\mathop{\rm Frob}\nolimits}
\def\End{\mathop{\rm End}\nolimits}
\def\Gal{\mathop{\rm Gal}\nolimits}
\def\Spec{\mathop{\rm Spec}\nolimits}
\def\Coker{\mathop{\rm coker}\nolimits} 
\def\Image{\mathop{\rm im}\nolimits} 
\def\Ker{\mathop{\rm ker}\nolimits}
\def\Stab{\mathop{\rm Stab}\nolimits}
\def\Transp{\mathop{\rm Transp}\nolimits}
\def\Lie{\mathop{\rm Lie}\nolimits}
\def\Ad{\mathop{\rm Ad}\nolimits}
\def\GL{\mathop{\rm GL}\nolimits}
\def\SL{\mathop{\rm SL}\nolimits}
\def\groupCO{\mathop{\rm CO}\nolimits}
\def\groupO{\mathop{\rm O{}}\nolimits}
\def\SO{\mathop{\rm SO}\nolimits}
\def\Sp{\mathop{\rm Sp}\nolimits}
\def\CSp{\mathop{\rm CSp}\nolimits}
\def\groupU{\mathop{\rm U{}}\nolimits}
\def\SU{\mathop{\rm SU}\nolimits}
\def\groupCU{\mathop{\rm CU}\nolimits}
\def\Sch{{\bf Sch}}
\def\UHom{\mathop{\underline{\rm Hom}}\nolimits}
\def\UIsom{\mathop{\underline{\rm Isom}}\nolimits}
\def\UAut{\mathop{\underline{\rm Aut}}\nolimits}
\def\sgn{\mathop{\rm sgn}\nolimits}
\def\codim{\mathop{\rm codim}\nolimits}
\def\proj{\mathop{\rm pr}\nolimits}
\def\intaut{\mathop{\rm int}\nolimits}
\def\gr{{\rm gr}}
\def\id{{\rm id}}
\def\opp{{\rm op}}
\newcommand{\BD}{{\mathbb{D}}}
\newcommand{\BF}{{\mathbb{F}}}
\newcommand{\BG}{{\mathbb{G}}}
\newcommand{\BQ}{{\mathbb{Q}}}
\newcommand{\BR}{{\mathbb{R}}}
\newcommand{\BX}{{\mathbb{X}}}
\newcommand{\BZ}{{\mathbb{Z}}}
\newcommand{\Bone}{{\mathbb{1}}}
\newcommand{\Fz}{{\mathfrak{z}}}
\newcommand{\CC}{{\mathcal C}}
\newcommand{\CD}{{\mathcal D}}
\newcommand{\CF}{{\mathcal F}}
\newcommand{\CL}{{\mathcal L}}
\newcommand{\CM}{{\mathcal M}}
\newcommand{\CN}{{\mathcal N}}
\newcommand{\CO}{{\mathcal O}}
\newcommand{\CR}{{\mathcal R}}
\newcommand{\CT}{{\mathcal T}}
\newcommand{\CU}{{\mathcal U}}
\newcommand{\CV}{{\mathcal V}}
\newcommand{\CX}{{\mathcal X}}
\newcommand{\CZ}{{\mathcal Z}}
\newcommand{\Hline}{\underline{H}}
\newcommand{\Cscr}{{\mathscr C}}
\newcommand{\Hscr}{{\mathscr H}}
\newcommand{\Oscr}{{\mathscr O}}
\newcommand{\eps}{\varepsilon}
\newcommand{\defeq}{\colonequals}
\newcommand{\leftexp}[2]{{\vphantom{#2}}^{#1}{#2}}
\newcommand{\doubleexp}[3]{\leftexp{#1}{#2}^{#3}}
\theoremstyle{definition}
\newcommand{\BIGOP}[1]{\mathop{\mathchoice%
{\raise-0.22em\hbox{\huge $#1$}}%
{\raise-0.05em\hbox{\Large $#1$}}{\hbox{\large $#1$}}{#1}}}
\newcommand{\BIGboxplus}{\mathop{\mathchoice%
{\raise-0.35em\hbox{\huge $\boxplus$}}%
{\raise-0.15em\hbox{\Large $\boxplus$}}{\hbox{\large $\boxplus$}}{\boxplus}}}
\newcounter{listcounter}
\newcounter{deflistcounter}
\newcounter{equivcounter}
\newskip{\itemsepamount}
\newskip{\topsepamount}
\newenvironment{assertionlist}{%
  \begin{list}
    {\upshape (\arabic{listcounter})}
    {\setlength{\leftmargin}{18pt}
     \setlength{\rightmargin}{0pt}
     \setlength{\itemindent}{0pt}
     \setlength{\labelsep}{5pt}
     \setlength{\labelwidth}{13pt}
     \setlength{\listparindent}{\parindent}
     \setlength{\parsep}{0pt}
     \setlength{\itemsep}{\itemsepamount}
     \setlength{\topsep}{\topsepamount}
     \usecounter{listcounter}}}
  {\end{list}}
\newenvironment{simplelist}{%
  \begin{list}{}
     {\setlength{\leftmargin}{25pt}
      \setlength{\rightmargin}{0pt}
      \setlength{\itemindent}{0pt}
      \setlength{\labelsep}{5pt}
      \setlength{\listparindent}{\parindent}
      \setlength{\parsep}{0pt}
      \setlength{\itemsep}{0pt}
      \setlength{\topsep}{0pt}}}
  {\end{list}}
\renewenvironment{enumerate}{\begin{list}{\arabic{ctr}.}{\usecounter{ctr}\setlength{\topsep}{3mm}
  \setlength{\leftmargin}{12mm}\setlength{\labelsep}{3mm}\setlength{\labelwidth}{10mm}}}{\end{list}}
\let\@fnsymbol\@arabic
\begin{document}
\newcounter{ctr} % fN|r enumerate, properties

%%%%%%%%%%%%%%%%%%%%%%%%%%%%%%%%%%%%%%%%%%%%%%%%%%%%%%%%%%%%%%%%%%%%%%%%%%%%%%%%%%%%%%%%%%%
\title{$F$-zips with additional structure}

\author{%
Richard Pink\footnote{Dept. of Mathematics,
 ETH Z\"urich, 
 CH-8092 Z\"urich,
 Switzerland,
 \tt pink@math.ethz.ch} \hspace{1cm}
 Torsten Wedhorn\footnote{Dept. of Mathematics,
 University of Paderborn, 
 D-33098 Paderborn,
 Germany,
 \tt wedhorn@math.uni-paderborn.de} \hspace{1cm}
 Paul Ziegler\footnote{Dept. of Mathematics,
 ETH Z\"urich
 CH-8092 Z\"urich,
 Switzerland,
 \tt pziegler@math.ethz.ch}
}

\date{\today}
\maketitle
\abstract{
Let $\BF_q$ be a fixed finite field of cardinality~$q$. An $F$-zip over a scheme $S$ over $\BF_q$ is a certain object of semi-linear algebra consisting of a locally free sheaf of $\CO_S$-modules with a descending filtration and an ascending filtration and a $\Frob_q$-twisted isomorphism between the respective graded sheaves.
In this article we define and systematically investigate what might be called ``$F$-zips with a $G$-structure'', for an arbitrary reductive linear algebraic group~$G$ over~$\BF_q$. 

These objects come in two incarnations.
One incarnation is an exact $\BF_q$-linear tensor functor from the category of finite dimensional representations of $G$ over~$\BF_q$ to the category of $F$-zips over~$S$. Locally any such functor has a type~$\chi$, which is a cocharacter of $G_k$ for a finite extension $k$ of $\BF_q$ that determines the ranks of the graded pieces of the filtrations.
The other incarnation is a certain $G$-torsor analogue of the notion of $F$-zips.
We prove that both incarnations define stacks that are naturally equivalent to a quotient stack of the form $[E_{G,\chi}\backslash G_k]$ that was studied in our earlier paper \cite{PWZ1}. By the results of \cite{PWZ1} they are therefore smooth algebraic stacks of dimension $0$ over~$k$. Using \cite{PWZ1} we can also classify the isomorphism classes of such objects over an algebraically closed field, describe their automorphism groups, and determine which isomorphism classes can degenerate into which others.

For classical groups we can deduce the corresponding results for twisted or untwisted symplectic, orthogonal, or unitary $F$-zips, a part of which has been described before in \cite{moonwed}. 
The results can be applied to the algebraic de Rham cohomology of smooth projective varieties (or generalizations thereof) and to truncated Barsotti-Tate groups of level $1$.
In addition, we hope that our systematic group theoretical approach will help to understand the analogue of the Ekedahl-Oort stratification of the special fibers of arbitrary Shimura varieties.
}

%\newpage
%\tableofcontents
%\newpage

%%%%%%%%%%%%%%%%%%%%%%%%%%%%%%%%%%%%%%%%%%%%%%%%%%%%%%%%%%%%%%%%%%%%%%%%%%%%%%%%%%%%%%%%%%%

\newpage
\section{Introduction}
\label{Intro}

%%%%%%%%%%%%%%%%%%%%%%%%%%%%%%%%%%%%%%%%%%%%%%%%%%%%%%

\subsection{Background}
\label{background}

Let $X \to S$ be a smooth proper morphism of schemes in characteristic $p > 0$ whose Hodge spectral sequence degenerates and is compatible with base change. In~\cite{moonwed} Moonen and the second author showed that its relative De Rham cohomology $H^{\bullet}_{\rm DR}(X/S)$ carries the structure of a so-called \emph{$F$-zip} over $S$, namely: It is a locally free sheaf of $\CO_S$-modules of finite rank together with two filtrations (the ``Hodge'' and the ``conjugate'' filtration) and a Frobenius linear isomorphism between the associated graded vector spaces (the ``Cartier isomorphism''). They showed that the isomorphism classes of $F$-zips of fixed dimension $n$ and with a fixed type of Hodge filtration over an algebraically closed field are in natural bijection with the orbits under $\GL_{n,k}$ in a variant $Z'_I$ of the varieties $Z_I$ studied by Lusztig \cite{Lusztig:parsheaves1}, 
\cite{Lusztig:parsheaves2}. They studied the analogous varieties $Z_I'$ for arbitrary reductive groups $G$ defined over a finite field and determined the $G$-orbits in them as analogues of the $G$-stable pieces in~$Z_I$. By specializing $G$ to classical groups they deduced from this a classification of $F$-zips with certain additional structure, e.g., with a non-degenerate symmetric or alternating form. 

In~\cite{PWZ1} the present authors showed that the quotient stack $[G\backslash Z_I']$ is isomorphic to a quotient stack of the form $[E_{\chi} \backslash G]$, where $E_{\chi}$ is certain linear algebraic group depending on the choice of a cocharacter $\chi$ of $G$. We studied this quotient stack in detail, classifying the $E_\chi$-orbits in $G$ by a subset of the Weyl group of $G$ and describing their closure relation using a variant of the Bruhat order.

%%%%%%%%%%%%%%%%%%%%%%%%%%%%%%%%%%%%%%%%%%%%%%%

\subsection{Main idea}
% \subsection{Main concepts / results}
\label{Mi}

The aim of this paper is to define and investigate what might be called ``$F$-zips with a $G$-structure'', for an arbitrary reductive linear algebraic group~$G$. 

As a guideline let us first review the analogous case of vector bundles. Recall that giving a vector bundle $E$ of constant rank $n$ on a manifold or a scheme $S$ over a field $k$ is equivalent to giving the associated $\GL_{n,k}$-torsor. For a subgroup $G \subset\GL_{n,k}$, the choice of a $G$-torsor $I$ within this $\GL_{n,k}$-torsor is called a $G$-structure on~$E$. The vector bundle $E$ can be recovered as the pushout of $I$ with the given $n$-dimensional representation of~$G$, so giving a vector bundle with a $G$-structure is really equivalent to giving a $G$-torsor~$I$. 

At this point we can disregard the special role of the original representation and form the pushout of $I$ with all finite dimensional representations of~$G$. This yields an exact $k$-linear tensor functor from the Tannakian category $\GRep$ of finite dimensional representations of $G$ over $k$ to the category of vector bundles on~$S$, which is known (for example by Nori \cite[Prop.~2.9]{Nori1976}) to be again equivalent to giving~$I$.
Altogether such a functor is therefore equivalent to giving a vector bundle with a $G$-structure on~$S$.

These observations suggest that objects with a $G$-structure in a more general exact $k$-linear tensor category $\CT$ should be equivalent to, or might even be defined as, exact $k$-linear tensor functors $\GRep\to\CT$, and that they should be equivalent to $G$-torsor analogues of the objects in~$\CT$. In the cases of graded or filtered vector bundles equivalences of this kind were in fact derived in some cases in Saavedra \cite[IV.1--2]{Saavedra} and in general by the third author \cite{ZieglerFFF}.
The principle was also applied to $F$-isocrystals with additional structure by Rapoport and Richartz \cite{RaRi1996}.

The program for the present paper is therefore to develop this approach for the category of $F$-zips and to classify the ensuing objects using the results of~\cite{PWZ1}.

%%%%%%%%%%%%%%%%%%%%%%%%%%%%%%%%%%%%%%%%%%%%%%%%%%%%%%

\subsection{$F$-zips with a $G$-structure}
\label{FzwaGs}

Let $\BF_q$ be a fixed finite field with $q$ elements, and consider a scheme $S$ over~$\BF_q$. Recall from \cite{moonwed} that an \emph{$F$-zip over $S$} is a tuple $\UCM = (\CM,C^\bullet,D_\bullet,\phi_\bullet)$ consisting of a locally free sheaf of $\CO_S$-modules of finite rank $\CM$ on~$S$, a descending filtration $C^\bullet$ and an ascending filtration $D_\bullet$ of~$\CM$, and an $\CO_S$-linear isomorphism $\phi_i\colon (\gr_C^i\CM)^{(q)} \stackrel{\sim}{\to} \gr^D_i\CM$ for every $i\in\BZ$, where $(\ )^{(q)}$ denotes the pullback by the Frobenius morphism $x\mapsto x^q$. In a natural way (see Section~\ref{Fzips}) the $F$-zips over $S$ are the objects of an exact $\BF_q$-linear tensor category $\FZip(S)$.

Let $G$ be a reductive linear algebraic group over~$\BF_q$, and let $k$ be a finite extension of~$\BF_q$. In the body of the paper we consider not necessarily connected groups, 
% with a reductive identity component, 
but to simplify notations in this introduction we stick to a connected group~$G$.
For simplicity we also assume that $G$ splits over~$k$, so that every conjugacy class of cocharacters of $G$ over any extension field of~$k$ possesses a representative that is defined over~$k$. Let $\GRep$ denote the $\BF_q$-linear abelian tensor category of finite-dimensional rational representations of $G$ over~$\BF_q$. The role of ``$F$-zips with a $G$-structure'' is played by the following objects:

\begin{definition}[cf.\ Definition~\ref{ZipFunctorDefinition}]
\label{0GZipFuncDef}
For any scheme $S$ over~$k$, a \emph{$G$-zip functor over~$S$} is an exact $\BF_q$-linear tensor functor 
$$\Fz\colon \GRep\to\FZip(S).$$
\end{definition}

As $S$ varies, these objects form a category $\GZipFunctor$ fibered in groupoids over the category of schemes over $k$. 
It is not hard to show that $\GZipFunctor$ is a stack over~$k$ (see Proposition~\ref{GZipFStack}). It possesses a natural decomposition that is indexed by conjugacy classes of cocharacters of~$G$, defined as follows.

\medskip
Let $\chi$ be a cocharacter of the group $G_k$ obtained from $G$ by base change. Then $\chi$ induces a grading on $V_k := V\otimes_{\BF_q}k$ for every representation $V$ of~$G$ and thus an $\BF_q$-linear tensor functor $\gamma_\chi$ from $\GRep$ to the category of graded $k$-vector spaces. 
On the other hand, any $G$-zip functor $\Fz$ over~$S$ induces  an $\BF_q$-linear tensor functor from $\GRep$ to the category of graded locally free sheaves of $\CO_S$-modules on~$S$ which sends $V$ to $\descgr\circ\Fz(V)$.

\begin{definition}[cf.\ Definitions \ref{DefTypeChi} and \ref{FunctorType}]
\label{0GZipFuncTypeDef}
A $G$-zip functor $\Fz$ over $S$ is called \emph{of type $\chi$} if the graded fiber functors $\descgr\circ\Fz$ and $\gamma_\chi$ are fpqc-locally isomorphic. The substack of $\GZipFunctor$ of $G$-zip functors of type $\chi$ is denoted $\GZipFunctor^\chi$.
\end{definition}

\begin{theorem}[cf.\ Corollary~\ref{GZipFDecomp}]
\label{0GZipFuncType}
Every $G$-zip functor over a connected scheme has a type. 
% The fibered category $\GZipFunctor$ is an algebraic stack over $k$ 
Each $\GZipFunctor^\chi$ is an open and closed substack of $\GZipFunctor$.
\end{theorem}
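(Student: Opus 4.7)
The plan is to rephrase everything in terms of the underlying graded fiber functor and appeal to the Tannakian classification of such functors on $\GRep$.

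For any $\Fz\in\GZipFunctor(S)$, composition with $\descgr$ yields an exact $\BF_q$-linear tensor functor
\[
\omega := \descgr\circ\Fz \colon \GRep \lto \GradLocFree(S),
\]
and by Definition~\ref{0GZipFuncTypeDef}, $\Fz$ is of type $\chi$ if and only if $\omega$ is fpqc-locally isomorphic to $\gamma_\chi\otimes_k\CO_S$. Let $\CG$ denote the $k$-stack whose groupoid of $S$-points consists of such exact $\BF_q$-linear tensor functors $\omega$. Then $\Fz\mapsto\omega$ defines a morphism of $k$-stacks $\GZipFunctor\to\CG$, and $\GZipFunctor^\chi$ is by construction the pullback of the substack $\CG^\chi\subset\CG$ of graded fiber functors of type $[\chi]$. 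It therefore suffices to prove the analogous decomposition $\CG=\coprod_{[\chi]}\CG^\chi$ into open and closed substacks.

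I would invoke the Tannakian classification of graded fiber functors on the representation category of a reductive group, which is the content of \cite{ZieglerFFF} (and compare Saavedra~\cite[IV.1--2]{Saavedra} for closely related graded and filtered settings). This says that the assignment $\chi\mapsto\gamma_\chi$ extends to an isomorphism of $k$-stacks between $\CG$ and the disjoint union $\coprod_{[\chi]} BL_\chi$, where $[\chi]$ runs over $G(k)$-conjugacy classes of cocharacters of $G_k$ and $L_\chi$ denotes the centralizer of $\chi$ in $G_k$. The summands on the left are manifestly disjoint open and closed substacks, hence the same holds for the $\CG^\chi$. Pulling this decomposition back along $\GZipFunctor\to\CG$ yields the desired clopen decomposition of $\GZipFunctor$, which establishes the second assertion. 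For the first, if $S$ is connected then the morphism $S\to\CG$ factors through exactly one connected component, so $\Fz$ has a well-defined unique type.

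The main obstacle is the Tannakian classification itself: a pointwise splitting of the underlying filtration must be upgraded to a global trivialization over an fpqc cover, and the resulting conjugacy class $[\chi]$ must be seen to depend only on $\omega$, not on the trivialization. This is handled in \cite{ZieglerFFF}; granted it, the rest of the argument is the routine transport of a clopen decomposition along a morphism of stacks. As a more hands-on alternative for the open-and-closed property alone, one can fix a faithful representation $V$ of $G$ over $\BF_q$ and observe that each rank function $s\mapsto\rank(\descgr^i\Fz(V))_s$ is locally constant on $S$; the combined collection of these rank functions determines $[\chi]$ at each point, and on a connected $S$ one obtains a single type in this way.
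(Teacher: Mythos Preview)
Your main argument is correct and is essentially the paper's own approach: pass from $\Fz$ to the graded fiber functor $\descgr\circ\Fz$ and invoke the classification of graded fiber functors from \cite{ZieglerFFF}. The paper records exactly this input as Theorem~\ref{CocharacterConstant1} (existence and uniqueness of the type for connected~$S$) together with Theorem~\ref{GradTorsor1a} (the $\Lhat$-torsor description), and the proof of Corollary~\ref{GZipFDecomp} is then a one-line citation. Your packaging of this as a stack isomorphism $\CG\cong\coprod_{[\chi]}BL_\chi$ is a mild rephrasing of the same content.

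Your ``hands-on alternative'' at the end, however, has a genuine gap. The rank functions of the graded pieces of a \emph{single} faithful representation~$V$ do not determine $[\chi]$ in general. For example, take $G=\GL_2\times\GL_2$ with $V$ the direct sum of the two standard representations; the cocharacters $\chi_1=(\operatorname{diag}(t,1),1)$ and $\chi_2=(1,\operatorname{diag}(t,1))$ both give the weight multiset $\{1,0,0,0\}$ on~$V$, hence identical rank functions, yet they are not $G$-conjugate. So the locus of a given type is not cut out by these rank conditions alone. To make this idea work you would need the rank functions on \emph{all} representations simultaneously---and proving that this data is fpqc-locally constant and determines~$[\chi]$ is precisely the content of the Tannakian result you already cited. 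The alternative is therefore not really more elementary; drop it, or replace the single~$V$ by the full category $\GRep$.
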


% The first assertion follows from the existence of splittings of filtered fiber functors proved by the third author in~\cite{ZieglerFFF}.

In Section~\ref{Fadd} we work out equivalent but simpler descriptions of $G$-zip functors for certain classical groups. The rough idea in all cases is that any $G$-zip functor is already determined up to unique isomorphism by its restriction to a certain finite subcategory of $\GRep$ and that, conversely, any suitable functor from this subcategory to the category of $F$-zips extends to a $G$-zip functor on all of $\GRep$. For instance, giving a $\GL_n$-zip functor is equivalent to giving an $F$-zip $\UCM$ of constant rank~$n$, and giving an $\SL_n$-zip functor is equivalent to giving an $F$-zip $\UCM$ of constant rank~$n$ together with an isomorphism between its highest exterior power $\Alt^n\UCM$ and the unit object $\UBOne(0)$. Similarly, giving an $\Sp_n$-zip, resp.\ $\groupO_n$-zip functor is equivalent to giving a symplectic, resp.\ orthogonal $F$-zip of constant rank~$n$, by which we mean an $F$-zip $\UCM$ of constant rank~$n$ together with an epimorphism of $F$-zips $\Alt^2\UCM\onto \UBOne(0)$, resp.\ $\Sym^2\UCM\onto \UBOne(0)$, whose underlying pairing of locally free sheaves is nondegenerate everywhere. We also discuss the relation between $\groupU_{n}$-zip functors and unitary $F$-zips, as well as twisted versions of these equivalences associated to the groups of similitudes $\CSp_n$ and $\groupCO_n$ and $\groupCU_n$.

%%%%%%%%%%%%%%%%%%%%%%%%%%%%%%%%%%%%%%%%%%%%%%%%%%%%%%

\subsection{$G$-zips}
\label{Gz}

To describe the stack of $G$-zip functors $\GZipFunctor^\chi$ in detail we use the following $G$-torsor analogue of $F$-zips. Let $G$ and $\chi$ be as above, and let $P_\pm = L\ltimes U_\pm$ be the associated pair of opposite parabolic subgroups of~$G_k$.

\begin{definition}[cf. Definition \ref{GZipDef}]\label{0GZipDef}
A \emph{$G$-zip of type $\chi$} over a scheme $S$ over $k$ is a tuple $\UI = (I,I_+,I_-,\iota)$ consisting of a right $G_k$-torsor $I$ over~$S$, a right $P_+$-torsor $I_+\subset\nobreak I$, a right $P^{(q)}_-$-torsor $I_-\subset I$, and an isomorphism of $L^{(q)}$-torsors $\iota\colon I^{(q)}_+/U^{(q)}_+ \stackrel{\sim}{\longto} I_-/U^{(q)}_-$.
\end{definition}

As $S$ varies, these objects form a category $\GZip^\chi$ fibered in groupoids over the category of schemes over $k$. It is not hard to show that $\GZip^\chi$ is a stack over~$k$ (see Proposition~\ref{GZipsStack}).

\medskip
To $G$ and $\chi$ we can also associate a natural \emph{algebraic zip datum} in the sense of~\cite{PWZ1} (see Definition \ref{ZipDatumDef}). The associated \emph{zip group} is the linear algebraic group 
\[
E_{G,\chi} \defeq \bigl\{(\ell u_+,\ell^{(q)}u_-) \bigm| \ell\in L,\ u_+\in U_+, u_-\in U_-^{(q)}\bigr\} 
\ \subset\ P_+ \times_k P_-^{(q)}
\]
which acts from the left hand side on $G_k$ by $(p_+,p_-)\cdot g \ \defeq\ p_+ g p_-^{-1}$. We can thus form the algebraic quotient stack $[E_{G,\chi}\backslash G_k]$.

\begin{theorem}[cf.\ Proposition~\ref{GZipStack}, Theorem~\ref{GZip=GZipFun}, and Corollary~\ref{SmoothStack}]
\label{0IdentifyGZips}
The stacks $\GZipFunctor^{\chi}$ and $\GZip^{\chi}$ and $[E_{G,\chi}\backslash G_k]$ are naturally equivalent. They are smooth algebraic stacks of dimension $0$ over~$k$.
\end{theorem}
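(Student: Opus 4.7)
The plan is to prove the theorem in three stages: first identify $\GZip^\chi$ with the quotient stack $[E_{G,\chi}\backslash G_k]$ by a direct torsor-trivialization argument, then identify $\GZipFunctor^\chi$ with $\GZip^\chi$ via Tannakian reconstruction, and finally deduce smoothness and the dimension from the explicit quotient presentation combined with the results of \cite{PWZ1}.

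For the first equivalence, I would build a morphism $G_k\to\GZip^\chi$ as follows. To any $g\in G_k(S)$ associate the $G$-zip over $S$ whose underlying $G_k$-torsor is the trivial torsor $G_{k,S}$, equipped with the canonical $P_+$-reduction $P_{+,S}\subset G_{k,S}$, with the $P_-^{(q)}$-reduction $g\cdot P_{-,S}^{(q)}\subset G_{k,S}$, and with the isomorphism of $L^{(q)}$-torsors given by left multiplication by $g$ composed with the canonical identifications $P_+/U_+\iso L$ and $(gP_-^{(q)})/U_-^{(q)}\iso L^{(q)}$. A direct check shows that the group $E_{G,\chi}$ acting via $(\ell u_+,\ell^{(q)}u_-)\cdot g = \ell u_+\,g\,(\ell^{(q)}u_-)^{-1}$ is precisely the automorphism group of this construction, so the morphism factors through $[E_{G,\chi}\backslash G_k]$. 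Essential surjectivity follows because every $G$-zip can be trivialized fpqc-locally on~$S$ (as $G_k$, $P_+$, $P_-^{(q)}$ and $L^{(q)}$ are smooth affine groups, their torsors are fpqc-locally trivial). Fully faithfulness is then the observation that two trivializations differ exactly by an element of $E_{G,\chi}$.

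For the second equivalence, given a $G$-zip functor $\Fz$ of type $\chi$, composition with the forgetful functor $\FZip(S)\to\LocFree(S)$ is an exact $\BF_q$-linear tensor functor, and by the Tannakian reconstruction theorem of Nori \cite[Prop.~2.9]{Nori1976} in its relative form developed by the third author in \cite{ZieglerFFF}, it corresponds to a $G_k$-torsor $I$ over~$S$. The descending filtration $C^\bullet\circ\Fz$ lands in $\DescFilLocFree(S)$ and, by the filtered Tannakian formalism of Saavedra \cite[IV.2]{Saavedra} (again in the form of \cite{ZieglerFFF}), corresponds to a parabolic reduction of $I$; the condition that $\Fz$ is of type~$\chi$ forces this reduction to be to $P_+$, yielding $I_+$. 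The ascending filtration $D_\bullet\circ\Fz$ similarly yields a $P_-^{(q)}$-reduction $I_-$, and the family $(\phi_i)$ of Cartier isomorphisms between the two graded fiber functors translates into the isomorphism of $L^{(q)}$-torsors $\iota\colon I_+^{(q)}/U_+^{(q)}\iso I_-/U_-^{(q)}$. The inverse construction sends a $G$-zip $\UI$ to the functor $V\mapsto(I\times^{G_k}V_S,\,C^\bullet,\,D_\bullet,\,\iota_*\phi_\bullet)$ where the two filtrations are induced from the parabolic reductions and $\iota_*\phi_\bullet$ is the induced graded isomorphism. Both constructions are functorial in $S$ and mutually quasi-inverse.

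For the third stage, the group $E_{G,\chi}$ is smooth because as a scheme it is isomorphic to $L\times U_+\times U_-^{(q)}$ via $(\ell,u_+,u_-)\mapsto(\ell u_+,\ell^{(q)}u_-)$; in particular $\dim E_{G,\chi}=\dim L+\dim U_++\dim U_-=\dim G=\dim G_k$. Hence $[E_{G,\chi}\backslash G_k]$ is a smooth algebraic quotient stack of dimension~$0$ over~$k$, so the three equivalent stacks are all smooth and algebraic of dimension~$0$. The technical heart of the argument, and the main obstacle I anticipate, is the second step: verifying that the filtered Tannakian reconstruction yields exactly a $P_+$-reduction (rather than a reduction to a smaller parabolic or a non-conjugate one), and that the type-$\chi$ condition in Definition~\ref{0GZipFuncTypeDef} matches the conjugacy class of cocharacters used to define $P_\pm$ and $E_{G,\chi}$. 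This hinges on a careful comparison between graded fiber functors on $\GRep$ and cocharacters of~$G_k$, which parallels and extends the analysis already used in defining $\GZipFunctor^\chi$.
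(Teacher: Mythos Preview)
Your proposal is correct and follows essentially the same three-stage architecture as the paper: Construction~\ref{GZipCons} and Lemmas~\ref{GZipLocStandard}--\ref{StandardGZipMorphisms} carry out your first stage, Construction~\ref{GZipFunctorToGZip} and Theorem~\ref{GZip=GZipFun} your second, and Corollary~\ref{SmoothStack} your third via the same dimension count. The only cosmetic difference is that for the Tannakian step the paper fixes a reference zip functor $\Fz_1$ built directly from~$\chi$ and realizes the torsors as $\UIsom^\otimes(\Fz_{1,S},\Fz)$-sheaves, whereas you phrase it as abstract reconstruction plus associated bundles; the content is the same, and the results from \cite{ZieglerFFF} you cite are exactly what the paper invokes to pin down the parabolic reductions.
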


In particular, it is equivalent to give a $G$-zip functor of type $\chi$ over $S$, or a $G$-zip of type $\chi$ over $S$, or a morphism $S\to [E_{G,\chi}\backslash G_k]$ over~$k$. The equivalences are in fact obtained by explicit constructions (see Subsections \ref{Gzips2} and \ref{FunctorsZips}).

%%%%%%%%%%%%%%%%%%%%%%%%%%%%%%%%%%%%%%%%%%%%%%%

\subsection{Classification}
\label{Classi}

Using the results of~\cite{PWZ1} we can now describe the stack of $G$-zips of type $\chi$ in detail. By Theorem \ref{0IdentifyGZips}, all the following statements hold equivalently for $G$-zip functors of type~$\chi$. By the results of Section~\ref{Fadd} they also hold for symplectic, orthogonal, resp.\ unitary $F$-zips, and so on.

Let $W$ be the Weyl group of $G$, let $I \subset W$ be the subset of simple reflections corresponding to $P_+$, and let $W_I$ be the subgroup of $W$ generated by $I$. 
Let ${}^IW$ be the set of elements $w \in W$ that are of minimal length in their right coset $W_Iw$. We endow ${}^IW$ with a certain partial order $\preceq$ which is somewhat complicated to describe (and in general strictly finer than the Bruhat order; see (\ref{DefPartialOrder}) and Example \ref{SplitExample}). This turns ${}^IW$ into a finite topological space (see Proposition \ref{TopPartial1}), which we can compare with the topological space underlying the algebraic stack $\GZip^{\chi}$ (see Subsection~\ref{Quot1}).

% Let $K$ be an algebraically closed field extension of $k$.

\begin{theorem}[cf.\ Theorem~\ref{TopGZip1}]
\label{0PropGZips1}
The topological space underlying $\GZip^{\chi}$ is naturally homeomorphic to ${}^IW$. 
In particular, there is a natural bijection between the set of isomorphism classes of $G$-zips of type $\chi$ over an algebraically closed field $K$ containing $k$ and the set~${}^IW$.
\end{theorem}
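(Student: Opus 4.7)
The plan is to reduce everything to the quotient stack description and then import the orbit classification from \cite{PWZ1}.

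First I would invoke Theorem \ref{0IdentifyGZips}, which gives a natural equivalence of stacks $\GZip^{\chi} \simeq [E_{G,\chi}\backslash G_k]$. Under this equivalence, the underlying topological space of $\GZip^{\chi}$ is identified with the topological space of $[E_{G,\chi}\backslash G_k]$, which by the standard construction of the topology on a quotient stack is the set of $E_{G,\chi}$-orbits on $G_k$ equipped with the quotient topology coming from $G_k$. Concretely, a subset is closed if and only if its preimage in $G_k$ is closed, so the specialization relation on orbits corresponds to the closure relation: $\orb \preceq \orb'$ iff $\orb \subseteq \overline{\orb'}$.

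Next I would appeal to the results of \cite{PWZ1} applied to the algebraic zip datum associated to $(G,\chi)$ (recalled in Subsection \ref{Gz}). That paper produces a natural bijection between ${}^IW$ and the set of $E_{G,\chi}$-orbits on $G_k$, together with a description of the closure order of orbits in terms of the partial order $\preceq$ on ${}^IW$ defined in (\ref{DefPartialOrder}). Combining this bijection with the identification in the previous paragraph yields a homeomorphism between $|\GZip^{\chi}|$ and the finite topological space $({}^IW,\preceq)$ whose closed sets are the lower sets for $\preceq$, which is exactly the topology described in Proposition \ref{TopPartial1}. One should also check that the bijection is natural in the sense that it is compatible with the equivalence of stacks, which follows from the fact that both sides are constructed functorially from $(G,\chi)$.

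For the second assertion, let $K$ be an algebraically closed field containing $k$. Isomorphism classes of $G$-zips of type $\chi$ over $K$ correspond bijectively to $E_{G,\chi}(K)$-orbits on $G_k(K) = G(K)$ via the equivalence $\GZip^{\chi} \simeq [E_{G,\chi}\backslash G_k]$. Since the finitely many orbits are locally closed subvarieties of $G_k$ defined over $k$ (by \cite{PWZ1}), each is geometrically irreducible over some finite extension of $k$, and the set of $K$-rational orbits is independent of the choice of algebraically closed field $K \supseteq k$, so it is canonically in bijection with the orbit set identified in the first part, namely with ${}^IW$.

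The main obstacle is the base-change step in the last paragraph: one needs to know that the bijection between geometric orbits and ${}^IW$ really is independent of the choice of algebraically closed extension, i.e.\ that each orbit has a $\bar k$-rational point and that no orbit splits further upon enlarging the base field. Both statements are built into the orbit classification of \cite{PWZ1}, where the orbits are shown to be obtained as images of explicit $k$-defined morphisms, so the proof goes through once this input is cited carefully.
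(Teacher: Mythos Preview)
Your proposal is correct and follows essentially the same route as the paper: reduce to the quotient stack via the equivalence $\GZip^\chi\simeq[E_{G,\chi}\backslash G_k]$, then import the orbit parametrization and closure order from \cite{PWZ1}. The only refinement worth noting is that the paper treats the identification of $|[H\backslash X]|$ with the orbit set via a general result (Proposition~\ref{TopSpaceStack}), which a priori yields $\Gamma\backslash(\text{orbits over }\bar k)$ rather than ``orbits on $G_k$''; the splitting hypothesis on $G$ in the introduction is what makes the Galois action trivial and collapses this to~${}^IW$ directly, as in Example~\ref{SplitExample}, so your informal description of the stack topology is justified in this setting.
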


For any $G$-zip $\UI$ of type $\chi$ over a scheme $S$ over $k$ we thus obtain a finite stratification of $S$ by the isomorphism type of~$\UI$. This generalizes the $F$-zip stratification defined in~\cite{moonwed} in the case $G = \GL_{n,\BF_q}$, as well as the Ekedahl-Oort stratification of the moduli space of $g$-dimensional principally polarized abelian varieties in the case $G = \CSp_{2g,\BF_q}$. The partial order $\preceq$ yields information on the closure relations between these strata (see (\ref{GZipStratClos}) and Proposition \ref{Generizing1}). Using a result of Yatsyshyn \cite{Yatsyshyn} we can also deduce a purity result (see Proposition~\ref{Purity2}).
Furthermore, the description of point stabilizers in $E_{G,\chi}$ from \cite{PWZ1} Theorem~8.1 yields information on automorphism groups of $G$-zips; in particular:

\begin{theorem}[cf.\ Proposition~\ref{AutoGZip}]
\label{0PropGZips2}
The automorphism group scheme of the $G$-zip of type $\chi$ over an algebraically closed field $K$ containing $k$ corresponding to $w \in {}^IW$ is an extension of a finite group (see Proposition~\ref{AutoGZip} for its precise description) by a connected unipotent group of dimension $\dim(G/P_+) - \ell(w)$, where $\ell(\ )$ denotes the length function on the Coxeter group~$W$.
\end{theorem}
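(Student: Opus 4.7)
The plan is to transport the question via the equivalence $\GZip^{\chi} \simeq [E_{G,\chi}\backslash G_k]$ of Theorem~\ref{0IdentifyGZips} and then quote the stabilizer description from~\cite{PWZ1}. Under this equivalence, the automorphism group scheme of the $G$-zip corresponding to a $K$-point $g \in G_k(K)$ is the scheme-theoretic stabilizer $\Stab_{E_{G,\chi}}(g)$ under the twisted conjugation action $(p_+,p_-)\cdot g = p_+ g p_-^{-1}$. By the classification of $E_{G,\chi}$-orbits in $G_k$ from~\cite{PWZ1} (already invoked in Theorem~\ref{0PropGZips1}), the orbit attached to $w \in {}^IW$ contains a specific representative $g_w$ (a suitable lift $\dot w$ to the normalizer of a fixed maximal torus), so it suffices to compute $\Stab_{E_{G,\chi}}(g_w)$.

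The key step is then to invoke \cite{PWZ1}, Theorem~8.1, which for each orbit representative $g_w$ gives a precise description of its stabilizer as an extension
\[
1 \longto H_w^{\circ} \longto \Stab_{E_{G,\chi}}(g_w) \longto \pi_0(\Stab_{E_{G,\chi}}(g_w)) \longto 1,
\]
in which the identity component $H_w^{\circ}$ is explicitly built out of intersections of $U_+$ and $U_-^{(q)}$ with conjugates by $g_w$, hence is a smooth connected unipotent group, and the component group is finite (with the precise description inherited from loc.\ cit.\ that will be stated in the body of Proposition~\ref{AutoGZip}). This directly yields the claimed structure as an extension of a finite group by a connected unipotent group.

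It remains to pin down the dimension of $H_w^{\circ}$, which I would do by a dimension count using the $0$-dimensionality of the stack. Since $[E_{G,\chi}\backslash G_k]$ has dimension~$0$ over~$k$ by Corollary~\ref{SmoothStack}, the stabilizer dimension equals $\dim E_{G,\chi} - \dim(E_{G,\chi}\cdot g_w)$, and the orbit dimension is $\dim P_+ + \ell(w)$ by the orbit description in~\cite{PWZ1}. On the other hand, from the definition
\[
E_{G,\chi} = \bigl\{(\ell u_+,\ell^{(q)}u_-)\bigm| \ell\in L,\ u_+\in U_+,\ u_-\in U_-^{(q)}\bigr\}
\]
one gets $\dim E_{G,\chi} = \dim L + \dim U_+ + \dim U_-^{(q)} = \dim P_+ + \dim U_-$, and $\dim U_- = \dim(G/P_+)$. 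Combining these yields
\[
\dim \Stab_{E_{G,\chi}}(g_w) = \dim(G/P_+) - \ell(w),
\]
which, since the component group is finite, is the dimension of the connected unipotent part.

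The main obstacle is the reference step: extracting from \cite{PWZ1} Theorem~8.1 the correct identification of the unipotent radical of the stabilizer and of its component group in terms of $w$. Once the stabilizer theorem of~\cite{PWZ1} is set up for the specific zip datum attached to $(G,\chi)$, the connected-unipotent statement and the component group description follow formally, and the dimension drops out of the general dimension count above without any further computation.
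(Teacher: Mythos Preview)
Your proposal is correct and follows essentially the same route as the paper's proof of Proposition~\ref{AutoGZip}: identify the automorphism group with the $E_{G,\chi}$-stabilizer via Proposition~\ref{GZipStack}/Proposition~\ref{AutoStack1}, invoke \cite{PWZ1} Theorem~8.1 for the semi-direct product structure (connected unipotent part times finite $\Pi$), and read off the dimension from $\dim E_{G,\chi}-\dim o(g_w)$ using the orbit dimension $\dim P_+ + \ell(w)$ from \cite{PWZ1}. Two minor remarks: the paper's chosen representative is $g\dot w$ with $g$ a lift of $y=\bar\varphi^{-1}(x)$ (cf.\ Remark~\ref{StandardGZip}), not just $\dot w$; and your aside that $H_w^\circ$ is \emph{smooth} is not needed and in fact need not hold (see Remark~\ref{AutoGZip1}), though this does not affect the dimension count.
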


%%%%%%%%%%%%%%%%%%%%%%%%%%%%%%%%%%%%%%%%%%%%%%%%%%%%%%

\subsection{Applications}
\label{Appli}

In Section~\ref{EO} we study the de Rham cohomology of a smooth proper Deligne-Mumford stack $\CX \to S$ whose Hodge spectral sequence degenerates and is compatible with arbitrary base change. 
For all $d \geq 0$ we obtain an $F$-zip $\Hline^d_{\rm DR}(\CX/S)$. If $\CX$ is a scheme, the cup product induces for all $d$, $e \geq 0$ a morphism of $F$-zips
\[
\cup\colon \Hline^d_{\rm DR}(\CX/S) \otimes \Hline^e_{\rm DR}(\CX/S) \longto \Hline^{d+e}_{\rm DR}(\CX/S).
\]
If $\CX \to S$ is a smooth proper morphism of schemes with geometrically connected fibers of dimension $n$, the cup product turns $\Hline^n_{\rm DR}(\CX/S)$ into a twisted symplectic or orthogonal $F$-zip, depending on the parity of~$n$.

\medskip
In Subsection~\ref{BT1} we attach an $F$-zip to any truncated Barsotti-Tate group of level $1$ over a scheme $S$ of characteristic $p$. This construction improves the one given in~\cite{moonwed} where $S$ was assumed to be perfect. 

\medskip
In addition, we hope that our results can be applied to the special fibers of arbitrary Shimura varieties, where $G$ is the reduction modulo $p$ of the connected reductive linear algebraic group over~$\BQ$ that gives rise to the Shimura variety. In that case our systematic group theoretical approach should prove especially valuable. For good reductions of Shimura varieties of Hodge type some progress has already been made. C.~Zhang has defined in \cite{zhang} a smooth morphism from the special fiber of Kisin's integral model to the stack of $G$-zips of a certain type $\chi$ yielding a description of Ekedahl-Oort strata for the special fiber. This has been used by D.~Wortmann in \cite{Wort_MuOrd} to prove that the conjectured candidate for the generic Newton stratum is indeed open and dense in the special fiber.

%%%%%%%%%%%%%%%%%%%%%%%%%%%%%%%%%%%%%%%%%%%%%%%%%%%%%%

\subsection{Contents of the paper}
\label{Cotp}

As a preparation we begin by recalling some properties of quotient stacks in Section~\ref{Quot}. 

In Section~\ref{Gzips} we first introduce some general notation used throughout the rest of the article. We mostly work with a not necessarily connected linear algebraic group $\Ghat$ over $\BF_q$ whose identity component $G$ is reductive. Besides a cocharacter $\chi$ of~$G_k$, the basic data also requires the choice of a subgroup $\Theta$ of the group of connected components of the stabilizer of~$\chi$. We then define the general notion of $\Ghat$-zips of type $(\chi,\Theta)$ and prove that they form a smooth algebraic stack of dimension $0$ 
% $\GhatZip^{\chi,\Theta}_k$ 
over~$k$ that is naturally isomorphic to a quotient stack of the form $[E_{\Ghat,\chi,\Theta}\backslash \Ghat_k]$ that was studied in~\cite{PWZ1}.
The remainder of Section~\ref{Gzips} contains an assortment of results on the topological space underlying this stack, on the associated stratification, and on automorphisms.

We use Section~\ref{Prelim} to collect some generalities concerning locally free sheaves, gradings, filtrations, and alternating and symmetric powers. In Section~\ref{FFFAC} we recall some results of the third author on filtered and graded fiber functors on the Tannakian category $\GhRep$.

In Section~\ref{Fzips} we endow the category of $F$-zips over a scheme $S$ with the structure of an exact rigid tensor category. Section~\ref{Functors} then contains the definition of $\Ghat$-zip functors. Here we use the results recalled in Section~\ref{FFFAC} to prove that every $\Ghat$-zip functor over a connected scheme $S$ has a type~$\chi$. With the unique maximal possible choice of $\Theta$ we then establish a natural isomorphism between the stack of $\Ghat$-zip functors of type $\chi$ and the stack of $\Ghat$-zips of type $(\chi,\Theta)$. Consequently the stack of $\Ghat$-zip functors of type $\chi$ is also a smooth algebraic stack of dimension $0$ that is naturally isomorphic to $[E_{\Ghat,\chi,\Theta}\backslash \Ghat_k]$.

In Section~\ref{Fadd} we go through a list of eight classical groups, in each case describing an equivalence between $\Ghat$-zip functors and $F$-zips of a given rank with a certain embellishment such as an alternating or symmetric or hermitian form within the category of $F$-zips.

In the last Section~\ref{EO} we discuss applications to the algebraic de Rham cohomology of certain Deligne-Mumford stacks and to truncated Barsotti-Tate groups of level $1$.

\bigskip

\textsc{Acknowledgment}: We thank Chao Zhang for pointing out a mistake in an earlier version of the proof of Lemma~3.5 and Eike Lau for explaining us Remark~\ref{TruncBTSmooth}. We are grateful to the referee for helpful remarks.

%%%%%%%%%%%%%%%%%%%%%%%%%%%%%%%%%%%%%%%%%%%%%%%%%

%\newpage
%%%%%%%%%%%%%%%%%%%%%%%%%%%%%%%%%%%%%%%%%%%%%%%%%

\section{General properties of quotient stacks}
\label{Quot}

As a preparation we discuss some general properties of algebraic group actions and quotient stacks.

%%%%%%%%%%%%%%%%%%%%%%%%%%%%%%%%%%%%%%%%%%%%%%%%%

\subsection{Closure relation for an algebraic group action}
\label{QuotTop}

First recall that a topological space $Z$ is called $T_0$ (or Kolmogorov) if for any two distinct points, at least one of them possesses a neighborhood that does not contain the other. Abbreviating the closure of a subset by $\overline{(\ )}$, as usual, this is equivalent to saying that for any $z',z\in Z$ we have $z'=z$ if and only if both $z'\in\overline{\{z\}}$ and $z\in\overline{\{z'\}}$. On the other hand, recall that a partial order $\preceq$ on a set $Z$ is a transitive binary relation which is antisymmetric in the sense that $z'=z$ if and only if both $z'\preceq z$ and $z\preceq z'$.
With these observations the following well-known fact is easy to prove:

\begin{proposition}\label{TopPartial1}
For any finite $T_0$ topological space $Z$ the relation $z'\preceq z$ $:\Leftrightarrow$ ${z'\in\overline{\{z\}}}$ is a partial order on~$Z$. Conversely, any partial order on a finite set $Z$ arises in this way from a unique $T_0$ topology on~$Z$.
Moreover, a map between finite $T_0$ spaces is continuous if and only if it preserves the associated partial orders.
\end{proposition}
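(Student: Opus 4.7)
The plan is to verify each of the three assertions in turn, exploiting the fact that finiteness makes the closed sets of any topology entirely determined by the closures of singletons.

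First I would check that $z'\preceq z:\Leftrightarrow z'\in\overline{\{z\}}$ is a partial order on a finite $T_0$ space $Z$. Reflexivity is immediate since $z\in\overline{\{z\}}$. For transitivity, note that $z'\in\overline{\{z\}}$ implies $\overline{\{z'\}}\subseteq\overline{\{z\}}$, so if also $z''\in\overline{\{z'\}}$ then $z''\in\overline{\{z\}}$. Antisymmetry is precisely the $T_0$ condition as already reformulated in the paragraph preceding the proposition.

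For the converse, given a partial order $\preceq$ on a finite set $Z$, I would declare a subset $C\subseteq Z$ to be closed if it is downward closed, i.e., if $z\in C$ and $z'\preceq z$ imply $z'\in C$. Arbitrary intersections and unions of downward closed sets are downward closed, and $\emptyset$ and $Z$ are downward closed, so this is a topology. By construction $\overline{\{z\}}=\{z'\in Z\mid z'\preceq z\}$, so the associated closure relation recovers~$\preceq$, and antisymmetry of $\preceq$ gives the $T_0$ property. For uniqueness, observe that in any topology on the finite set~$Z$, every subset is a finite union of singletons, so its closure is the union of the closures of its points; hence a topology on $Z$ is completely determined by the function $z\mapsto\overline{\{z\}}$, and any $T_0$ topology inducing $\preceq$ must coincide with the one just constructed.

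Finally, for a map $f\colon Z\to Z'$ between finite $T_0$ spaces, if $f$ is continuous and $z_1\preceq z_2$ in $Z$ then $f(z_1)\in f(\overline{\{z_2\}})\subseteq\overline{\{f(z_2)\}}$, so $f(z_1)\preceq f(z_2)$. Conversely, if $f$ preserves the partial orders and $C'\subseteq Z'$ is closed, i.e.\ downward closed, then $f^{-1}(C')$ is downward closed in~$Z$: for $z_1\preceq z_2\in f^{-1}(C')$ we have $f(z_1)\preceq f(z_2)\in C'$, whence $f(z_1)\in C'$. Thus $f^{-1}(C')$ is closed and $f$ is continuous.

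No serious obstacle is anticipated; the only subtlety is making explicit why on a finite space the closure operator is determined by its values on singletons, which is needed for the uniqueness part.
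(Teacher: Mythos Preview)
Your proof is correct. The paper does not actually supply a proof of this proposition; it simply records the reformulation of the $T_0$ axiom in terms of closures of points and then states that ``with these observations the following well-known fact is easy to prove''. Your argument fills in precisely the details the paper omits, along the lines any reader would be expected to supply.
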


Next consider a field $k$ with an algebraic closure $\kbar$ and the associated absolute Galois group $\Gamma := \Aut(\kbar/k)$. Let $X$ be a scheme  of finite type over~$k$, and let $H$ be a linear algebraic group over $k$ which acts on $X$ from the left by a morphism $H \times_k X \to X$. Then every $H(\kbar)$-orbit $\CO\subset X(\kbar)$ is locally closed for the Zariski topology on $X(\kbar)$. Moreover, its closure $\overline{\CO}$ is again $H(\kbar)$-invariant and therefore a union of orbits, and we have $\dim(\overline{\CO}\setminus\CO)<\dim(\CO)$. From this it follows that the set of orbits 
$$\Xi := H(\kbar)\backslash X(\kbar)$$
with the induced quotient topology is a $T_0$ space.

\medskip
Assume now that $\Xi$ is finite. Then by Proposition \ref{TopPartial1} the topology on $\Xi$ corresponds to the partial order $\preceq$ on $\Xi$ defined by $\CO'\preceq\CO$ $:\Leftrightarrow$ $\CO'\subset\overline{\CO}$.
Also, the Galois group $\Gamma$ acts on $\Xi$ preserving the topology and the partial order. Thus the quotient set $\Gamma\backslash\Xi$ again inherits a quotient topology which is $T_0$ and corresponds to a partial order described in the same fashion. The set $\Gamma\backslash\Xi$ is in natural bijection with the set of algebraic $H$-orbits in~$X$, that is, with the set of non-empty $H$-invariant locally closed reduced subschemes that do not possess non-empty $H$-invariant locally closed proper subschemes.

%%%%%%%%%%%%%%%%%%%%%%%%%%%%%%%%%%%%%%%%%%%%%%%%%

\subsection{Quotient stacks}
\label{Quot1}

This article will require a certain familiarity with the notion of stacks. Recall that a stack over a scheme $S$ is a category fibered in groupoids over the category $((\Sch/S))$ of schemes over $S$ which satisfies effective descent with respect to any fpqc morphism. The morphisms of stacks are functors, and so instead of equality of morphisms one often only has isomorphisms of functors. For the technical definition for a stack to be algebraic see Laumon-Moret-Bailly \cite{ChampsAlgebriques}, Definition 4.1. Let us recall only that every scheme can be considered as an algebraic stack, and for every algebraic stack $\CX$ there exists a smooth surjective morphism from a scheme $X\to\CX$. Many concepts and properties of schemes and morphisms of schemes have analogues for algebraic stacks. For example, there exist natural fiber products and pullbacks of algebraic stacks, and many properties of algebraic stacks and of morphisms of algebraic stacks are tested using pullbacks to schemes.

\medskip
Every algebraic stack $\CX$ possesses an underlying topological space $|\CX|$, defined in \cite{ChampsAlgebriques} Section~5. An element of $|\CX|$ is an equivalence class of morphisms $\Spec K\to\CX$ for fields~$K$, where two morphisms $\Spec K_1\to\CX$ and $\Spec K_2\to\CX$ are equivalent if and only if there exists a common field extension $K$ such that the composite morphisms $\Spec K\to\Spec K_i\to\CX$ are isomorphic. The open subsets of $|\CX|$ are the subsets $|\,\CU|$ for all open substacks $\CU\subset\CX$. If $\CX$ is represented by a scheme~$X$, then $|\CX|$ is homeomorphic to the topological space underlying~$X$.

\medskip
Consider now the situation of Subsection \ref{QuotTop}, where $H$ acts from the left hand side on a scheme $X$ over~$k$. The quotient stack $[H\backslash X]$ is then defined as follows. For any scheme $S$ over $k$ the category $[H\backslash X](S)$ has as objects the pairs consisting of a left $H$-torsor $T\to S$ and an $H$-equivariant morphism $f\colon T\to X$ over~$k$. A morphism $(T,f)\to(T',f')$ in $[H\backslash X](S)$ is a morphism $g\colon T\to T'$ of $H$-torsors over $S$ such that $f'\circ g=f$, and composition is defined in the obvious way. With the evident notion of pullback under morphisms $S'\to S$ this turns the whole collection of categories $[H\backslash X](S)$ into a stack over $k$ that is denoted $[H\backslash X]$. This is an algebraic stack by \cite{ChampsAlgebriques} Proposition 10.13.1, and it possesses a natural surjective morphism $X\to [H\backslash X]$.

\medskip
Assume that the set $\Xi$ of orbits over $\bar k$ is finite, so that it and its quotient  $\Gamma\backslash\Xi$ carry the natural topologies described in Subsection \ref{QuotTop}. As a reformulation of \cite{Wd_DimOort} (4.4) we then have:
% (4.2)--(4.4)

\begin{proposition}\label{TopSpaceStack}
% The topological space underlying $[H \backslash X]$ is naturally homeomorphic to the quotient space $\Gamma\backslash\Xi$.
There is a natural homeomorphism $\Gamma\backslash\Xi \cong \bigl|[H \backslash X]\bigr|$. 
\end{proposition}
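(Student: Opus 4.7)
The plan is to construct a natural map
\[
\Phi \colon \Gamma\backslash\Xi \longto \bigl|[H\backslash X]\bigr|
\]
that sends the class of an orbit $H(\kbar)\cdot x \in \Xi$ to the equivalence class of the composition $\Spec\kbar \stackrel{x}{\to} X \to [H\backslash X]$, and then to verify that $\Phi$ is a well-defined bijection that is both continuous and open. Well-definedness is immediate: replacing $x$ by $h\cdot x$ for $h \in H(\kbar)$ yields an isomorphic object of $[H\backslash X](\kbar)$, while replacing $x$ by $\sigma(x)$ for $\sigma \in \Gamma$ merely precomposes with the automorphism $\Spec\sigma \colon \Spec\kbar \iso \Spec\kbar$; in either case the resulting class in $\bigl|[H\backslash X]\bigr|$ is unchanged.

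For surjectivity, fix $\eta \in \bigl|[H\backslash X]\bigr|$ represented by some $\Spec K \to [H\backslash X]$. After base change to $\bar K$ the associated $H$-torsor trivializes, so $\eta$ is equally represented by some $y \in X(\bar K)$. Since $\Xi$ is finite, $X_{\kbar}$ is a finite disjoint union of schematic $H_{\kbar}$-orbits, and $y$ lies in the $\bar K$-points of one such orbit $\CO$; choosing any $\kbar$-point $x \in \CO(\kbar)$ we obtain $y = h\cdot x$ for some $h \in H(\bar K)$, so $x$ and $y$ give isomorphic objects of $[H\backslash X](\bar K)$ and hence represent the same point of $\bigl|[H\backslash X]\bigr|$. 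For injectivity, suppose $x_1, x_2 \in X(\kbar)$ give equivalent morphisms: there exist a field $K$ and embeddings $\iota_i \colon \kbar \into K$ such that the compositions $\Spec K \to \Spec\kbar \to [H\backslash X]$ become isomorphic in $[H\backslash X](K)$. Since $\kbar$ is algebraically closed we may take $K = \kbar$ with $\iota_i \in \Gamma$, and composing with $\iota_1^{-1}$ we may assume $\iota_1 = \id$ and $\iota_2 = \sigma \in \Gamma$. An isomorphism of the resulting trivial torsors compatible with the equivariant maps determined by $x_1$ and $\sigma(x_2)$ is given by some $h \in H(\kbar)$ with $h\cdot x_1 = \sigma(x_2)$, so the orbits of $x_1$ and $x_2$ are Galois-conjugate.

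For the topological part, recall that the smooth surjective projection $\pi \colon X \to [H\backslash X]$ induces a continuous, surjective, and open map $|\pi| \colon |X| \to \bigl|[H\backslash X]\bigr|$, so that the open subsets of $\bigl|[H\backslash X]\bigr|$ correspond bijectively to the $H$-invariant open subschemes of $X$. The composition $X(\kbar) \to |X| \to \bigl|[H\backslash X]\bigr|$ is continuous and factors as $X(\kbar) \onto \Xi \onto \Gamma\backslash\Xi \stackrel{\Phi}{\longto} \bigl|[H\backslash X]\bigr|$; since $\Xi$ and $\Gamma\backslash\Xi$ carry the respective quotient topologies, continuity of $\Phi$ follows. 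Conversely, any $\Gamma$-invariant open subset $V \subset \Xi$ is a finite union of $H(\kbar)$-orbits whose union in $X(\kbar)$ is open and Galois-stable, and as recalled in Subsection~\ref{QuotTop} such a union is the set of $\kbar$-points of an $H$-invariant open subscheme $U \subset X$ defined over $k$; the image $\Phi(V)$ is then the open subset $\bigl|[H\backslash U]\bigr|$ of $\bigl|[H\backslash X]\bigr|$. Hence $\Phi$ is also open, and therefore a homeomorphism.

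The chief obstacle is the topological bookkeeping, namely reconciling the Zariski topology on $X(\kbar)$, the topology of $|X|$, and the quotient topology on $\bigl|[H\backslash X]\bigr|$, together with the descent step realizing a $\Gamma$-stable open union of orbits as an $H$-invariant open subscheme of $X$ defined over~$k$; the bijectivity itself is a formal manipulation of the equivalence relation defining $\bigl|[H\backslash X]\bigr|$.
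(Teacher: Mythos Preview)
Your argument is correct and complete in outline. The one step that is a little compressed is in the injectivity argument, where you write ``Since $\kbar$ is algebraically closed we may take $K = \kbar$ with $\iota_i \in \Gamma$.'' What is really being used here is twofold: first, any two $k$-embeddings $\iota_1,\iota_2\colon \kbar\hookrightarrow K$ have the same image (the algebraic closure of $k$ in $K$), so $\iota_2=\iota_1\circ\sigma$ for some $\sigma\in\Gamma$; second, once one knows that $x_1$ and $\sigma(x_2)$ lie in the same $H(K)$-orbit for some extension $K\supset\kbar$, one still needs to descend this to $\kbar$. The cleanest justification is that the transporter $\{h\in H_{\kbar}\mid h\cdot x_1=\sigma(x_2)\}$ is a closed subscheme of $H_{\kbar}$ of finite type which has a $K$-point, hence is nonempty, hence has a $\kbar$-point. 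Alternatively one may invoke the finiteness of $\Xi$ directly: distinct $H_{\kbar}$-orbits remain disjoint after base change to $K$. Either way this is standard, but worth making explicit.

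As for comparison with the paper: the paper does not prove this proposition at all; it simply records it as a reformulation of \cite{Wd_DimOort}~(4.4). Your direct argument via the quotient map $|X|\to\bigl|[H\backslash X]\bigr|$ and Galois descent of $H$-invariant opens is exactly the kind of proof one would expect to find behind that citation, so there is no meaningful divergence of approach to discuss.
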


% Now assume that $k$ is perfect. 
%For every subset $Y \subseteq \Xi$ we define a field extension $\kappa(Y)$ of $k$ in $\kbar$ by
%\[
%\Gal(\kbar/\kappa(Y)) = \{\gamma \in \Gamma \mid \gamma(Y) = Y\}.
%\]
%We also write $\kappa(\xi)$ instead of $\kappa(\{\xi\})$ for $\xi \in \Xi$.

% \begin{remark}\label{PointStack}
Now consider any algebraic $H$-orbit $Y \subset X$.
% in the above sense. 
Then $Y$ is a locally closed reduced subscheme of~$X$, and so $[H\backslash Y]$ is a locally closed reduced substack of $[H\backslash X]$. Varying $Y$ we thus obtain a stratification 
\UseTheoremCounterForNextEquation
\begin{equation}\label{PointStack1}
[H\backslash X] \ \hbox{``$=$''}\ 
\bigsqcup_{\Gamma\backslash\Xi} \;[H\backslash Y]
\end{equation}
% of $[H\backslash X]$ that is indexed by $\Gamma\backslash\Xi$.
in the sense that for any scheme $S$ and any morphism $S\to[H\backslash X]$ we obtain a disjoint decomposition of $S$ into locally closed subschemes $S\times_{[H\backslash X]}[H\backslash Y]$.

\medskip
Moreover, assume that $k$ is perfect, and consider any point $y\in Y(\kbar)$. Then $Y_{\kbar}$ is again reduced and hence the disjoint union of the reduced $H_\kbar$-orbit $\CO(y)$ of $y$ and a (possibly empty) finite collection of $\Gamma$-conjugates thereof. Being a reduced orbit $\CO(y)$ is smooth over~$\kbar$; hence $Y$ is smooth over $k$, and so $[H\backslash Y]$ is a smooth algebraic stack over $k$ by definition (c.f. \cite{ChampsAlgebriques}, D\'efinition 4.14). Furthermore, as the smooth morphism $X \to [H \backslash X]$ preserves codimension, we have
\UseTheoremCounterForNextEquation
\begin{equation}\label{PointStack2}
\codim([H\backslash Y], [H \backslash X])
\ =\ \codim(Y,X) \ =\ \codim(\CO(y),X_{\kbar}).
\end{equation}
% \end{remark}

% \begin{remark}\label{Groupfunctoriality}
% Let $\varphi\colon H' \to H$ be a homomorphism of smooth affine group schemes over $k$. Then the composition $\varphi \times \id_X\colon H' \times X \to H \times X$ with the $H$-action $H \times X \to X$ on $X$ defines an $H'$-action on $X$. We obtain a morphism
% \[
% [\varphi\backslash\id_X]\colon [H' \backslash X] \to [H \backslash X]
% \]
% of the quotient stacks. If $\varphi$ is surjective, then for every algebraically closed extension $K$ of $k$ the $H'(K)$-orbits in $X(K)$ are the same as the $H(K)$-orbits in $X(K)$ and $[\varphi\backslash\id_X]$ is a universal homeomorphism.
% \end{remark}

Also, the automorphisms of an object of a quotient stack can be described as follows. 
Consider a scheme $S$ over $k$, a point $x \in X(S)$, and let $\bar x \in [H \backslash X](S)$ denote the image of $x$ under the canonical morphism $X \to [H \backslash X]$. 
Denote by $\UAut(\bar x)$ the sheaf of groups on the category on schemes over $S$ that attaches to $S' \to S$ the group of automorphisms of the base change $\bar x_{S'}$ in the category $[H \backslash X](S')$. On the other hand let $\Stab_{H_S}(x)$ denote the closed subgroup scheme of $H_S := H \times_{\!k} S$ whose $S'$-valued points consist of those $h \in H(S')$ which satisfy $h\cdot x_{S'} = x_{S'}$. 

\begin{proposition}\label{AutoStack1}
There is a natural isomorphism $\UAut(\bar x) \cong \Stab_{H_S}(x)$.
\end{proposition}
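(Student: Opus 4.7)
The plan is to unwind the definition of the image of $x$ in $[H\backslash X](S)$, compute the automorphism sheaf directly, and identify the defining condition with the stabilizer condition.

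First I would make explicit the canonical morphism $X\to[H\backslash X]$. By construction it sends a point $x\in X(S)$ to the object $\bar x=(T_0,\mu_x)$, where $T_0=H_S$ is the trivial left $H$-torsor over $S$ and $\mu_x\colon H_S\to X$ is the $H$-equivariant morphism $(h,s)\mapsto h\cdot x(s)$. Base change to any $S'\to S$ replaces this by $(H_{S'},\mu_{x_{S'}})$.

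Next I would describe the automorphisms of the trivial left $H$-torsor. Any automorphism $g$ of $H_{S'}$ as a left $H$-torsor commutes with left multiplication and is therefore determined by its value at the identity section, giving the standard natural isomorphism $\Aut_{H}(H_{S'})\cong H(S')$, where $\phi\in H(S')$ corresponds to $g_\phi\colon(h,s')\mapsto(h\phi(s'),s')$, i.e.\ right translation. This identification is manifestly natural in~$S'$.

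Now I would impose the additional condition that $g_\phi$ be an automorphism of the pair $(H_{S'},\mu_{x_{S'}})$, i.e.\ that $\mu_{x_{S'}}\circ g_\phi=\mu_{x_{S'}}$. Evaluating at $h=1$ this becomes $\phi(s')\cdot x_{S'}(s')=x_{S'}(s')$, which is precisely the condition $\phi\in\Stab_{H_{S'}}(x_{S'})(S')=\Stab_{H_S}(x)(S')$. Conversely, for any such $\phi$ the $H$-equivariance of $\mu_{x_{S'}}$ combined with the stabilizer condition forces $\mu_{x_{S'}}\circ g_\phi=\mu_{x_{S'}}$ on all of $H_{S'}$. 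Thus the bijection $\Aut_{H}(H_{S'})\cong H(S')$ restricts to a bijection $\UAut(\bar x)(S')\cong\Stab_{H_S}(x)(S')$, and this is functorial in~$S'$ because both sides are defined by base change.

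There is essentially no obstacle here, as the argument is a direct computation once the object $\bar x$ is written down explicitly; the only care needed is to check that one is using left torsors and left actions consistently, and that the isomorphism $\Aut_H(H_{S'})\cong H(S')$ given by right translation is the one compatible with the stabilizer condition expressed via the left action on~$X$.
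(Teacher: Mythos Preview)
Your argument is correct and follows essentially the same route as the paper: identify $\bar x$ with the trivial torsor together with the orbit map $h\mapsto hx$, identify automorphisms of the trivial left $H$-torsor with right translations by elements of $H(S')$, and then observe that compatibility with the orbit map is exactly the stabilizer condition. The only difference is cosmetic---you spell out the reduction to $h=1$ and the converse via equivariance, while the paper leaves this implicit.
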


\begin{proof}
In the construction of $[H\backslash X]$, the point $\bar x\in [H \backslash X](S)$ is represented by the trivial $H$-torsor $H_S\to S$ together with the morphism $H_S\to X$, $h\mapsto hx$. The automorphisms of the trivial left $H$-torsor $H_{S'}\to S'$ are precisely the morphisms $h\mapsto hg$ for all sections $g\in H(S')$. Thus $\UAut(\bar x)(S') = \Aut_{[H\backslash X](S')}(\bar x_{S'})$ is the group of automorphisms $h\mapsto hg$ of $H_{S'}\to S'$ with $g\in H(S')$, such that the two morphisms $H_{S'}\to X$ given by $h\mapsto hx$ and $h\mapsto hg\mapsto hgx$ coincide. But these conditions are equivalent to $g\in \Stab_{H_S}(x)(S')$.
\end{proof}

% In particular we see that $\UAut(\bar x)$ is always an affine group scheme of finite type, in other words the diagonal of the algebraic stack $[H \backslash X]$ is an affine morphism of finite type.

%%%%%%%%%%%%%%%%%%%%%%%%%%%%%%%%%%%%%%%%%%%%%%%%%

%\newpage
%%%%%%%%%%%%%%%%%%%%%%%%%%%%%%%%%%%%%%%%%%%%%%%%%

\section{$\Ghat$-zips}
\label{Gzips}

%%%%%%%%%%%%%%%%%%%%%%%%%%%%%%%%%%%%%%%%%%%%%%%%%

\subsection{General notation}
\label{notation1}

Let $\BF_q$ be a finite field of order~$q$, and let $k$ be a finite overfield of~$\BF_q$. By a linear algebraic group over $k$ we mean a reduced affine group scheme of finite type over~$k$. We do not generally assume it to be connected.
Throughout we denote a linear algebraic group over $k$ by~$\hat H$, its identity component by~$H$, and the finite \'etale group scheme of connected components by $\pi_0(\Hhat) := \Hhat/H$; and similarly for other letters of the alphabet. Note that the unipotent radical $R_uH$ of $H$ is a normal subgroup of~$\Hhat$. Any homomorphism of algebraic groups $\hat\phi\colon\Ghat\to\Hhat$ restricts to a homomorphism $\phi\colon G\to H$.

Let $S$ be a scheme over~$k$. By an $\Hhat$-torsor $I$ over $S$ we will mean a right $\Hhat$-torsor over $S$ for the fpqc-topology, unless mentioned otherwise. In other words $I$ is a scheme over $S$ together with a right action 
% $I \times_{S} (S \times_{\!k} H) \to I$
$I \times_{\!k} \Hhat \to I$ written $(i,h)\mapsto ih$, 
such that the morphism $I \times_{\!k} \Hhat \to I \times_S I$, $(i,h) \mapsto (i,ih)$ is an isomorphism and there exists an fpqc-covering $S' \to S$ such that $I(S') \ne \emptyset$. Any section in $I(S')$ then induces an isomorphism $\Hhat \times_{\!k} S' \stackrel{\sim}{\longto} I \times_S S'$ over~$S'$. By faithfully flat descent for $S' \to S$ we can therefore deduce that every $\Hhat$-torsor over $S$ is affine and faithfully flat over~$S$. Moreover, since $k$ is perfect, the reduced group scheme $\Hhat$ is automatically smooth, and hence $I$ is smooth over~$S$. Thus by \cite{EGAIV_4} (17.16.3) there already exists a surjective \'etale morphism $S' \to S$ such that $I(S') \ne \emptyset$.

Any scheme $S$ over $k$ possesses a natural $q^{\rm th}$ power Frobenius morphism $S \to S$, which is the identity on the underlying topological space and the map $x\mapsto x^q$ on the structure sheaf. The pullback of a scheme or a sheaf or a morphism over $S$ under this Frobenius morphism is denoted by $(\ )^{(q)}$. For example, the pullback of a linear algebraic group $\Hhat$ over $k$ is a linear algebraic group $\Hhat^{(q)}$ over~$k$, and the pullback of an $\Hhat$-torsor $I$ over $S$ is an $\Hhat^{(q)}$-torsor $I^{(q)}$ over $S$.

%%%%%%%%%%%%%%%%%%%%%%%%%%%%%%%%%%%%%%%%%%%%%%%%%

\subsection{The basic data}
\label{notation2}

Let $\Ghat$ be a linear algebraic group over $\BF_q$ such that $G$ is reductive, and let $\Ghat_k$ denote its base extension to~$k$. Let $\chi\colon\BG_{m,k}\to G_k$ be a cocharacter over~$k$, and let $L$ denote its centralizer in~$G_k$. There exist unique opposite parabolic subgroups $P_\pm = L\ltimes U_\pm \subset G_k$ with common Levi component $L$ and unipotent radicals~$U_\pm$, such that $\Lie U_+$ is the sum of the weight spaces of weights $>0$, and $\Lie U_-$ is the sum of the weight spaces of weights $<0$ in $\Lie G_k$ under $\Ad\circ\chi$. Note that the groups $L$ and $P_\pm$ and $U_\pm$ are all connected.

By definition we have $L = \Cent_{\Ghat_k}(\chi) \cap G_k$, and since $L$ is connected, we have a canonical inclusion $\pi_0(\Cent_{\Ghat_k}(\chi)) = \Cent_{\Ghat_k}(\chi)/L \into \pi_0(\Ghat_k)$. Let $\Theta$ be a subgroup scheme of $\pi_0(\Cent_{\Ghat_k}(\chi))$, and let $\Lhat$ denote its inverse image in $\Cent_{\Ghat_k}(\chi)$. Then $L$ is the identity component of $\Lhat$, and $\pi_0(\Lhat)=\Theta\subset\pi_0(\Ghat_k)$. Also, since $\chi$ is centralized by~$\Lhat$, and the subgroups $U_\pm$ depend only on~$\chi$, these subgroups are normalized by~$\Lhat$. Thus $\Phat_\pm := \Lhat\ltimes U_\pm$ are algebraic subgroups of $\Ghat_k$ with identity components $P_\pm$ and $\pi_0(\Phat_\pm)\cong\pi_0(\Lhat)=\Theta$.
% Note that any pair of algebraic subgroups $\Phat_\pm$ of $\Ghat$ whose connected components are $P_\pm$ and such that $\Phat_+/U_+ = \Phat_-/U_-$ is of this form. [Note by Richard: I doubt that this is true.]

This data will remain fixed throughout the article.
%All these groups depend only on the basic data
%$$(\Ghat,\chi,\Theta).$$

\medskip
For schemes $S$ over $k$ we are interested in (right) torsors over $S$ with respect to the above algebraic groups. Consider any $\Ghat_k$-torsor $I$ over~$S$. 
By a $\Phat_\pm$-torsor $I_\pm\subset I$ we mean a subscheme which is a $\Phat_\pm$-torsor with respect to the induced action of~$\Phat_\pm$. For any $\Phat_\pm$-torsor $I_\pm$ over~$S$, the quotient $I_\pm/U_\pm$ is a $\Phat_\pm/U_\pm$-torsor over~$S$, which we can view as an $\Lhat$-torsor under the canonical isomorphism $\Lhat \stackrel{\sim}{\to} \Phat_\pm/U_\pm$. 

On the other hand, the definition of $\Ghat_k$ as a base extension from $\BF_q$ induces a natural isomorphism $\Ghat_k^{(q)} \cong \Ghat_k$. Via this isomorphism we can consider $\chi^{(q)}$ again as a cocharacter of~$G_k$, with associated subgroups $\Phat_\pm^{(q)} = \Lhat^{(q)}\ltimes U_\pm^{(q)}$. 
% centralizer is $L^{(q)}$ and the corresponding opposite parabolic subgroups are $P_\pm^{(q)}$.
Likewise, the $\Ghat_k^{(q)}$-torsor $I^{(q)}$ becomes a $\Ghat_k$-torsor in a natural way.
% Note that a $\Ghat_k$-torsor $I$ over $S$ is the same as a $\Ghat$-torsor over $S$ viewed as a scheme over~$\BF_q$. Its pullback $I^{(q)}$ is therefore again a $\Ghat$-torsor and thus a $\Ghat_k$-torsor over~$S$. 
Moreover, the pullback of a $\Phat_\pm$-torsor $I_\pm\subset I$ is a $\Phat^{(q)}_\pm$-torsor $I_\pm^{(q)} \subset I^{(q)}$.

%%%%%%%%%%%%%%%%%%%%%%%%%%%%%%%%%%%%%%%%%%%%%%%%%

\subsection{The stack of $\Ghat$-zips}\label{Gzips1}

\begin{definition}\label{GZipDef}
Let $S$ be a scheme over~$k$.
\begin{itemize}
\item[(a)] A \emph{$\Ghat$-zip of type $(\chi,\Theta)$ over $S$} is a tuple $\UI = (I,I_+,I_-,\iota)$ consisting of a (right) $\Ghat_k$-torsor $I$ over~$S$, a $\Phat_+$-torsor $I_+\subset I$, a $\Phat^{(q)}_-$-torsor $I_-\subset I$, and an isomorphism of $\Lhat^{(q)}$-torsors $\iota\colon I^{(q)}_+/U^{(q)}_+ \stackrel{\sim}{\longto} I_-/U^{(q)}_-$.
\item[(b)] A \emph{morphism} $(I,I_+,I_-,\iota) \to (I',I_+',I_-',\iota')$ of $\Ghat$-zips of type $(\chi,\Theta)$ over $S$ consists of equivariant morphisms $I\to I'$ and $I_\pm\to I_\pm'$ that are compatible with the inclusions and the isomorphisms $\iota$ and~$\iota'$.
\item[(c)] The resulting \emph{category of $\Ghat$-zips of type $(\chi,\Theta)$ over $S$} is denoted $\GhatZip_k^{\chi,\Theta}(S)$. 
\end{itemize}
If $\Ghat$ is connected, we necessarily have $\Theta = 1$ and drop it from the notation, speaking simply of $\Ghat$-zips of type $\chi$ over $S$ and denoting their category by $\GhatZip_k^{\chi}(S)$.

With the evident notion of pullback the $\GhatZip_k^{\chi,\Theta}(S)$ form a fibered category over the category $(({\Sch}/k))$ of schemes over~$k$, which we denote $\GhatZip^{\chi,\Theta}_k$. 
\end{definition}

\begin{proposition}\label{GZipsStack}
$\GhatZip^{\chi,\Theta}_k$ is a stack.
\end{proposition}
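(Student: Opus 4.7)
The plan is to verify the two defining axioms of a stack: that $\Isom$-presheaves are fpqc sheaves (the prestack property), and that every fpqc descent datum for objects is effective.

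For the prestack property, a morphism $(I,I_+,I_-,\iota)\to(I',I'_+,I'_-,\iota')$ amounts to a $\Ghat_k$-equivariant morphism $f\colon I\to I'$ subject to the three local conditions $f(I_\pm)\subseteq I'_\pm$ and the compatibility with $\iota$, $\iota'$ after passing to the quotients $I_+^{(q)}/U_+^{(q)}$ and $I_-/U_-^{(q)}$. Morphisms between torsors under an affine group scheme form fpqc sheaves on the base, because by Subsection~\ref{notation1} the torsors in question are themselves affine and faithfully flat over $S$, so any fpqc-locally given equivariant morphism glues uniquely. The three extra conditions can each be tested fpqc-locally, so the full $\Isom$-presheaf is an fpqc sheaf.

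For effective descent, suppose $\{S_i\to S\}$ is an fpqc covering and we are given objects $\UI_i=(I_i,I_{+,i},I_{-,i},\iota_i)\in\GhatZip^{\chi,\Theta}_k(S_i)$ together with descent data relative to this covering. The plan is to descend the four pieces of the datum in turn. Since each $I_i\to S_i$ is affine by Subsection~\ref{notation1}, fpqc descent for affine morphisms with a compatible $\Ghat_k$-action yields a $\Ghat_k$-equivariant affine morphism $I\to S$; the existence of fpqc-local sections is inherited from the $S_i$, so $I$ is a $\Ghat_k$-torsor over $S$. Because the inclusions $\Phat_+\hookrightarrow\Ghat_k$ and $\Phat_-^{(q)}\hookrightarrow\Ghat_k$ are closed immersions, each $I_{\pm,i}\subset I_i$ is a closed subscheme, and the restricted descent data yields closed subschemes $I_\pm\subset I$ whose torsor structure under $\Phat_+$ respectively $\Phat_-^{(q)}$ may be checked fpqc-locally after pullback to the $S_i$. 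Finally, the isomorphisms $\iota_i$ are by hypothesis compatible with the descent data on $I_+^{(q)}/U_+^{(q)}$ and $I_-/U_-^{(q)}$, so by the prestack property already proved they assemble into a single isomorphism $\iota\colon I_+^{(q)}/U_+^{(q)}\stackrel{\sim}{\to} I_-/U_-^{(q)}$ over $S$.

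The only real obstacle is the careful bookkeeping needed to check that the descended pieces really assemble into the tuple demanded by Definition~\ref{GZipDef}; all substantive input reduces to standard fpqc descent for affine schemes, together with the observation from Subsection~\ref{notation1} that every torsor appearing here is automatically affine (and in fact smooth) over its base.
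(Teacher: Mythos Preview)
Your proof is correct and follows essentially the same approach as the paper's: both reduce to fpqc descent for affine schemes via the observation (Subsection~\ref{notation1}) that the torsors $I$ and $I_\pm$ are affine over~$S$. The paper's proof is terser---it notes in one line that morphisms of $\Ghat$-zips are automatically isomorphisms (so the fibered category is a groupoid) and then invokes affineness for effective descent---whereas you spell out the prestack condition and the descent of each piece of the tuple separately, but the substance is identical.
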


\begin{proof}
Any morphism of $\Ghat$-zips is an isomorphism; hence $\GhatZip^{\chi,\Theta}_k$ is a category fibered in groupoids. As $\Ghat_k$ and $\Phat_\pm$ are affine over~$k$, the torsors $I$ and $I_\pm$ are affine over~$S$, and so the data in a $\Ghat$-zip satisfy effective descent with respect to any fpqc morphism $S'\to S$.
\end{proof}

\begin{remark}\label{WlogFinite}
For any finite field extension $k'/k$ the given data $\chi,\Theta,L,P_\pm,\ldots$ induces corresponding data $\chi_{k'},\Theta_{k'},\ldots$ over $k'$ by base change. The definition of $\Ghat$-zips then immediately implies that $\GhatZip^{\chi_{k'},\Theta_{k'}}_{k'}$ is just the pullback of $\GhatZip^{\chi,\Theta}_k$ under $\Spec k'\to\Spec k$.
% $\GhatZip^{\chi,\Theta}_k \otimes_k K \cong \GhatZip^{\chi_{k'},\Theta_{k'}}_{k'}$
One can use this to deduce properties of $\GhatZip^{\chi,\Theta}_k$ from the corresponding properties of $\GhatZip^{\chi_{k'},\Theta_{k'}}_{k'}$. In particular, one can apply this to a finite extension $k'/k$ for which $G_{k'}$ splits and $\pi_0(\Ghat_{k'})$ is a constant group scheme. Thus over $k'$ the added complexity induced by the Galois action in Subsections \ref{TopUnderZip} and \ref{StratZip} disappears.
% As $G$ is quasi-split, there exists a cocharacter $\chi'\colon \BG_{m,k'}\to G_{k'}$ in this conjugacy class defined over $k'$. 
%
% Let $\chi\colon\BG_{m,k}\to G$ and $\chi'\colon\BG_{m,k}\to G$ be cocharacters such that there exists $g \in \Ghat(k)$ with $\chi' = {\rm int}(g) \circ \chi$. Then conjugation with $g$ yields an isomorphism of stacks $\GhatZip^{\chi,\Theta}_k \iso \GhatZip^{\chi',{\rm int}(g)(\Theta)}_k$.
% {\bf RICHARD SAGT: Dies fand ich zu selbstverstndlich, als dass man es sagen msste, zumal es nicht verwendet wird.}
\end{remark}

%%%%%%%%%%%%%%%%%%%%%%%%%%%%%%%%%%%%%%%%%%%%%%%%%

\subsection{Realization as a quotient stack}
\label{Gzips2}

\begin{construction}\label{GZipCons}
Let $S$ be a scheme over~$k$. To any section $g\in \Ghat(S)$ we associate a $\Ghat$-zip of type ${(\chi,\Theta)}$ over~$S$, as follows. Let $I_g\defeq S\times_{\!k}\Ghat_k$ and $I_{g,+}\defeq S\times_{\!k}\Phat_+\subset I_g$ be the trivial torsors. Then $I_g^{(q)} \cong S\times_{\!k}\Ghat_k = I_g$ canonically, and we define $I_{g,-}\subset I_g$ as the image of $S\times_{\!k} \Phat^{(q)}_- \subset S\times_{\!k} \Ghat_-$ under left multiplication by~$g$.
% , in other words we set $I_{g,-} \defeq g(S\times_{\!k}\Phat^{(q)}_-)\subset I_g^{(q)} = I_g$. 
Then left multiplication by $g$ induces an isomorphism of $\Lhat^{(q)}$-torsors
$$\iota_g\colon I^{(q)}_{g,+}/U^{(q)}_+ = S\times_{\!k}\Phat^{(q)}_+/U^{(q)}_+ \cong S \times_{\!k} \Phat^{(q)}_-/U^{(q)}_- \stackrel{\sim}{\longto} g(S\times_{\!k} \Phat^{(q)}_-)/U^{(q)}_- = I_{g,-}/U^{(q)}_-.$$
We thus obtain a $\Ghat$-zip of type ${(\chi,\Theta)}$ over~$S$, which we denote by 
$$\UI_g \defeq (I_g,I_{g,+},I_{g,-},\iota_g).$$
\end{construction}

\begin{lemma}\label{GZipLocStandard}
Every $\Ghat$-zip of type ${(\chi,\Theta)}$ is \'etale locally isomorphic to one of the form~$\UI_g$.
\end{lemma}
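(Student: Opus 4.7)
The plan is to exploit the fact that all the structure groups involved ($\Phat_+$, $\Phat^{(q)}_-$, $U^{(q)}_-$, $\Ghat_k$) are smooth affine over~$k$, so their torsors are étale locally trivial by the discussion in Subsection~\ref{notation1}. I will build up a trivialization in stages: first pick a section of~$I_+$, then carefully select a matching section of~$I_-$ that is compatible with the isomorphism~$\iota$, and finally read off the group element~$g$.

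First, since $\Phat_+$ is smooth, there exists an étale cover $S' \to S$ and a section $i_+ \in I_+(S')$. Viewing $i_+$ as a section of~$I$, the resulting trivialization $S' \times_k \Ghat_k \stackrel{\sim}{\to} I|_{S'}$, $(s,h) \mapsto i_+(s)\cdot h$, carries $S' \times_k \Phat_+$ isomorphically onto $I_+|_{S'}$, matching the setup of Construction~\ref{GZipCons} precisely for the $+$-datum.

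Next I handle $I_-$ and $\iota$ simultaneously. Let $\bar\imath_+ \in (I^{(q)}_+/U^{(q)}_+)(S')$ denote the class of the Frobenius pullback of~$i_+$, and set $\bar\imath_- := \iota(\bar\imath_+) \in (I_-/U^{(q)}_-)(S')$. Since $U^{(q)}_-$ is smooth, the quotient map $I_- \to I_-/U^{(q)}_-$ is a $U^{(q)}_-$-torsor, so it admits a section étale locally; after passing to a further étale cover $S'' \to S'$ we lift $\bar\imath_-$ to a section $i_- \in I_-(S'')$. Viewed as a section of~$I$, the element $i_-$ is of the form $i_+ \cdot g$ for a unique $g \in \Ghat(S'')$. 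Under the trivialization of~$I$ given by~$i_+$, the subtorsor $I_-|_{S''} = i_-\Phat^{(q)}_-$ then coincides with the image of $S'' \times_k \Phat^{(q)}_-$ under left multiplication by~$g$, which is exactly $I_{g,-}$ of Construction~\ref{GZipCons}, while $I_+|_{S''}$ agrees with $I_{g,+}$ as observed above.

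The main remaining point, and the main subtlety of the argument, is to verify that $\iota$ agrees with~$\iota_g$ as isomorphisms of $\Lhat^{(q)}$-torsors $I^{(q)}_+/U^{(q)}_+ \to I_-/U^{(q)}_-$ over~$S''$. Such a morphism of torsors is determined by its value on any single section, so it suffices to evaluate both at~$\bar\imath_+$. By construction we have $\iota(\bar\imath_+) = \bar\imath_-$. On the other hand, unwinding Construction~\ref{GZipCons} shows that $\iota_g(\bar\imath_+) = i_+ \cdot g \bmod U^{(q)}_- = i_- \bmod U^{(q)}_- = \bar\imath_-$ as well, so $\iota = \iota_g$ and hence $\UI|_{S''} \cong \UI_g$ as required. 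This compatibility is where care is needed: trivializing $I_+$ and $I_-$ by independently chosen sections would in general leave $\iota$ as an arbitrary nontrivial twist in $\Lhat^{(q)}(S'')$, so it is essential that $i_-$ be chosen as an étale local lift of $\iota(\bar\imath_+)$ through the $U^{(q)}_-$-torsor $I_- \to I_-/U^{(q)}_-$, rather than arbitrarily.
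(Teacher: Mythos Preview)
Your proof is correct and follows essentially the same approach as the paper: trivialize $I_+$ by a section $i_+$, produce a section $i_-$ of $I_-$ whose image in $I_-/U^{(q)}_-$ equals $\iota(i_+^{(q)}U^{(q)}_+)$, and read off $g$ from $i_- = i_+ g$. The only cosmetic difference is that the paper first chooses an arbitrary section of $I_-$ and then corrects it by an element $\ell\in\Lhat^{(q)}(S)$ to achieve this compatibility, whereas you obtain $i_-$ directly by lifting $\iota(\bar\imath_+)$ through the smooth $U^{(q)}_-$-torsor $I_-\to I_-/U^{(q)}_-$ at the cost of a second \'etale refinement.
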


\begin{proof}
Let $\UI = (I,I_+,I_-,\iota)$ be a $\Ghat$-zip of type ${(\chi,\Theta)}$ over~$S$. By Subsection~\ref{notation1}, after replacing $S$ by an \'etale covering there exist sections $i_\pm\in I_\pm(S)$. These sections induce two sections $i_-U^{(q)}_-$ and $\iota(i^{(q)}_+U^{(q)}_+)$ in $(I_-/U^{(q)}_-)(S)$; hence there exists a unique section $\ell\in \Lhat^{(q)}(S)$ such that $i_-U^{(q)}_-\cdot\ell = \iota(i^{(q)}_+U^{(q)}_+)$. After replacing $i_-$ by $i_-\ell$ we may therefore assume that the induced sections of $I_-/U^{(q)}_-$ coincide. Then $i_-$ and $i_+$ induce two sections of~$I$; hence there exists a unique $g\in \Ghat(S)$ such that $i_- = i_+g$. 

We claim that $\UI \cong \UI_g$. Indeed, using $i_+$ to trivialize $I_+$ and $I$, we may without loss of generality assume that $I_+ = I_{g,+} \subset I = I_g$ and that $i_+$ is the identity section. Then $i_-=i_+g$ corresponds to the section $g$ of~$I_g$. This implies that $I_- = i_-\Phat^{(q)}_- = I_{g,-}$. Furthermore, since the $\Lhat^{(q)}$-equivariant isomorphism $\iota\colon I^{(q)}_+/U^{(q)}_+ \stackrel{\sim}{\longto} I_-/U^{(q)}_-$ sends the section $i^{(q)}_+U^{(q)}_+ = U^{(q)}_+$ to the section $i_-U^{(q)}_- = g(S\times_{\!k}U^{(q)}_-)$, it must coincide with $\iota_g$. Thus we find that $\UI=\UI_g$ and are done.
\end{proof}

\begin{definition}\label{ZipDatumDef}
The \emph{algebraic zip datum} associated to $\Ghat$ and $(\chi,\Theta)$ is the tuple $\CZ_{\Ghat,\chi,\Theta} \defeq (\Ghat_k,\Phat_+,\Phat_-^{(q)},\hat\varphi)$ where $\hat\varphi$ is the composite isogeny
\vskip-7pt
$$\xymatrix@C-10pt{
\Phat_+/U_+ \ar@{}[r]|-\cong & 
\Lhat\ \ar[rr]^{\Frob_q} && 
\ \Lhat^{(q)} \ar@{}[r]|-\cong & 
\Phat_-^{(q)}/U_-^{(q)}\rlap{.} \\}$$
The associated \emph{zip group} is the linear algebraic group over $k$ 
\UseTheoremCounterForNextEquation
\begin{equation}\label{ZipGroupDef}
E_{\Ghat,\chi,\Theta} \defeq 
\set{(\ell u_+,\ell^{(q)}u_-)}{\ell\in \Lhat,\ u_+\in U_+, u_-\in U_-^{(q)}} 
\ \subset\ \Phat_+ \times_{\!k} \Phat_-^{(q)}.
\end{equation}
It acts from the left hand side on $\Ghat_k$ by the formula 
\UseTheoremCounterForNextEquation
\begin{equation}\label{ZipGroupActionDef}
(p_+,p_-)\cdot g \ \defeq\ p_+ g p_-^{-1}.
\end{equation}
If $\Ghat$ is connected and thus $\Theta = 1$, we abbreviate $\CZ_{\Ghat,\chi} := \CZ_{\Ghat,\chi,\Theta}$ and $E_{\Ghat,\chi} := E_{\Ghat,\chi,\Theta}$.
\end{definition}

\begin{remark}\label{ItsZipData}
In \cite{PWZ1}, Definition~10.1, we defined algebraic zip data over algebraically closed fields, whereas here $k$ is finite. But the natural base extension of the above tuple $\CZ_{\Ghat,\chi,\Theta}$ to an algebraic closure $\bar k$ of $k$ is an algebraic zip datum in the sense of [loc.~cit.], and the base extension of the above zip group $E_{\Ghat,\chi,\Theta}$ and its action on $\Ghat$ are those of [loc.~cit.], so all the results there have direct consequences here. For example, by \cite{PWZ1}, Proposition~7.3, the zip datum over~$\bar k$ is orbitally finite, and so the group $E_{\Ghat,\chi,\Theta}$ acts with only finitely many orbits on~$\Ghat_k$.
% , i.e., the group $E_{\Ghat,\chi,\Theta}(K)$ acts with finitely many orbits on the set $\Ghat(K)$ for one, equivalently for any, algebraically closed extension $K$ of $k$.
\end{remark}

\begin{lemma}\label{StandardGZipMorphisms}
For any two sections $g, g'\in \Ghat(S)$ there is a natural bijection between the transporter
$$\Transp_{E_{\Ghat,\chi,\Theta}(S)}(g,g')
\ :=\ \bigl\{(p_+,p_-)\in E_{\Ghat,\chi,\Theta}(S) \bigm|
p_+ g p_-^{-1} = g' \bigr\}$$
and the set of morphisms of $\Ghat$-zips $\UI_g\to \UI_{g'}$ over~$S$, under which $(p_+,p_-)$ corresponds to the morphisms $I_g \to I_{g'}$ and $I_{g,+}\to I_{g',+}$ given by left multiplication with $p_+$ and the morphism $I_{g,-}\to I_{g',-}$ given by left multiplication with $g'p_-g^{-1}$.
% $p_+^{(q)} = g'p_-g^{-1}$.
% YES, really p_- and not p_+!!!
\end{lemma}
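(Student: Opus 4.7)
The plan is to translate a morphism $\UI_g \to \UI_{g'}$ of $\Ghat$-zips into the data of a pair $(p_+, p_-) \in \Phat_+(S) \times \Phat_-^{(q)}(S)$ and then to identify the constraints imposed by the axioms of such a morphism with membership in $E_{\Ghat,\chi,\Theta}$ together with the transporter equation.

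I would first exploit the triviality of all three torsors. Since $I_g = S \times_k \Ghat_k$ is the trivial $\Ghat_k$-torsor, any right-$\Ghat_k$-equivariant morphism $I_g \to I_{g'}$ over $S$ is fiberwise left multiplication by a unique section $p_+ \in \Ghat(S)$. Requiring this to restrict to a $\Phat_+$-equivariant map of subtorsors $I_{g,+} \to I_{g',+}$ forces $p_+ \in \Phat_+(S)$. Using the identification $I_{g,-} = g \cdot (S \times_k \Phat_-^{(q)}) \subset I_g$ and analogously for $I_{g',-}$, the restriction of left multiplication by $p_+$ sends $I_{g,-}$ into $I_{g',-}$ if and only if there exists a unique $p_- \in \Phat_-^{(q)}(S)$ with $p_+ g = g' p_-$, and in that case this restriction agrees with left multiplication by $g' p_- g^{-1}$. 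The relation $p_+ g = g' p_-$ is exactly the transporter equation $p_+ g p_-^{-1} = g'$.

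The main technical step, and the one I expect to be the real obstacle, is to show that compatibility with the isomorphisms $\iota_g$ and $\iota_{g'}$ is equivalent to the condition $(p_+, p_-) \in E_{\Ghat,\chi,\Theta}(S)$. Recall that $\iota_g$ is built by canonically identifying both $\Phat_+^{(q)}/U_+^{(q)}$ and $\Phat_-^{(q)}/U_-^{(q)}$ with $\Lhat^{(q)}$ and then translating by $g$. Via the resulting trivializations $I_{g,+}^{(q)}/U_+^{(q)} \cong S \times_k \Lhat^{(q)}$ and $I_{g,-}/U_-^{(q)} \cong S \times_k \Lhat^{(q)}$ (the latter using the section $g$), left multiplication by $p_+^{(q)}$ descends to multiplication by its image in $\Lhat^{(q)}(S)$, which equals $\ell^{(q)}$ when we write $p_+ = \ell u_+$ with $\ell \in \Lhat(S)$ and $u_+ \in U_+(S)$; while left multiplication by $g' p_- g^{-1}$ descends to multiplication by the $\Lhat^{(q)}$-component $\ell'$ of $p_-$, where $p_- = \ell' u_-$ with $\ell' \in \Lhat^{(q)}(S)$ and $u_- \in U_-^{(q)}(S)$. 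Chasing the identity section through the resulting square then reduces commutativity to the single equation $\ell^{(q)} = \ell'$, which by (\ref{ZipGroupDef}) is precisely the defining condition for $(p_+, p_-)$ to lie in $E_{\Ghat,\chi,\Theta}(S)$.

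Conversely, any $(p_+, p_-) \in \Transp_{E_{\Ghat,\chi,\Theta}(S)}(g, g')$ produces via the formulas above an equivariant morphism $I_g \to I_{g'}$ whose restrictions yield morphisms $I_{g,\pm} \to I_{g',\pm}$ compatible with the inclusions and with $\iota_g$, $\iota_{g'}$, i.e.\ a morphism $\UI_g \to \UI_{g'}$. The two assignments are manifestly inverse to each other, establishing the claimed bijection, and the explicit descriptions of the three component morphisms stated in the lemma are read off from the first part of the argument.
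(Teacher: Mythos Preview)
Your proof is correct and follows essentially the same approach as the paper's. The only cosmetic difference is the order in which you unwind the data: you start from the ambient equivariant map $I_g\to I_{g'}$ and restrict to the subtorsors, whereas the paper starts from $f_+\colon I_{g,+}\to I_{g',+}$ and deduces that $f$ must also be left multiplication by~$p_+$; both routes arrive at the same pair $(p_+,p_-)$, the same transporter equation from compatibility of $f_-$ with~$f$, and the same identification of the $\iota$-compatibility with the condition $\ell^{(q)}=\ell'$ defining $E_{\Ghat,\chi,\Theta}$.
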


\begin{proof}
By definition a morphism $\UI_g\to\UI_{g'}$ consists of equivariant isomorphisms $f\colon I_g\to I_{g'}$ and $f_\pm\colon I_{g,\pm}\to I_{g',\pm}$ satisfying certain compatibilities, which we ana\-lyze in turn. First, since $I_{g,+} = S\times_{\!k}\Phat_+ = I_{g',+}$, the isomorphism $f_+$ must be left multiplication by a unique section $p_+\in \Phat_+(S)$. Next, since $I_g=S\times_{\!k} \Ghat = I_{g'}$, the compatibility with $f_+$ implies that $f$, too, is left multiplication by~$p_+$. 

On the other hand, since $I_{g,-} = g(S\times_{\!k} \Phat^{(q)}_-)$ and $I_{g',-} = g'(S\times_{\!k}\Phat^{(q)}_-)$ within $S\times_{\!k} \Ghat_-$, the isomorphism $f_-$ must be left multiplication by $g'p_-g^{-1}$ for a unique section $p_-\in \Phat^{(q)}_-(S)$. This isomorphism must be compatible with the isomorphism $f\colon I_g\stackrel{\sim}{\to}I_{g'}$, which is left multiplication by~$p_+$. The compatibility thus amounts to the equation $g'p_-g^{-1} = p_+$.

The last compatibility is the commutativity of the diagram of isomorphisms
\vskip-8pt
$$\xymatrix@C+20pt{
I^{(q)}_{g,+}/U^{(q)}_+ \ar[r]^{p^{(q)}_+} \ar[d]_{g} & 
I^{(q)}_{g',+}/U^{(q)}_+ \ar[d]_{g\smash{'}} \\
I_{g,-}/U^{(q)}_- \ar[r]^{g'p_-g^{-1}} &
I_{g',-}/U^{(q)}_- \rlap{,}\\}$$
where each arrow is defined as left multiplication by the indicated element. This amounts to the equation $p^{(q)}_+U^{(q)}_+ = p_-U^{(q)}_-$ in $\Phat^{(q)}_+/U^{(q)}_+ \cong \Lhat^{(q)} \cong \Phat^{(q)}_-/U^{(q)}_-$. That in turn is equivalent to $p_+ = \ell u_+$ and $p_- = \ell^{(q)} u_-$ with $\ell\in \Lhat(S)$, $u_+\in U_+(S)$, and $u_- \in U^{(q)}_-(S)$, or in other words to $(p_+,p_-)\in E_{\Ghat,\chi,\Theta}(S)$. 

Combined with the earlier relation $g' = p_+g p_-^{-1}$ this means that $(p_+,p_-)$ lies in the transporter $\Transp_{E_{\Ghat,\chi,\Theta}(S)}(g,g')$. Thus the map in the lemma defines a bijection between this transporter and the set of morphisms $\UI_g\to\UI_{g'}$, as desired.
\end{proof}

\begin{proposition}\label{GZipStack}
The stack $\GhatZip^{\chi,\Theta}_k$ of $\Ghat$-zips of type ${(\chi,\Theta)}$ is isomorphic to the algebraic quotient stack $[E_{\Ghat,\chi,\Theta}\backslash \Ghat_k]$. In particular, the isomorphism classes of $\Ghat$-zips of type $(\chi,\Theta)$ over any algebraically closed field $K$ containing $k$ are in bijection with the $E_{\Ghat,\chi,\Theta}(K)$-orbits on $\Ghat(K)$.
\end{proposition}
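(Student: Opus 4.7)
My strategy is to produce the isomorphism by descent from the smooth cover $\pi\colon \Ghat_k \to \GhatZip^{\chi,\Theta}_k$, $g\mapsto \UI_g$ supplied by Construction~\ref{GZipCons}. Lemma~\ref{GZipLocStandard} says precisely that $\pi$ is essentially surjective after passage to an \'etale cover, so in particular $\pi$ is faithfully flat; and Lemma~\ref{StandardGZipMorphisms} will compute the transition data along~$\pi$ in terms of the zip group $E_{\Ghat,\chi,\Theta}$. Assembling these facts should identify $\GhatZip^{\chi,\Theta}_k$ with the stack associated to the action groupoid of $E_{\Ghat,\chi,\Theta}$ on $\Ghat_k$, namely $[E_{\Ghat,\chi,\Theta}\backslash \Ghat_k]$.

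To make this precise, I first compute the $2$-fiber product $\Ghat_k\times_{\GhatZip^{\chi,\Theta}_k}\Ghat_k$. For a scheme $S$ over $k$ its $S$-valued points are triples $(g,g',\alpha)$ consisting of sections $g,g'\in \Ghat(S)$ and an isomorphism $\alpha\colon\UI_g\isoto\UI_{g'}$ of $\Ghat$-zips over $S$. By Lemma~\ref{StandardGZipMorphisms}, the set of such $\alpha$ is naturally in bijection with the transporter $\Transp_{E_{\Ghat,\chi,\Theta}(S)}(g,g')$, and in view of the action formula~(\ref{ZipGroupActionDef}) the assignment $(g,g',\alpha)\mapsto(g,(p_+,p_-))$ identifies the fiber product with $\Ghat_k\times_k E_{\Ghat,\chi,\Theta}$ in a way that turns the two canonical projections onto $\Ghat_k$ into the source and target morphisms of the action groupoid $E_{\Ghat,\chi,\Theta}\ltimes \Ghat_k\rightrightarrows\Ghat_k$. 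One should also check that composition of morphisms of $\Ghat$-zips corresponds to multiplication in $E_{\Ghat,\chi,\Theta}$, but this is a direct calculation from the explicit form of the bijection in Lemma~\ref{StandardGZipMorphisms}.

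Thus $\pi$ endows $\GhatZip^{\chi,\Theta}_k$ with the same smooth groupoid presentation as the algebraic quotient stack $[E_{\Ghat,\chi,\Theta}\backslash \Ghat_k]$ (cf.\ Subsection~\ref{Quot1}), yielding by descent a canonical equivalence $[E_{\Ghat,\chi,\Theta}\backslash \Ghat_k]\isoto \GhatZip^{\chi,\Theta}_k$. Alternatively, one may write down the inverse functor explicitly by sending a pair $(T,f)$ with $T$ a left $E_{\Ghat,\chi,\Theta}$-torsor over $S$ and $f\colon T\to \Ghat_k$ equivariant, to the $\Ghat$-zip obtained by gluing $\UI_{f(t)}$ along the \'etale-local trivializations $t\in T(S')$ via the cocycle furnished by Lemma~\ref{StandardGZipMorphisms}.

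The second assertion then follows at once: since $E_{\Ghat,\chi,\Theta}$ is smooth, every left $E_{\Ghat,\chi,\Theta}$-torsor over $\Spec K$ is trivial for $K$ algebraically closed, so the isomorphism classes in $[E_{\Ghat,\chi,\Theta}\backslash\Ghat_k](\Spec K)$ correspond bijectively to the orbit set $E_{\Ghat,\chi,\Theta}(K)\backslash\Ghat(K)$. The main substantive work is already done in Lemmas~\ref{GZipLocStandard} and~\ref{StandardGZipMorphisms}; the only remaining delicacy is the careful bookkeeping of the left/right conventions for torsors and of the twisted action~(\ref{ZipGroupActionDef}) needed to verify that the two groupoid structures genuinely coincide as groupoid schemes rather than merely at the level of $S$-points.
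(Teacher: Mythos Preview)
Your proof is correct and takes essentially the same approach as the paper: both arguments rest on Lemmas~\ref{GZipLocStandard} and~\ref{StandardGZipMorphisms} to identify the action groupoid of $E_{\Ghat,\chi,\Theta}$ on $\Ghat_k$ with the groupoid underlying $\GhatZip^{\chi,\Theta}_k$. The only cosmetic difference is packaging---the paper phrases this as showing that the action prestack $\CX$ (objects $\Ghat(S)$, morphisms the transporters) maps fully faithfully into $\GhatZip^{\chi,\Theta}_k$ and then invokes stackification (\cite{ChampsAlgebriques}~3.4.3), whereas you compute the $2$-fiber product along the atlas $\pi$ directly; these are two equivalent ways of saying the same thing.
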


\begin{proof}
Consider the category $\CX$ fibered in groupoids over $(({\Sch}/k))$ defined as follows: For any scheme $S$ over $k$ the class of objects of $\CX(S)$ is the set $\Ghat(S)$, and for any elements $g,g'\in \Ghat(S)$ the set of morphisms from $g$ to $g'$ is the transporter $\Transp_{E_{\Ghat,\chi,\Theta}}(g,g')$, with composition given by the multiplication in $E_{\Ghat,\chi,\Theta}$. For any morphism $S'\to S$ of schemes over~$k$, the pullback of objects and morphisms is given by the canonical maps $\Ghat(S)\to \Ghat(S')$ and $E_{\Ghat,\chi,\Theta}(S) \to E_{\Ghat,\chi,\Theta}(S')$. Since $E_{\Ghat,\chi,\Theta}$ is a scheme, this is a prestack, i.e., it satisfies effective descent for morphisms. By \cite{ChampsAlgebriques} 3.4.3, the stackification (for this notion see \cite{ChampsAlgebriques} 3.2) of this prestack is the quotient stack $[E_{\Ghat,\chi,\Theta}\backslash \Ghat_k]$. 

As can be verified directly from its description, the bijection in Lemma~\ref{StandardGZipMorphisms} is compatible with pullback and composition and sends $1\in \Transp_{E_{\Ghat,\chi,\Theta}}(g,g)$ to the identity morphism $\id\colon \UI_g\to \UI_g$ for all $g\in \Ghat(S)$. Thus there is a fully faithful morphism $\CX\to \GhatZip^{\chi,\Theta}_k$ which sends $g\in \CX(S) = \Ghat(S)$ to $\UI_g$ and which acts on morphisms by the bijection of Lemma~\ref{StandardGZipMorphisms}. Lemma~\ref{GZipLocStandard} is then equivalent to saying that this morphism induces an isomorphism from the stackification of $\CX$ to $\GhatZip^{\chi,\Theta}_k$. Since the former is $[E_{\Ghat,\chi,\Theta}\backslash \Ghat_k]$, the proposition follows.
\end{proof}

\begin{corollary}\label{SmoothStack}
$\GhatZip^{\chi,\Theta}_k$ is a smooth algebraic stack of dimension $0$ over $k$.
\end{corollary}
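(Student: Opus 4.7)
The plan is to deduce everything from the isomorphism $\GhatZip^{\chi,\Theta}_k \cong [E_{\Ghat,\chi,\Theta}\backslash\Ghat_k]$ established in Proposition~\ref{GZipStack}, so that only properties of this quotient stack need to be checked.

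First I would note that $E_{\Ghat,\chi,\Theta}$ is a smooth affine algebraic group over~$k$, being a closed subgroup scheme of $\Phat_+\times_{\!k}\Phat_-^{(q)}$ and reduced (since $k$ is perfect and all groups in sight are smooth); and $\Ghat_k$ is a smooth $k$-scheme of finite type on which $E_{\Ghat,\chi,\Theta}$ acts. By \cite{ChampsAlgebriques}, Proposition~10.13.1, the quotient stack $[E_{\Ghat,\chi,\Theta}\backslash\Ghat_k]$ is therefore an algebraic stack over~$k$, and the canonical morphism $\Ghat_k\to[E_{\Ghat,\chi,\Theta}\backslash\Ghat_k]$ is smooth and surjective. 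Since smoothness of an algebraic stack is tested along a smooth surjective presentation (\cite{ChampsAlgebriques}, D\'efinition~4.14) and $\Ghat_k$ is smooth over~$k$, the stack $[E_{\Ghat,\chi,\Theta}\backslash\Ghat_k]$ is smooth over~$k$.

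It remains to verify that the dimension is~$0$. For a quotient stack of a smooth scheme by a smooth group acting on it, the dimension is $\dim\Ghat_k-\dim E_{\Ghat,\chi,\Theta}$. Using the description
\[
E_{\Ghat,\chi,\Theta} \;=\; \bigl\{(\ell u_+,\ell^{(q)}u_-)\bigm|\ell\in\Lhat,\ u_+\in U_+,\ u_-\in U_-^{(q)}\bigr\},
\]
the projection to $\Lhat\times_{\!k}U_+\times_{\!k}U_-^{(q)}$ given by $(\ell u_+,\ell^{(q)}u_-)\mapsto(\ell,u_+,u_-)$ is an isomorphism of schemes; hence
\[
\dim E_{\Ghat,\chi,\Theta} \;=\; \dim\Lhat+\dim U_++\dim U_-^{(q)} \;=\; \dim L+\dim U_++\dim U_- \;=\; \dim G \;=\; \dim\Ghat,
\]
where we used that $\Lhat$ and $L$ have the same dimension and that $G=L\ltimes U_+$ as a variety via the multiplication map of the big cell, etc. Consequently $\dim[E_{\Ghat,\chi,\Theta}\backslash\Ghat_k]=0$.

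None of the steps present a real obstacle: the only point requiring mild care is the dimension count, where one must remember that $\Lhat$ is only a union of finitely many cosets of $L$ (all of the same dimension) so that $\dim\Lhat=\dim L$, and that $U_-^{(q)}$ has the same dimension as $U_-$ since Frobenius pullback preserves dimension. Everything else is formal from the quotient stack formalism together with Proposition~\ref{GZipStack}.
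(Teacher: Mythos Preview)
Your proof is correct and follows essentially the same route as the paper's: both deduce everything from Proposition~\ref{GZipStack} and the fact that $\Ghat_k$ and $E_{\Ghat,\chi,\Theta}$ are smooth of the same dimension, invoking \cite{ChampsAlgebriques} Proposition~10.13.1 for algebraicity. The only slip is expository: you write ``$G=L\ltimes U_+$ as a variety via the multiplication map of the big cell,'' but of course $L\ltimes U_+=P_+$, not $G$; what you want is that the big cell $U_-\cdot L\cdot U_+$ is open in $G$ (equivalently, $\Lie G=\Lie U_-\oplus\Lie L\oplus\Lie U_+$), which gives the dimension equality.
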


\begin{proof}
The quotient stack $[E_{\Ghat,\chi,\Theta}\backslash \Ghat_k]$ it is algebraic by \cite{ChampsAlgebriques} Proposition 10.13.1, and 
% by definition of the quotient stack 
the canonical morphism $\Ghat_k \to [E_{\Ghat,\chi,\Theta}\backslash \Ghat_k]$ is a torsor over the group scheme $E_{\Ghat,\chi,\Theta}$. As $\Ghat_k$ and $E_{\Ghat,\chi,\Theta}$ are smooth of the same dimension, this quotient stack is smooth of dimension $0$ over~$k$. The corollary thus follows from Proposition \ref{GZipStack}.
\end{proof}

%%%%%%%%%%%%%%%%%%%%%%%%%%%%%%%%%%%%%%%%%%%%%%%%%

\subsection{The topological space underlying $\GhatZip^{\chi,\Theta}_k$}
\label{TopUnderZip}

% To describe $\GhatZip^{\chi,\Theta}_k$ 
We recall some notation and facts from \cite{PWZ1}, especially from Sections 2.2, 6, and~10.

Choose an algebraic closure $\kbar$ of $k$ and let $\Gamma := \Gal(\kbar/k)$ be the corresponding Galois group of~$k$. Let $T\subset B\subset G_\kbar$ be a maximal torus, respectively a Borel subgroup of~$G_\kbar$. Consider the finite groups
\begin{eqnarray*}
     W &\defeq& \Norm_{ G   (\kbar)}(T(\kbar))/T(\kbar), \\
\hat W &\defeq& \Norm_{\Ghat(\kbar)}(T(\kbar))/T(\kbar), \\
\Omega &\defeq& \bigl(\Norm_{\Ghat(\kbar)}(T(\kbar)) \cap \Norm_{\Ghat(\kbar)}(B(\kbar))\bigr)\!\bigm/\!T(\kbar).
\end{eqnarray*}
The fact that $W$ acts simply transitively on the set of Borel subgroups containing $T_\kbar$ implies that $\hat W = W \rtimes \Omega$, and the fact that $G(\kbar)$ acts transitively on the set of all maximal tori of $G_\kbar$ implies that $\Omega \cong \hat W/W \cong \pi_0(\hat G)(\kbar)$.
Also, let 
% $S := S(G,B,T) \subset W$ 
$S\subset W$ be the set of simple reflections associated to the pair $(T,B)$. As this pair is unique up to conjugation by $G(\kbar)$, and $\Norm_{G(\kbar)}(T(\kbar)) \cap \Norm_{G(\kbar)}(B(\kbar)) = T(\kbar)$, the Coxeter system $(W,S)$ and the groups $\hat W$ and $\Omega$ are, up to unique isomorphism, independent of the choice of $T$ and~$B$. 
The inner automorphism of $\hat W$ induced by an element $x\in\hat W$ will be denoted by ${\rm int}(x)\colon \hat w \mapsto \leftexp{x}{\hat w} := x \hat w x^{-1}$.

Recall that the length of an element $w\in W$ is the smallest number $\ell(w)$ such that $w$ can be written as a product of $\ell(w)$ simple reflections. For any subsets $K,K' \subseteq S$, we denote by $W_K$ the subgroup of $W$ generated by $K$ and by $\leftexp{K}{W}$ (resp.\ $W^{K'}$, resp.\ $\doubleexp{K}{W}{K'}$) the set of $w \in W$ that are of minimal length in the left coset $W_Kw$ (resp.\ in the right coset $wW_{K'}$, resp.\ in the double coset $W_KwW_{K'}$).
We let $w_0 \in W$ denote the unique element of maximal length in $W$, and $w_{0,K}$ the unique element of maximal length in~$W_K$.

The Frobenius isogeny $\hat\varphi\colon \Ghat \to \Ghat$ relative to $\BF_q$ induces an automorphism $\bar\varphi$ of $\hat W$ which preserves $W$ and~$\Omega$. Its restriction to $W$ is an automorphism of Coxeter systems $(W,S) \iso (W,S)$. Therefore $\bar\varphi$ preserves the length of elements in $W$ and in particular satisfies $\bar\varphi(w_0) = w_0$. 
% It also satisfies $\bar\varphi(W_K) = W_{\bar\varphi(K)}$ and $\bar\varphi(W^K) = W^{\bar\varphi(K)}$ and so on.

% Let $T \subset G$ be a maximal torus defined over $k$ such that $\chi^{(q)}$ factors through $T$ and let $B \subset P_-^{(q)}$ be a Borel subgroup. As $G$ admits a model over a finite field, $G$ is quasi-split and we may choose $T$ und $B$ such that they are defined over $k$. Let $W = W(G,T) = \Norm_G(T)(k^{\rm sep})/T(k^{\rm sep})$ be the Weyl group of $G$ with respect to $T$ and let $S = S(G,B,T) \subset W$ the set of simple reflections defined by $B$. Then $(W,S)$ is a Coxeter system and $\Gamma$ acts on $W$ by automorphisms preserving $S$.

Let $I \subseteq S$ be the type of the parabolic subgroup~$P_+$, and $J\subseteq S$ the type of~$P_-^{(q)}$. The fact that $P_-$ is opposite to $P_+$ implies that $J = \bar\varphi(\leftexp{w_0}{I}) = \leftexp{w_0}{\bar\varphi(I)}$. We write these equations in the form $J = \bar\varphi(\leftexp{y}{I}) = \leftexp{x}{\bar\varphi(I)}$, where $x \in \doubleexp{J}{W}{\bar\varphi(I)}$ is the unique element of minimal length in $W_Jw_0W_{\bar\varphi(I)}$ and  $y \defeq \bar\varphi^{-1}(x)$. 
Then $\hat\psi \defeq {\rm int}(x) \circ \bar\varphi = \bar\varphi \circ {\rm int}(y)$ is an automorphism of $\hat W$ which induces an isomorphism of Coxeter systems
$$(W_I,I) \iso (W_J,J).$$
From \cite{PWZ1} Proposition 2.7 we can deduce that 
\UseTheoremCounterForNextEquation
\begin{equation}\label{w0w0formula}
y = w_0w_{0,I} = w_{0,\bar\varphi^{-1}(J)}w_0.
\end{equation}

Via the isomorphism $\pi_0(\hat G)(\kbar)\cong\Omega$ we can view $\Theta(\kbar)$ (resp. $\Theta^{(q)}(\kbar)$) as a subgroup of~$\Omega$, which by abuse of notation we will again denote by~$\Theta$ (resp. $\Theta^{(q)}$).
Note that, since $\Theta \subseteq \Norm_{\Ghat}(P_+)/P_+$, conjugation by elements of $\Theta$ preserves the type $I$ of $P_+$ and thus the subgroup~$W_I$. Therefore $W_I\Theta = W_I\rtimes\Theta$ is a subgroup of~$\hat W$. 
Since $\Theta^{(q)} \subseteq \Norm_{\Ghat}(P^{(q)}_-)/P^{(q)}_-$, the same observation holds for $W_J\Theta^{(q)} = W_J\rtimes\Theta^{(q)}$, and the automorphism $\hat\psi$ sends the subgroup $\Theta\subseteq\Omega$ to $\Theta^{(q)}\subseteq\Omega$.
By \cite{PWZ1} Lemma~10.4, the map
\UseTheoremCounterForNextEquation
\begin{equation}\label{ThetaAction}
(\theta, \hat w) \mapsto \theta{\hat w}\hat\psi(\theta)^{-1}
\end{equation}
defines a left action of $\Theta$ on the subset $\leftexp{I}{W}\Omega \subseteq\hat W$.

Recall that the Bruhat order $\leq$ on $W$ is defined by $w'\leq w$ if for some (and equivalently for any) expression of $w$ as a product of $\ell(w)$ simple reflections, by leaving out certain factors one can obtain an expression of $w'$ as a product of $\ell(w')$ simple reflections. We extend the Bruhat order to $\hat W$ by setting
\UseTheoremCounterForNextEquation
\begin{equation}\label{ExtendBruhat1}
\hbox{$w'\omega' \leq w\omega$\ \ if and only if\ \  $w' \leq w$ and $\omega' = \omega$}
\end{equation}
for any $w,w' \in W$ and $\omega,\omega' \in \Omega$.
% The Galois group $\Gal(\kbar/\BF_q)$ acts continuously on the discrete groups $\hat W$, $\Omega$, and $W$ by automorphisms which preserve $S$ because $G$ is quasi-split. 
Also, for any $\hat w, \hat w' \in \leftexp{I}{W}\Omega$ we write
\UseTheoremCounterForNextEquation
\begin{equation}\label{DefPartialOrder}
\hbox{$\hat w' \preceq \hat w$ \ if and only if there exists $\hat v \in W_I\Theta$ with $\hat v \hat w'\hat\psi(\hat v)^{-1} \leq \hat w$.}
\end{equation}
By \cite{PWZ1} Theorem~10.9, see also \cite{He:GStablePieces}, this defines a partial order on $\leftexp{I}{W}\Omega$.
We also extend the length function from $W$ to $\What$ by setting
\UseTheoremCounterForNextEquation
\begin{equation}\label{ExtendLength}
\ell(w\omega) := \ell(w)
\end{equation}
for any $w\in W$ and $\omega\in\Omega$.

\begin{lemma}\label{ThetaPartialOrder}
The action~\eqref{ThetaAction} preserves the extended Bruhat order $\leq$ on $\hat W$, the partial order $\preceq$ on $\leftexp{I}{W}\Omega$, and the extended length function $\ell$ on $\hat W$.
\end{lemma}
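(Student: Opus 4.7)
The three assertions all rest on the single observation that conjugation by any element of $\Omega$ (in particular by any element of $\Theta$) is an automorphism of the Coxeter system $(W,S)$. Indeed, representatives of $\Omega$ normalize both $T$ and $B$ (by the very definition in Subsection~\ref{TopUnderZip}), hence preserve the set of positive roots and therefore the set of simple reflections $S$. This means $\mathrm{int}(\theta)$ preserves both the length function and the Bruhat order on~$W$.

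The first step is to decompose the action. Write any element of $\hat W$ uniquely as $w\omega$ with $w\in W$ and $\omega\in\Omega$. Since $\Omega$ normalizes $W$ and since $\hat\psi(\theta)\in\Theta^{(q)}\subseteq\Omega$, I compute
\[
\theta(w\omega)\hat\psi(\theta)^{-1}
\ =\ \bigl(\theta w\theta^{-1}\bigr)\cdot\bigl(\theta\omega\hat\psi(\theta)^{-1}\bigr)
\]
with first factor in $W$ and second in $\Omega$. The length preservation is now immediate: by~\eqref{ExtendLength} and the observation above, $\ell(\theta w\theta^{-1})=\ell(w)$. The Bruhat-order assertion is equally immediate from~\eqref{ExtendBruhat1}: two elements $w\omega$ and $w'\omega'$ are comparable only if $\omega=\omega'$, in which case the $\Omega$-parts after the action also coincide, and the assertion reduces to the fact that $\mathrm{int}(\theta)$ preserves the Bruhat order on~$W$.

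For the partial order $\preceq$ on $\leftexp{I}{W}\Omega$, suppose $\hat w'\preceq\hat w$ and choose a witness $\hat v\in W_I\Theta$ with $\hat v\hat w'\hat\psi(\hat v)^{-1}\le \hat w$. I claim that $\hat v':=\theta\hat v\theta^{-1}$ is a witness showing $\theta\hat w'\hat\psi(\theta)^{-1}\preceq\theta\hat w\hat\psi(\theta)^{-1}$. First, $\hat v'\in W_I\Theta$ because $\Theta$ normalizes both $W_I$ (since $\Theta\subseteq\Omega$ preserves the type $I$ of $P_+$) and itself. A direct computation, using $\hat\psi(\theta\hat v\theta^{-1})=\hat\psi(\theta)\hat\psi(\hat v)\hat\psi(\theta)^{-1}$, yields
\[
\hat v'\,\bigl(\theta\hat w'\hat\psi(\theta)^{-1}\bigr)\,\hat\psi(\hat v')^{-1}
\ =\ \theta\,\bigl(\hat v\hat w'\hat\psi(\hat v)^{-1}\bigr)\,\hat\psi(\theta)^{-1}.
\]
Applying the already-proved Bruhat-order invariance to the inequality $\hat v\hat w'\hat\psi(\hat v)^{-1}\le \hat w$ gives the required inequality with $\hat w$ replaced by $\theta\hat w\hat\psi(\theta)^{-1}$, finishing the proof.

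No step is genuinely hard; the only point requiring care is the bookkeeping for the conjugate-witness computation together with the initial verification that $\mathrm{int}(\Theta)$ is a Coxeter automorphism of $(W,S)$. Once those are in place, the three statements follow almost formally.
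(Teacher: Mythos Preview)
Your proof is correct and follows essentially the same approach as the paper's own proof: decompose $\theta(w\omega)\hat\psi(\theta)^{-1}$ into its $W$- and $\Omega$-components, use that $\mathrm{int}(\theta)$ is a Coxeter automorphism of $(W,S)$ to handle the length and Bruhat order, and for $\preceq$ conjugate the witness $\hat v$ by $\theta$. The paper presents the same computation with the same justifications (including the remark that $\Theta$ normalizes $W_I$ because it preserves the type~$I$); your write-up simply foregrounds the Coxeter-automorphism observation more explicitly.
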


\begin{proof}
Consider any elements $\theta\in\Theta$ and $w'$, $w \in W$ and $\omega'$, $\omega \in \Omega$. First assume that $w'\omega' \leq w\omega$, in other words, that $w' \leq w$ and $\omega' = \omega$. Then $\theta w' \theta^{-1} \leq \theta w \theta^{-1}$ and $\theta \omega' \hat\psi(\theta)^{-1} = \theta \omega \hat\psi(\theta)^{-1}$, and the latter is again an element of~$\Omega$, because $\Theta^{(q)}=\hat\psi(\Theta)\subset\Omega$. By (\ref{ExtendBruhat1}) we therefore find that
\[
\theta w'\omega'\hat\psi(\theta)^{-1} 
= \theta w' \theta^{-1}\cdot\theta \omega' \hat\psi(\theta)^{-1} 
\leq \theta w \theta^{-1}\cdot\theta \omega \hat\psi(\theta)^{-1} 
= \theta w\omega \hat\psi(\theta)^{-1}.
\]
Thus the action \eqref{ThetaAction} preserves the extended Bruhat order $\leq$ on~$\hat W$.

Next, the last equality above and the fact that $\theta \omega \hat\psi(\theta)^{-1}\in\Omega$ also imply that the length of $\theta w\omega \hat\psi(\theta)^{-1}$ is equal to that of $\theta w \theta^{-1}$. Since $\theta\in\Omega$, that length is equal to the length of $w$ and hence of $w\omega$, proving that the action \eqref{ThetaAction} preserves the extended length function on~$\hat W$.

Now assume that $w$ ,$w' \in \leftexp{I}{W}$ and $w'\omega' \preceq w\omega$, which means that $\hat v w'\omega' \hat\psi(\hat v)^{-1} \leq w\omega$ for some  $\hat v \in W_I\Theta$. Then we have just shown that
\[
\theta\hat v\theta^{-1}\cdot\theta w'\omega' \hat\psi(\theta)^{-1} 
\cdot \hat\psi(\theta\hat v\theta^{-1})^{-1} 
= \theta\hat v w'\omega' \hat\psi(\hat v)^{-1} \hat\psi(\theta)^{-1} 
\leq \theta w\omega \hat\psi(\theta)^{-1}.
\]
Since $\theta$ normalizes~$W_I$, it follows that $\hat u := \theta\hat v\theta^{-1}$ is an element of $W_I\Theta$ which satisfies $\hat u\cdot\theta w'\omega' \hat\psi(\theta)^{-1} \cdot \hat\psi(\hat u)^{-1} \leq \theta w\omega \hat\psi(\theta)^{-1}$ and thus by (\ref{DefPartialOrder}) shows that $\theta w'\omega' \hat\psi(\theta)^{-1} \preceq \theta w\omega \hat\psi(\theta)^{-1}$. Therefore the action~\eqref{ThetaAction} preserves the partial order $\preceq$, and we are done.
\end{proof}

As a consequence of Lemma \ref{ThetaPartialOrder}, the partial order $\preceq$ from (\ref{DefPartialOrder}) induces a partial order on the set of $\Theta$-orbits
\UseTheoremCounterForNextEquation
\begin{equation}\label{DefTopSpace}
\Xi^{\chi,\Theta} \defeq \Theta\backslash\leftexp{I}{W}\Omega.
\end{equation}
By Proposition \ref{TopPartial1} this in turn defines a $T_0$ topology on the finite set $\Xi^{\chi,\Theta}$. 

\medskip
Now observe that since the subgroups $P_\pm\subset G_k$ and $\Phat_\pm\subset \Ghat_k$ are defined over~$k$, there is a natural continuous action of $\Gamma := \Gal(\kbar/k)$ on everything discussed above. In particular this action preserves the decomposition $\hat W = W \rtimes \Omega$, the subsets $S,I,J,\leftexp{I}{W},\ldots$, the partial orders $\leq$ and $\preceq$, the length function~$\ell$, the subgroup $\Theta$ and its action on $\leftexp{I}{W}\Omega$, and so it induces an action on the topological space $\Xi^{\chi,\Theta}$. 

\begin{theorem}\label{TopGZip1}
The topological space underlying $\GhatZip^{\chi,\Theta}_k$ is naturally homeomorphic to the quotient space $\Gamma\backslash\Xi^{\chi,\Theta}$. 
\end{theorem}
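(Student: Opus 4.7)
The plan is to reduce the statement, via the realization of $\GhatZip^{\chi,\Theta}_k$ as the quotient stack $[E_{\Ghat,\chi,\Theta}\backslash \Ghat_k]$ established in Proposition~\ref{GZipStack}, to a purely group-theoretic classification of the $E_{\Ghat,\chi,\Theta}(\kbar)$-orbits on $\Ghat(\kbar)$, and then to quote the results of \cite{PWZ1}.

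First, by Proposition~\ref{GZipStack} together with Proposition~\ref{TopSpaceStack}, the topological space $|\GhatZip^{\chi,\Theta}_k|$ is canonically homeomorphic to $\Gamma\backslash\bigl(E_{\Ghat,\chi,\Theta}(\kbar)\backslash \Ghat(\kbar)\bigr)$, where the set of orbits carries the quotient topology coming from $\Ghat(\kbar)$ and is $T_0$ because the $E_{\Ghat,\chi,\Theta}$-action on $\Ghat_k$ has only finitely many orbits by Remark~\ref{ItsZipData}. It therefore suffices to exhibit a $\Gamma$-equivariant homeomorphism
\[
E_{\Ghat,\chi,\Theta}(\kbar)\backslash \Ghat(\kbar) \ \liso\ \Xi^{\chi,\Theta} = \Theta\backslash\leftexp{I}{W}\Omega,
\]
where the right-hand side is endowed with the $T_0$ topology corresponding to the partial order $\preceq$ via Proposition~\ref{TopPartial1}.

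For the bijection of sets I would appeal directly to the classification of $E$-orbits in \cite{PWZ1}: by Remark~\ref{ItsZipData} the tuple $\CZ_{\Ghat,\chi,\Theta}$ becomes, after base change to $\kbar$, an orbitally finite algebraic zip datum in the sense of \cite{PWZ1}, and its zip group and action are those considered in loc.\ cit. The parametrization theorem from \cite{PWZ1} (see in particular Theorem~10.9 there, which underlies the definition (\ref{DefPartialOrder})) then gives a natural bijection between $E_{\Ghat,\chi,\Theta}(\kbar)\backslash \Ghat(\kbar)$ and $\Theta\backslash\leftexp{I}{W}\Omega$. Moreover the same theorem of \cite{PWZ1} identifies the closure order on orbits with the partial order~$\preceq$ on $\leftexp{I}{W}\Omega$ descended to the quotient by $\Theta$, which by Lemma~\ref{ThetaPartialOrder} is well defined; via Proposition~\ref{TopPartial1} this upgrades the bijection to a homeomorphism.

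The remaining point, and the one requiring the most care, is $\Gamma$-equivariance: the Weyl-group combinatorics were set up over $\kbar$, but the subgroups $P_\pm,\Phat_\pm,E_{\Ghat,\chi,\Theta}$ are defined over the finite field $k$, hence $\Gamma$-stable, and so the action of $\Gamma$ on $\Ghat(\kbar)$ descends to the orbit set on the left. On the right, the action of $\Gamma$ permutes the chosen data $(T,B)$ up to inner automorphism, and one checks that the induced action on $\hat W = W\rtimes\Omega$ preserves $S$, $I$, $J$, $\leftexp{I}{W}\Omega$, $\Theta$, $\bar\varphi$, $\hat\psi$, and hence the $\Theta$-action~\eqref{ThetaAction} and the partial order $\preceq$; this is already collected in the paragraph following Lemma~\ref{ThetaPartialOrder}. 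The parametrization of \cite{PWZ1} is constructed intrinsically from the parabolic data (via the choice of representatives in the Bruhat decomposition with respect to $T\subset B$), so both the bijection and its order-compatibility are $\Gamma$-equivariant up to the usual independence-of-choices argument. Passing to $\Gamma$-quotients gives the desired homeomorphism, and combining this with the first step completes the proof.
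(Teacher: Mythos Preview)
Your proposal is correct and follows essentially the same route as the paper: reduce to the quotient stack via Proposition~\ref{GZipStack}, use orbital finiteness (Remark~\ref{ItsZipData}) to apply Proposition~\ref{TopSpaceStack}, and then invoke the classification and closure results of \cite{PWZ1}. The paper's proof is slightly terser and cites Theorems~10.9 and~10.10 of \cite{PWZ1} specifically for the orbit parametrization and closure relation, respectively, while you spend more (but not unwarranted) effort making the $\Gamma$-equivariance explicit---something the paper leaves implicit from the setup preceding the theorem.
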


\begin{proof}
By Proposition~\ref{GZipStack} the stack $\GhatZip^{\chi,\Theta}_k$ is isomorphic to $[E_{\Ghat,\chi,\Theta}\backslash \Ghat_k]$. By \cite{PWZ1}, Proposition~7.3 (see also Remark~\ref{ItsZipData}) the zip datum $\CZ_{\Ghat,\chi,\Theta,\kbar}$ is orbitally finite, i.e., the number of $E_{\Ghat,\chi,\Theta}(\kbar)$-orbits in $\Ghat(\kbar)$ is finite. We can therefore apply Proposition~\ref{TopSpaceStack}. The description of the topological space now follows from the description of $E_{\Ghat,\chi,\Theta,\kbar}$-orbits in $\Ghat_\kbar$ and their closures from \cite{PWZ1} Theorems~10.9 and~10.10.
\end{proof}

\begin{remark}\label{StandardGZip}
If we replace $k$ by a suitable finite extension $k'$ within $\kbar$,
% over which $G$ becomes split and $\pi_0(\Ghat)$ becomes a constant group scheme,
the Galois group $\Gamma$ is replaced by a subgroup which acts trivially on $\hat W$ and everything else above. Then Theorem~\ref{TopGZip1} asserts that the topological space underlying $\GhatZip^{\chi,\Theta}_{k'}$ is naturally homeomorphic to $\Xi^{\chi,\Theta}$. In particular, for any algebraically closed extension field $K$ of~$\kbar$ we obtain a natural bijection 
\UseTheoremCounterForNextEquation
\begin{equation}\label{DescribeGZips1}
\Xi^{\chi,\Theta} \ \stackrel{\sim}{\longrightarrow}\ 
\left\{\begin{matrix} \text{isomorphism classes of $\Ghat$-zips}\\
\text{of type $(\chi,\Theta)$ over $K$} \end{matrix}\right\}.
\end{equation}
By \cite{PWZ1} this can be made more explicit, as follows. As the choice of $(T,B)$ was arbitary, we may without loss of generality assume that $T\subset L_K$ and $B\subset P_{-,K}$. Then we may identify $W = \Norm_G(T)(K)/T(K)$ and $\hat W = \Norm_{\Ghat}(T)(K)/T(K)$. Choose a representative $g \in \Norm_{G}(T)(K)$ of the element $y = \bar\varphi^{-1}(x) \in W$. Then by \cite{PWZ1} Lemma~12.11 the triple $(B,T,g)$ is a frame of the connected zip datum $(G_K,P_{+,K},P_{-,K}^{(q)},\varphi\colon L_K \to L_K^{(q)})$ in the sense of \cite{PWZ1} Definition~3.6.
Also, for every element $\hat w \in \leftexp{I}{W}\Omega$ choose a representative $\dothatw\in\Norm_{\Ghat}(T)(K)$, and let $\UI_{g\dothatw}$ denote the $\Ghat$-zip of type $(\chi,\Theta)$ over $K$ attached to $g\dothatw \in \Ghat(K)$ by Construction~\ref{GZipCons}. Combining the isomorphism in Proposition \ref{GZipStack} with \cite{PWZ1} Theorem~10.10 then shows that the bijection (\ref{DescribeGZips1}) sends the orbit of $\hat w$ in $\Xi^{\chi,\Theta} = \Theta\backslash\leftexp{I}{W}\Omega$ to the isomorphism class of $\UI_{g\dothatw}$.
\end{remark}

\begin{example}\label{SplitExample}
Assume that $\Ghat = G$ is a connected split reductive group over~$\BF_q$. Then $\Theta = \Omega = 1$, and $\Xi^{\chi,\Theta} = \leftexp{I}{W}$ with the trivial action of $\Gal(\kbar/\BF_q)$. All the formulas then simplify accordingly. In particular, by Theorem \ref{TopGZip1} the topological space underlying $\GZip^\chi_k$ is naturally homeomorphic to $\leftexp{I}{W}$ for every finite extension $k$ of $\BF_q$.

Moreover, the automorphism $\bar\varphi$ induced by Frobenius is the identity on~$W$. Thus $\hat\psi = {\rm int}(x)$, where $x \in \doubleexp{J}{W}{I}$ is the unique element of minimal length in $W_Jw_0W_I$. The partial order $\preceq$ on $\leftexp{I}{W}$ is therefore given by 
\UseTheoremCounterForNextEquation
\begin{equation}\label{DefPartialOrderSplit1}
\hbox{$w' \preceq w$ \ if and only if there exists $v \in W_I$ with $v w'xv^{-1}x^{-1} \leq w$.}
\end{equation}
If in addition the Dynkin diagram of $G$ has no component of type $A_n$ with $n \geq 2$, of type $D_n$ with $n \geq 5$ odd, or of type~$E_6$, then $w_0$ is central in $W$, and so $I = J$. Also, by (\ref{w0w0formula}) we then have $x = w_0w_{0,I}$, and since $w_0$ is central and $w_{0,I}^{-1}=w_{0,I}$, the partial order can then be written equivalently in the form
\UseTheoremCounterForNextEquation
\begin{equation}\label{DefPartialOrderSplit2}
\hbox{$w' \preceq w$ \ if and only if there exists $v \in W_I$ with $vw'w_{0,I}v^{-1}w_{0,I} \leq w$.}
\end{equation}
\end{example}

%%%%%%%%%%%%%%%%%%%%%%%%%%%%%%%%%%%%%%%%%%%%%%%%%

\subsection{The stratification of $\GhatZip^{\chi,\Theta}_k$}
\label{StratZip}

For any orbit in $\Gamma\backslash\Xi^{\chi,\Theta} = \Gamma\backslash(\Theta\backslash\leftexp{I}{W}\Omega)$ represented by an element $\hat w\in \leftexp{I}{W}\Omega$, let $[\hat w]$ denote the corresponding point in the topological space underlying $\GhatZip^{\chi,\Theta}_k$ via the homeomorphism in Theorem \ref{TopGZip1}.

\begin{theorem}\label{TopGZip2}
The point $[\hat w]$ underlies a smooth locally closed substack of $\GhatZip^{\chi,\Theta}_k$ of pure codimension $\dim(G/P_+) - \ell(\hat w)$, where $\ell(\ )$ denotes the extended length function from \eqref{ExtendLength}.
\end{theorem}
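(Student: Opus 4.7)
The plan is to reduce the statement to the general discussion of quotient stacks from Subsection~\ref{Quot1} via the isomorphism $\GhatZip^{\chi,\Theta}_k \cong [E_{\Ghat,\chi,\Theta}\backslash \Ghat_k]$ of Proposition~\ref{GZipStack}, and then invoke the orbit description of \cite{PWZ1}.

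First, fix a representative $\hat w \in \leftexp{I}{W}\Omega$ of the given $\Gamma$-orbit of $\Theta$-orbits. Choose lifts $g \in \Norm_G(T)(\kbar)$ of $y = \bar\varphi^{-1}(x)$ and $\dothatw \in \Norm_{\Ghat}(T)(\kbar)$ of $\hat w$ as in Remark~\ref{StandardGZip}. Combining Theorem~\ref{TopGZip1}, Proposition~\ref{TopSpaceStack}, and the explicit bijection~\eqref{DescribeGZips1}, the point $[\hat w] \in |\GhatZip^{\chi,\Theta}_k|$ corresponds to the unique algebraic $E_{\Ghat,\chi,\Theta}$-orbit $Y \subset \Ghat_k$ whose base change $Y_\kbar$ contains the geometric orbit $\CO(g\dothatw)$ of~$g\dothatw$.

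Next, I apply the general results of Subsection~\ref{Quot1} to the action of $H = E_{\Ghat,\chi,\Theta}$ on $X = \Ghat_k$. Since $k$ is perfect and $Y$ is an algebraic orbit in the smooth scheme $\Ghat_k$, the substack $[E_{\Ghat,\chi,\Theta}\backslash Y]$ is smooth and locally closed in $[E_{\Ghat,\chi,\Theta}\backslash \Ghat_k]$; its underlying topological point is precisely $[\hat w]$ by the definition of the stratification~\eqref{PointStack1}. By equation~\eqref{PointStack2} its codimension equals $\codim(\CO(g\dothatw), \Ghat_\kbar)$.

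It remains to compute this codimension. Since $\dim E_{\Ghat,\chi,\Theta} = \dim \Lhat + \dim U_+ + \dim U_- = \dim G = \dim \Ghat_k$, the codimension formula for the $E_{\Ghat,\chi,\Theta,\kbar}$-action on $\Ghat_\kbar$ gives
\[
\codim(\CO(g\dothatw),\Ghat_\kbar) \ =\ \dim\Stab_{E_{\Ghat,\chi,\Theta}(\kbar)}(g\dothatw).
\]
The principal step is now to invoke the results of \cite{PWZ1} on the orbit structure of the associated algebraic zip datum (which is orbitally finite by Remark~\ref{ItsZipData}): by \cite{PWZ1} Theorem~5.11 and its extension to the non-connected case in \cite{PWZ1} Theorem~10.10, the orbit $\CO(g\dothatw)$ has dimension $\dim P_+ + \ell(\hat w)$, equivalently its stabilizer has dimension $\dim U_- - \ell(\hat w) = \dim(G/P_+) - \ell(\hat w)$. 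Substituting yields the claimed codimension. The only real obstacle is bookkeeping in the extended (non-connected) setting, where one must make sure that the length function~\eqref{ExtendLength} and the description of orbit representatives $g\dothatw$ from Remark~\ref{StandardGZip} match the conventions in~\cite{PWZ1}; this is essentially handled by Lemma~\ref{ThetaPartialOrder} and the frame discussion in Remark~\ref{StandardGZip}.
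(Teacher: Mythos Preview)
Your argument is correct and follows essentially the same route as the paper's proof: reduce to the quotient stack via Proposition~\ref{GZipStack}, invoke the general smoothness and codimension statements from Subsection~\ref{Quot1} (in particular~\eqref{PointStack2}), and then compute the orbit codimension using the dimension formula from \cite{PWZ1}. The only cosmetic difference is that the paper cites \cite{PWZ1} Theorem~5.11 together with Lemma~10.3 for the non-connected dimension formula, whereas you route through Theorem~10.10 and an explicit stabilizer-dimension count; both lead to the same result.
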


\begin{proof}
As in the proof of Theorem \ref{TopGZip1} this translates into an assertion for the quotient stack $[E_{\Ghat,\chi,\Theta}\backslash \Ghat_k]$. Let $\Ghat_\kbar^{\hat w}$ denote the $E_{\Ghat,\chi,\Theta,\bar k}$-orbit in $\Ghat_\kbar$ corresponding to $\hat w$ by \cite{PWZ1} Theorem~10.10. Since $k$ is perfect, by the remarks following Proposition~\ref{TopSpaceStack} this determines a smooth locally closed substack of $[E_{\Ghat,\chi,\Theta}\backslash \Ghat_k]$ with underlying point~$[\hat w]$. By (\ref{PointStack2}) the codimension of this substack is equal to the codimension of $\Ghat_\kbar^{\hat w}$ in $\Ghat_\kbar$, which by \cite{PWZ1} Theorem 5.11 and Lemma 10.3 is given by the desired formula.
\end{proof}

%\begin{remark}\label{GZipsComponents}
%If $\Theta = 1$ the zip group $E_{\Ghat,\chi}$ is connected and the canonical morphism $\Ghat_k \to [E_{\Ghat,\chi}\backslash \Ghat_k]$ yields a bijection between the set of connected components of $\Ghat_k$ and the set of connected components of $\GhatZip^{\chi}_k$.
%{\bf RICHARD SAGT: Hier war wohl gemeint: set of orbits in $\Ghat_k$. Da die Korrespondenz mit Bahnen schon im obigen Beweis steht, habe ich dies auskommentiert.}
%\end{remark}

Let $S$ be a scheme over~$k$, and let $\UI$ be a $\Ghat$-zip of type $(\chi,\Theta)$ over $S$. Then $\UI$ defines a classifying morphism
\UseTheoremCounterForNextEquation
\begin{equation}\label{ClassifyingMorphism}
\zeta\colon S \longrightarrow \GhatZip^{\chi,\Theta}_k.
\end{equation}
Let $S_\UI^{\hat w}$ denote the pullback under $\zeta$ of the substack corresponding to $[\hat w]$. This is a locally closed subscheme of~$S$. As $[\hat w]$ varies, these subschemes form a finite stratification of~$S$, in other words $S$ is the set-theoretic disjoint union
% as in (\ref{PointStack1}) 
\UseTheoremCounterForNextEquation
\begin{equation}\label{GZipStrat}
S \ = \bigsqcup_{[\hat w] \in \Gamma\backslash\Xi^{\chi,\Theta}} S_\UI^{[\hat w]}.
\end{equation}
%
% If $[w]$ consists of only one element $w$, we also write $S_\UI^{w}$. 
% 
%If $G$ is split over $k$ and if $\pi_0(G_k)$ is a constant group scheme (e.g., if $k$ is algebraically closed), $\Gamma$ acts trivially and~\eqref{GZipStrat} becomes
%\UseTheoremCounterForNextEquation
%\begin{equation}\label{GZipStratSplit}
%S = \bigcup_{w \in \Xi^{\chi,\Theta}} S_\UI^{w}.
%\end{equation}
%{\bf RICHARD SAGT: Ich fand, das verwirrt nur. Wer sich fr diesen Fall interessiert, wird sich eher an Example \ref{SplitExample} wenden. Ich habe das daher auskommentiert.}
%
The description of the topology in Theorem \ref{TopGZip1} implies that for any $\hat w$ we have
\UseTheoremCounterForNextEquation
\begin{equation}\label{GZipStratClos}
\overline{S_\UI^{[\hat w]}} \ \subset 
\bigsqcup_{\substack{[\hat w'] \in \Gamma\backslash\Xi^{\chi,\Theta}\\ \hat w' \preceq \hat w}} S_\UI^{[\hat w]}
\end{equation}
For the next result recall that any open or flat morphism of schemes is generizing.

\begin{proposition}\label{Generizing1}
If the morphism $\zeta$ in \eqref{ClassifyingMorphism} is generizing,
% (This holds for instance if $\zeta$ is flat, if $\zeta$ is open, or if $S$ is locally noetherian and $\zeta$ is formally smooth.) 
the inclusion (\ref{GZipStratClos}) is an equality.
%\[
%\overline{S_\UI^{[\hat w]}} = \bigsqcup_{\substack{[\hat w'] \in \Gamma\backslash\Xi^{\chi,\Theta}\\ \hat w' \preceq \hat w}} S_\UI^{[\hat w]}.
%\]
If in addition $S$ is locally noetherian, then $S_\UI^{[\hat w]}$ is of pure of codimension $\dim(G/P_+) - \ell(\hat w)$. If $\zeta$ is smooth, then $S_\UI^{[\hat w]}$ is smooth as a scheme over~$k$.
\end{proposition}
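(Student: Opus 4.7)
The plan is to verify each of the three assertions separately, using Theorems~\ref{TopGZip1} and~\ref{TopGZip2} to transfer information from the stack $\GhatZip^{\chi,\Theta}_k$ to $S$ via the classifying morphism~$\zeta$.

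For the closure equality, I would take any $s' \in S_\UI^{[\hat w']}$ with $\hat w' \preceq \hat w$. By Theorem~\ref{TopGZip1}, the point $[\hat w]$ is a generization of $\zeta(s') = [\hat w']$ in $|\GhatZip^{\chi,\Theta}_k|$; since $\zeta$ is generizing, I can lift this to a generization $s$ of $s'$ in $S$ with $\zeta(s) = [\hat w]$, so that $s \in S_\UI^{[\hat w]}$ and $s' \in \overline{\{s\}} \subset \overline{S_\UI^{[\hat w]}}$. Combined with \eqref{GZipStratClos}, this yields the desired equality.

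For the smoothness assertion, let $\CX^{[\hat w]} \subset \GhatZip^{\chi,\Theta}_k$ denote the smooth locally closed substack underlying $[\hat w]$ provided by Theorem~\ref{TopGZip2}. Then $S_\UI^{[\hat w]}$ is the $2$-fiber product of $S$ and $\CX^{[\hat w]}$ over $\GhatZip^{\chi,\Theta}_k$. When $\zeta$ is smooth, its base change $S_\UI^{[\hat w]} \to \CX^{[\hat w]}$ is also smooth, and composing with the smooth structure morphism $\CX^{[\hat w]} \to \Spec k$ yields the smoothness of $S_\UI^{[\hat w]}$ over~$k$.

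For the pure codimension assertion (assuming $\zeta$ generizing and $S$ locally noetherian), I would reduce to a scheme-theoretic statement via the smooth atlas $a\colon \Ghat_k \to \GhatZip^{\chi,\Theta}_k$ implicit in Proposition~\ref{GZipStack}. Form $\tilde S := S \times_{\GhatZip^{\chi,\Theta}_k} \Ghat_k$ with projections $p\colon \tilde S \to S$ (smooth surjective, as it is the base change of the atlas $a$) and $\tilde\zeta\colon \tilde S \to \Ghat_k$; the latter inherits the generizing property, since $a$ is flat. Because smooth morphisms preserve codimensions, it suffices to show that $\tilde\zeta^{-1}(\Ghat_k^{\hat w}) = p^{-1}(S_\UI^{[\hat w]})$ has pure codimension $c := \dim(G/P_+) - \ell(\hat w)$ in $\tilde S$, where $\Ghat_k^{\hat w}$ is the $E_{\Ghat,\chi,\Theta}$-orbit corresponding to $[\hat w]$, which by \cite{PWZ1} Theorem~5.11 has pure codimension $c$ in the smooth scheme $\Ghat_k$. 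The lower bound $\ge c$ is obtained by iterated generization-lifting (as in the argument for the closure equality) along a maximal chain of specializations inside $\overline{\Ghat_k^{\hat w}}$. The main obstacle will be the matching upper bound $\le c$, which I plan to extract by combining the generizing property of $\tilde\zeta$ with the fact that $\Ghat_k$ is smooth (hence catenary) and that the $E_{\Ghat,\chi,\Theta}$-orbit stratification of $\Ghat_k$ has all strata of known codimension, thereby constraining the dimension of the local ring of $\tilde S$ at any generic point of an irreducible component of $\tilde\zeta^{-1}(\Ghat_k^{\hat w})$.
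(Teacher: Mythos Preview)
Your arguments for the closure equality and for smoothness are correct and are exactly what the paper does; the paper just packages the first one as the identity $\zeta^{-1}(\overline{\Upsilon}) = \overline{\zeta^{-1}(\Upsilon)}$ for a generizing~$\zeta$, which is the same point-by-point lifting you wrote out.

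For the codimension assertion, however, your plan has a genuine gap. Two remarks:

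\emph{First}, the claim that $\tilde\zeta$ inherits the generizing property ``since $a$ is flat'' is not justified: generizing is not stable under base change in general. One can argue it here using the $E$-equivariance of $\tilde\zeta$ and the fact that $p\colon\tilde S\to S$ is an $E$-torsor, but this takes more than the one clause you wrote.

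\emph{Second, and more importantly}, your strategy for the upper bound $\codim\le c$ is misdirected. The generizing property (whether for $\zeta$ or for $\tilde\zeta$) only produces chains of generizations and therefore only yields the \emph{lower} bound $\codim\ge c$; no amount of catenary bookkeeping in $\Ghat_k$ will turn this into an upper bound. The upper bound comes from a different mechanism entirely: the substack underlying $[\hat w]$ is smooth inside the smooth stack $\GhatZip^{\chi,\Theta}_k$ (Theorem~\ref{TopGZip2} and Corollary~\ref{SmoothStack}), hence, after restricting to an open substack in which it is closed, it is a regular immersion of codimension~$c$. Pulling back along $\zeta$, the stratum $S_\UI^{[\hat w]}$ is locally cut out by $c$ equations in~$S$, and Krull's principal ideal theorem (this is where ``$S$ locally noetherian'' enters) gives $\codim\le c$. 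Equivalently, through your atlas: $\Ghat_k^{\hat w}$ is smooth of codimension $c$ in the smooth scheme $\Ghat_k$, hence regularly embedded, and the same Krull argument applies to $\tilde\zeta^{-1}(\Ghat_k^{\hat w})$ in~$\tilde S$.

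The paper's own proof of this part is a single sentence (``codimension is well-defined and preserved by~$\zeta$''), which is terse enough that one could read your attempt as an honest effort to unpack it; but the unpacking needs the regular-immersion/Krull ingredient, not more generizing.
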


\begin{proof}
If $\zeta$ is generizing, then $\zeta^{-1}(\overline{\Upsilon}) = \overline{\zeta^{-1}(\Upsilon)}$ for any locally closed substack $\Upsilon$ of $\GhatZip^{\chi,\Theta}_k$, so the first assertion follows from Theorem~\ref{TopGZip1}. 
If in addition $S$ is locally noetherian, the codimension is well-defined and preserved by~$\zeta$; so the second assertion follows from Theorem~\ref{TopGZip2}. If $\zeta$ is smooth, then $S_\UI^{[\hat w]}$, being the pullback of a smooth stack under a smooth morphism, is smooth as a scheme over~$k$, proving the third assertion.
\end{proof}

Instead of the above construction, the subscheme $S_\UI^{[\hat w]}$ can also be characterized by a construction directly involving~$\UI$. For simplicity we discuss this only in a special case
(but compare Remark \ref{WlogFinite}):

\begin{proposition}\label{DescribeStrata}
Assume that $G$ splits over $k$ and that $\pi_0(\Ghat_k)$ is a constant group scheme. Then a morphism of schemes $f\colon S'\to S$ factors through $S_\UI^{[\hat w]}$ if and only if $f^*\UI$ is locally for the fppf-topology on $S'$ isomorphic to the constant $G$-zip $\UI_{g\dothatw} \times_{\!k} S$ with $\UI_{g\dothatw}$ as in Remark~\ref{StandardGZip}.
\end{proposition}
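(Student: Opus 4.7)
The plan is to transfer the statement across the equivalence $\GhatZip^{\chi,\Theta}_k \cong [E_{\Ghat,\chi,\Theta}\backslash\Ghat_k]$ of Proposition~\ref{GZipStack} and reduce it to a question about a single $E_{\Ghat,\chi,\Theta}$-orbit in $\Ghat_k$. Under the present hypotheses the Galois action on $\leftexp{I}{W}\Omega$ is trivial, and Hilbert~90 for the split torus $T$ ensures that $y$ and $\hat w$ can be lifted to $k$-rational representatives $g\in\Norm_G(T)(k)$ and $\dothatw\in\Norm_{\Ghat}(T)(k)$, so that $g\dothatw\in\Ghat(k)$. Consequently the orbit $\Ghat_k^{[\hat w]}\defeq E_{\Ghat,\chi,\Theta}\cdot g\dothatw$ is a reduced locally closed $k$-subscheme of $\Ghat_k$, and by Theorem~\ref{TopGZip1} together with Remark~\ref{StandardGZip} its image in the quotient stack is exactly the substack $\Upsilon^{[\hat w]}$ representing the point~$[\hat w]$. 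Since $S_\UI^{[\hat w]}$ is by construction the pullback of $\Upsilon^{[\hat w]}$ under the classifying morphism $\zeta\colon S\to[E_{\Ghat,\chi,\Theta}\backslash\Ghat_k]$, the task reduces to showing that $\zeta\circ f$ factors through $[E_{\Ghat,\chi,\Theta}\backslash\Ghat_k^{[\hat w]}]$ if and only if $f^*\UI$ is fppf-locally isomorphic to $\UI_{g\dothatw}\times_k S'$.

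For the forward direction I would apply Lemma~\ref{GZipLocStandard} to reduce \'etale-locally on $S'$ to the case $f^*\UI\cong\UI_{g'}$ for some $g'\in\Ghat(S')$, so that $\zeta\circ f$ becomes the composite $S'\xrightarrow{g'}\Ghat_k\to[E_{\Ghat,\chi,\Theta}\backslash\Ghat_k]$; its factoring through $\Upsilon^{[\hat w]}$ then amounts precisely to $g'$ factoring through $\Ghat_k^{[\hat w]}$. The orbit map $E_{\Ghat,\chi,\Theta}\to\Ghat_k^{[\hat w]}$, $e\mapsto e\cdot g\dothatw$, is a $\Stab_{E_{\Ghat,\chi,\Theta}}(g\dothatw)$-torsor and hence an fppf cover, so fppf-locally on $S'$ one can write $g'=e\cdot g\dothatw$ for some section $e$, and Lemma~\ref{StandardGZipMorphisms} then delivers the required isomorphism $\UI_{g\dothatw}\times_k S'\cong\UI_{g'}\cong f^*\UI$ after this fppf base change. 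Conversely, if $f^*\UI$ is fppf-locally isomorphic to $\UI_{g\dothatw}\times_k S'$, say over a cover $S''\to S'$, then the classifying morphism of the constant $\Ghat$-zip $\UI_{g\dothatw}\times_k S''$ clearly factors through $[E_{\Ghat,\chi,\Theta}\backslash\Ghat_k^{[\hat w]}]$ because $g\dothatw\in\Ghat_k^{[\hat w]}(k)$. Factoring through a locally closed substack is equivalent to the base change of the locally closed immersion $\Upsilon^{[\hat w]}\hookrightarrow[E_{\Ghat,\chi,\Theta}\backslash\Ghat_k]$ along $\zeta\circ f$ being an isomorphism, and this property descends along any fpqc cover, so fppf descent transfers the factorization from $S''$ back to $S'$.

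The main delicacy lies in the preparatory identification of $\Upsilon^{[\hat w]}$ with the quotient stack of the single $k$-rational orbit $E_{\Ghat,\chi,\Theta}\cdot g\dothatw$; this is precisely where the hypotheses that $G$ splits over $k$ and that $\pi_0(\Ghat_k)$ is constant enter, because otherwise the stratum would only be defined over a suitable finite extension of $k$ and one would be forced to consider a Galois orbit of $E$-orbits rather than a single one. Once this identification is secured, the two implications follow cleanly from Lemmas~\ref{GZipLocStandard} and~\ref{StandardGZipMorphisms} combined with standard fppf descent.
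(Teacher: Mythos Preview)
Your proof is correct and follows essentially the same approach as the paper: both reduce to showing that a section $g'$ of $\Ghat_k$ lands in the single $k$-defined orbit $\Ghat_k^{[\hat w]}$ if and only if $\UI_{g'}$ is fppf-locally isomorphic to $\UI_{g\dothatw}$, using that the orbit map $e\mapsto e\cdot g\dothatw$ is fppf together with Lemma~\ref{StandardGZipMorphisms}. The paper streamlines the two directions into one by pulling back along the fppf cover $\Ghat_k\to[E_{\Ghat,\chi,\Theta}\backslash\Ghat_k]$ via a single cartesian diagram rather than first invoking Lemma~\ref{GZipLocStandard}, but your explicit Hilbert~90 argument for the $k$-rationality of $g\dothatw$ actually supplies a detail the paper leaves implicit.
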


\begin{proof}
The assumptions imply that the $E_{\Ghat,\chi,\Theta}$-orbit used in the proof of Theorem \ref{TopGZip2} is really defined over $k$; let us denote it by $\Ghat^{\hat w}_k$. Define $S''$ and $g'$ by the cartesian diagram
$$\xymatrix{
S'' \ar@{->>}[d] \ar[rr]^-{g'} && \Ghat_k \ar@{->>}[d]\\
S' \ar[r]^f & S \ar[r]^-\zeta & [E_{\Ghat,\chi,\Theta}\backslash \Ghat_k] 
\rlap{$\ \cong\ \GhatZip^{\chi,\Theta}_k$} \\}$$
where the vertical morphisms are fppf. Then by the definition of the quotient stack and the construction of $S_\UI^{[\hat w]}$, the morphism $f$ factors through $S_\UI^{[\hat w]}$ if and only if $g'$ factors through $\Ghat^{\hat w}_k$. As the orbit $\Ghat^{\hat w}_k$ is smooth, the morphism $E_{\Ghat,\chi,\Theta} \to \Ghat^{\hat w}_k$, $e\mapsto e\cdot g\dothatw$ is fppf. Thus $g'$ factors through $\Ghat^{\hat w}_k$ if and only if there exists an fppf-covering $S'''\to S''$ and an $e\colon S'''\to E_{\Ghat,\chi,\Theta}$ such that $g' = e\cdot g\dothatw$. By Lemma \ref{StandardGZipMorphisms} the latter condition is equivalent to saying that the $\Ghat$-zip $\UI_{g'}$ is fppf-locally isomorphic to $\UI_{g\dothatw}$, or again that $f^*\UI$ is fppf-locally isomorphic to $\UI_{g\dothatw}$, as desired.
\end{proof}

\begin{remark}\label{Purity1}
In~\cite{Yatsyshyn} Yatsyshyn and the second author show that all $E_{\Ghat,\chi,\Theta}$-orbits in $\Ghat_k$ are affine. This implies that the inclusion into $\GhatZip^{\chi,\Theta}_k$ of the substack associated to $[\hat w]$ is an affine morphism, and so the inclusion $S_\UI^{[\hat w]} \into S$ is an affine morphism. In particular this implies the following purity result:
\end{remark}

\begin{proposition}\label{Purity2}
Let $S$ be a locally noetherian scheme over~$k$, and let $Z$ be a closed subscheme of codimension $\geq 2$. Assume that $Z$ contains no embedded component of~$S$ (which is automatic if $S$ is reduced). Let $\UI$ be a $\Ghat$-zip 
% of type $(\chi,\Theta)$ 
over~$S$ whose restriction  to $S \setminus Z$ is fppf-locally constant. Then $\UI$ is fppf-locally constant.
\end{proposition}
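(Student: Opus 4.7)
By Remark~\ref{WlogFinite} we may enlarge $k$ to a finite extension so that $G_k$ splits and $\pi_0(\Ghat_k)$ is constant; faithfully flat base change preserves all hypotheses. Under these assumptions Proposition~\ref{DescribeStrata} identifies ``$\UI|_U$ is fppf-locally constant'' (for open $U\subseteq S$) with ``each stratum $S_\UI^{[\hat w]}\cap U$ is Zariski-open in~$U$''. The claim being Zariski-local on~$S$, we fix $s\in S$ and show that the stratum $T_0:=S_\UI^{[\hat w_0]}\ni s$ is open at~$s$. For $s\notin Z$ this is the hypothesis; for $s\in Z$, localize at~$s$ to reduce to the case $S=\Spec R$ with $R$ local noetherian and closed point $\mathfrak{m}=s\in Z$. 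Then $\dim R\geq 2$ and the hypotheses force $Z$ to avoid every associated point of~$R$, making $S\setminus Z$ schematically dense in~$S$. It suffices to prove $T_0=S$.

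\textbf{Paragraph 2 (Generic points lie in $T_0$).} We claim every generic point $\eta$ of $S$ lies in~$T_0$. By hypothesis, the strata decompose $S\setminus Z$ into finitely many disjoint open-closed pieces, so each irreducible component $Y_\eta:=\overline{\{\eta\}}$ has its generic part in a single stratum $T_{j(\eta)}$. Assume for contradiction $j(\eta)\neq 0$ for some~$\eta$. If $\dim Y_\eta=1$, then $\codim(Z,S)\geq 2$ forces $Y_\eta\cap Z=\emptyset$, giving $\mathfrak{m}\in Y_\eta\subseteq T_{j(\eta)}$ and contradicting $\mathfrak{m}\in T_0$. So $\dim Y_\eta\geq 2$. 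By Remark~\ref{Purity1}, $T_{j(\eta)}\hookrightarrow S$ is affine, hence so is $T_{j(\eta)}\cap Y_\eta$; being dense and locally closed in the irreducible~$Y_\eta$, it is in fact an affine open of~$Y_\eta$. Its complement is a closed subscheme of $Y_\eta$ contained in $Y_\eta\cap Z$, of codimension $\geq 2$ in~$Y_\eta$, and containing~$\mathfrak{m}$. This contradicts the purity that no noetherian local ring $R'$ of dimension $\geq 2$ admits an affine open in $\Spec R'$ whose complement has codimension $\geq 2$ and contains the closed point, since such affineness would entail the vanishing $H^j_I(R')=0$ for all $j\geq 2$ in defiance of the non-vanishing supplied by Grothendieck's theorem $H^{\dim R'}_{\mathfrak{m}_{R'}}(R')\neq 0$. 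Therefore $j(\eta)=0$ for every generic~$\eta$.

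\textbf{Paragraph 3 (Conclusion).} Hence $T_0$ contains every generic point of $S$, so by the clopen decomposition of $S\setminus Z$ it contains all of $S\setminus Z$. As $T_0$ is a locally closed subscheme of $S$ containing the schematically dense subset $S\setminus Z$, it is Zariski-open in~$S$; and as an open subscheme of the local scheme $S$ containing its unique closed point, $T_0=S$. The main obstacle is the purity statement in the second paragraph, which combines the affineness of stratum inclusions from Remark~\ref{Purity1} with standard non-vanishing results for local cohomology on local noetherian rings of dimension $\geq 2$; the remaining delicate point is to ensure the codim $\geq 2$ condition transfers to irreducible components $Y_\eta$ whenever $Y_\eta\cap Z$ is nonempty, which is handled by the dichotomy argument of the second paragraph.
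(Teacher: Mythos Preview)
Your proof contains the right key ingredient (affineness of the stratum inclusion from Remark~\ref{Purity1} combined with Grothendieck's non-vanishing of top local cohomology), but the execution via irreducible components introduces a gap that is not closed. In Paragraph~2, your Case~1 claim that $\dim Y_\eta=1$ forces $Y_\eta\cap Z=\emptyset$ is false: every irreducible component $Y_\eta$ of the local scheme $\Spec R$ contains the closed point $\mathfrak m$, and you have assumed $\mathfrak m\in Z$, so $\mathfrak m\in Y_\eta\cap Z$ always. (Concretely, take $R=k[x,y,z]_{(x,y,z)}/(xz,yz)$ with $Z=\{\mathfrak m\}$; here $\codim(Z,S)=2$ but the one-dimensional component $V(x,y)$ meets~$Z$.) More generally, the inequality goes the wrong way: for $c\in Y_\eta\cap Z$ one has $\codim_{Y_\eta}(c)\le\codim_S(c)$, so $\codim_S(c)\ge 2$ does not yield $\codim_{Y_\eta}(c)\ge 2$. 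Thus neither branch of your dichotomy actually establishes the needed codimension transfer, and the acknowledgement in Paragraph~3 that this is ``handled by the dichotomy argument'' is not borne out.

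The paper's proof sidesteps this entirely by never passing to components. Under the paper's reading of ``fppf-locally constant'' (a single $[\hat w]$ with $S\setminus Z\subset S_\UI^{[\hat w]}$, via Proposition~\ref{DescribeStrata}), the stratum $S_\UI^{[\hat w]}$ contains the schematically dense open $S\setminus Z$ and, being locally closed, is itself open in~$S$; its complement $Z'$ is contained in $Z$ and hence has $\codim_S(Z')\ge 2$. Now one localizes at a \emph{generic point} $z$ of $Z'$ (not at an arbitrary $s\in Z$ as you do): then $(S_\UI^{[\hat w]})_z=\Spec\CO_{S,z}\setminus\{z\}$ is the punctured spectrum, it is affine because $S_\UI^{[\hat w]}\hookrightarrow S$ is an affine morphism (Remark~\ref{Purity1}), and $\dim\CO_{S,z}=\codim_S(z)\ge 2$. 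Your own local-cohomology argument then gives the contradiction directly, without ever comparing codimensions across components. In short: work with the stratum containing $S\setminus Z$ rather than the stratum containing a chosen closed point, and localize at a generic point of its complement.
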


\begin{proof}
By Proposition~\ref{DescribeStrata} there exists $[\hat{w}]$ such that the open immersion ${S \setminus Z \into S}$ factors through the subscheme $S_\UI^{[\hat w]}$. By assumption $S \setminus Z$ and hence $S_\UI^{[\hat w]}$ is schematically dense in~$S$; being locally closed $S_\UI^{[\hat w]}$ is therefore an open subscheme of~$S$. On the other hand its complement $Z'$ is of codimension $\geq 2$. Since the inclusion $S^{[\hat{w}]}_I \into S$ is affine, this implies that $Z' = \emptyset$.
\end{proof}

%%%%%%%%%%%%%%%%%%%%%%%%%%%%%%%%%%%%%%%%%%%%%%%%%

\subsection{Automorphisms of $G$-zips}
\label{AutoGzipSec}

Let $K$ be an algebraically closed extension field of~$\kbar$, and let $\UI$ be a $\Ghat$-zip of type $(\chi,\Theta)$ over~$K$. Let $T,B,g,\dothatw$ be as in Remark~\ref{StandardGZip}. 
Then $\UI$ is isomorphic to $\UI_{g\dothatw}$ for some $\hat w \in \leftexp{I}{W}\Omega$. Its automorphism group scheme is therefore $\UAut(\UI) \cong \UAut(\UI_{g\dothatw})$. By Proposition \ref{AutoStack1} the latter is isomorphic to the stabilizer $\Stab_{E_{\Ghat,\chi,\Theta,K}}(g\dothatw)$.

Since the results on stabilizers in \cite{PWZ1} were formulated only for connected zip data, we now assume that $\Ghat = G$ is connected. Then $\Theta = \Omega = 1$, and we can write $\hat w=w\in \leftexp{I}{W}$ and $\dothatw=\dot w\in \Norm_G(T)(K)$. As in \cite{PWZ1}, Subsection 5.1, let $H_w$ be the Levi subgroup of $G_K$ containing $T$ whose set of simple reflections is the unique largest subset $K_w$ of $J \cap \leftexp{w^{-1}}{I}$ such that $(\intaut(x) \circ \bar\varphi \circ \intaut(w))(K_w) = K_w$.

\begin{proposition}\label{AutoGZip}
\begin{itemize}
\item[(a)] The identity component of $\UAut(\UI_{g\dot{w}})$ is a unipotent group scheme of dimension $\dim(G/P_+) - \ell(w)$.
\item[(b)] Let $v$ be the unique element of minimal length in the double coset $W_IwW_J$. Then the Lie algebra of $\UAut(\UI_{g\dot{w}})$ has dimension $\dim(G/P_+) - \ell(v)$.
\item[(c)] The group of connected components of $\UAut(\UI_{g\dot{w}})$ is isomorphic to the constant group scheme over $K$ associated to the finite group
$$\Pi := \set{h \in H_w(\kbar)}{h = \varphi(g\dot{w}h(g\dot{w})^{-1})}.$$
\end{itemize}
\end{proposition}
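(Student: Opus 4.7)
The plan is to translate the statement entirely into a question about stabilizers in the zip group and then invoke the detailed stabilizer description from \cite{PWZ1}. By Proposition~\ref{GZipStack} the stack $\GhatZip^{\chi,\Theta}_k$ is isomorphic to $[E_{G,\chi}\backslash G_k]$, and Construction~\ref{GZipCons} realizes $\UI_{g\dot{w}}$ as the $G$-zip associated to the section $g\dot{w}\in G(K)$. Applying Proposition~\ref{AutoStack1} with $H=E_{G,\chi}$ acting on $G_k$ by $(p_+,p_-)\cdot x = p_+xp_-^{-1}$ therefore yields a natural isomorphism
\[
\UAut(\UI_{g\dot{w}}) \ \cong\ \Stab_{E_{G,\chi,K}}(g\dot{w}).
\]
Under this isomorphism, all three assertions become statements purely about the point stabilizer of $g\dot{w}$ for the action of the zip group.

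Next, I would invoke the results of \cite{PWZ1} about the frame $(B,T,g)$ of the connected zip datum $(G_K,P_{+,K},P_{-,K}^{(q)},\varphi)$ recalled in Remark~\ref{StandardGZip}. By \cite{PWZ1} Theorem~8.1 (and the structural results in \cite{PWZ1} Sections 5--8, in particular Theorem~5.11), the stabilizer of the frame representative $g\dot{w}$ under the zip-group action has an explicit description in terms of the Levi subgroup $H_w$ and the twisted Frobenius $h\mapsto \varphi(g\dot{w}h(g\dot{w})^{-1})$: its identity component is a connected unipotent subgroup of $E_{G,\chi,K}$ with the stated dimension, and its finite group of components is exactly $\Pi$. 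More precisely, \cite{PWZ1} Theorem~5.11 gives the dimension $\dim(G/P_+) - \ell(w)$ of the corresponding $E_{G,\chi,K}$-orbit, so by the orbit--stabilizer formula (both $E_{G,\chi,K}$ and $G_K$ are smooth of the same dimension) the identity component of the stabilizer has dimension $\dim(G/P_+) - \ell(w)$; unipotence follows from the explicit description in \cite{PWZ1} Section~8. This yields (a) and (c) directly.

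For (b), the key point is that the Lie algebra of the stabilizer, unlike the stabilizer itself, need not have the same dimension as the identity component when the group scheme fails to be smooth. The dimension of $\Lie\Stab_{E_{G,\chi,K}}(g\dot{w})$ coincides with the codimension of the orbit through $g\dot{w}$ inside the \emph{scheme-theoretic} image of the orbit map, and by \cite{PWZ1} Section~5 (relating $w\in{}^IW$ to its minimal double coset representative $v\in{}^IW^J\cap W_IwW_J$) this codimension is controlled by $\ell(v)$ rather than $\ell(w)$. Concretely, the orbit map $E_{G,\chi,K}\to G_K$, $e\mapsto e\cdot g\dot{w}$, has image of dimension $\dim E_{G,\chi,K}-\dim\Lie\Stab$, and the reference identifies this with $\ell(v)$ via the Bruhat-type decomposition in \cite{PWZ1} (see the discussion of the map sending an element of $\leftexp{I}{W}$ to the double coset it represents).

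The only non-routine step is making sure the dictionary between the proposition and the results of \cite{PWZ1} is correctly set up, i.e.\ that the frame chosen in Remark~\ref{StandardGZip} is the one used in \cite{PWZ1} and that the element $g\dot{w}$ indeed lies in the normalized orbit representative used there. Once this is checked (which is exactly the content of \cite{PWZ1} Lemma~12.11 cited in Remark~\ref{StandardGZip}), parts (a), (b), (c) follow by direct quotation of the corresponding assertions from \cite{PWZ1} Theorem~8.1 and its proof. The main obstacle is therefore bookkeeping of frames and representatives rather than any genuinely new argument.
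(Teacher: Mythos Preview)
Your reduction to the stabilizer $\Stab_{E_{G,\chi,K}}(g\dot w)$ and appeal to \cite{PWZ1} is exactly the paper's approach. The paper's citations differ slightly from yours: it invokes \cite{PWZ1} Theorem~8.1 for the semi-direct product decomposition $\Stab \cong \Pi \ltimes U$ with $U$ connected unipotent (giving (c) and the unipotence in (a) simultaneously), \cite{PWZ1} Proposition~7.1 for the finiteness of $\Pi$ via orbital finiteness, and \cite{PWZ1} Theorem~7.5 (not 5.11) for the orbit dimension $\dim P_+ + \ell(w)$, after which $\dim E_{G,\chi} = \dim G$ finishes (a). For (b) it simply cites \cite{PWZ1} Theorem~8.5.

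Your heuristic for (b), however, is garbled and would not stand as written. The claim that ``the orbit map $E_{G,\chi,K}\to G_K$ has image of dimension $\dim E_{G,\chi,K}-\dim\Lie\Stab$'' is false: the image (the orbit) has dimension $\dim E_{G,\chi,K} - \dim\Stab$, which recovers $\ell(w)$, not $\ell(v)$. What is governed by $\dim\Lie\Stab$ is the rank of the \emph{differential} of the orbit map at the identity, i.e.\ the dimension of the Zariski tangent space of the orbit at $g\dot w$; it is this tangent-space dimension that \cite{PWZ1} Theorem~8.5 computes. The discrepancy $\ell(w) - \ell(v)$ measures the failure of the stabilizer to be smooth (cf.\ Remark~\ref{AutoGZip1}), not anything about a ``scheme-theoretic image''. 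So for (b) drop the heuristic and cite \cite{PWZ1} Theorem~8.5 directly, as the paper does.
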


\begin{proof}
By \cite{PWZ1} Theorem~8.1 the group $A := \UAut(\UI_{g\dot w}) \cong \Stab_{E_{G,\chi,\Theta,K}}(g\dot w)$ is isomorphic to a semi-direct product of the group $\Pi$ in (c) with a connected unipotent group scheme~$U$. As the zip datum is orbitally finite, the group $\Pi$ is finite by \cite{PWZ1}~Proposition~7.1. This shows~(c) and that the identity component of $A$ is unipotent. Moreover, the orbit $o(g\dot{w}) \subset G_K$ of $g\dot{w}$ has dimension $\dim P_+ + \ell(w)$ by \cite{PWZ1}~Theorem~7.5. As the definition of $E_{G,\chi,\Theta}$ implies that $\dim E_{G,\chi,\Theta}=\dim G$, it follows that 
$$\dim A\ =\ 
\dim E_{G,\chi,\Theta,K} - \dim o(g\dot{w}) = \dim G - \dim(P_+) - \ell(w),$$
proving~(a). Assertion~(b) follows from \cite{PWZ1}~Theorem~8.5.
\end{proof}

\begin{remark}\label{AutoGZip1}
Since a group scheme is smooth if and only if its dimension is equal to the dimension of its Lie algebra, Proposition \ref{AutoGZip} (a)~and~(b) imply that $\UAut(\UI_{g\dot w})$ is smooth if and only if $w$ is of minimal length in its double coset $W_IwW_J$. This condition will often not be satisfied.
\end{remark}

%%%%%%%%%%%%%%%%%%%%%%%%%%%%%%%%%%%%%%%%%%%%%%%%%

%\newpage

\section{Generalities on filtrations}
\label{Prelim}

In this section we briefly review some standard definitions and notations for gradings and filtrations of locally free sheaves of finite rank.

%%%%%%%%%%%%%%%%%%%%%%%%%%%%%%%%%%%%%%%%%%%%%%%%%

\subsection{Locally free sheaves of finite rank}
\label{PreLocFree}

Let $S$ be a scheme over a ring~$k$. The category of locally free sheaves of $\CO_S$-modules of finite rank on $S$ with all $\CO_S$-linear homomorphisms between them is denoted $\LocFree(S)$. It is a $k$-linear additive category, but in general not abelian. A homomorphism in $\LocFree(S)$ is called \emph{admissible} 
if its image in the category of sheaves of finite rank is a locally direct summand.
% if its cokernel in the category of sheaves of finite rank is locally free. (Then its kernel is locally free, too.) 
This notion turns $\LocFree(S)$ into an exact category in the sense of Quillen. It is also idempotent complete, i.e., any endomorphism $f\colon\CM\to\CM$ in $\LocFree(S)$ satisfying $f^2=f$ is admissible and corresponds to a direct sum decomposition $\CM = \Ker(f) \oplus \Image(f)$ within $\LocFree(S)$.
% In particular, the category is pseudo-abelian.

For a useful overview of exact categories see B\"uhler \cite{Buehler}. Every admissible homo\-mor\-phism in an exact category has a kernel and a cokernel, and they satisfy a number of axioms: see \cite{Buehler}. An additive functor between exact categories is exact if it sends admissible homomorphisms to admissible homomorphisms and commutes with their kernels and cokernels.

Endowed with the usual tensor product of sheaves of finite rank $\CM\otimes\CN$, the usual dual $\CM^\vee \defeq \CHom(\CM,\CO_S)$, and the usual associativity and commutativity constraints $\LocFree(S)$ is a rigid tensor category in the sense of Saavedra \cite{Saavedra} I.5.1. Moreover, the tensor product and the dual define exact functors in the indicated sense.

%%%%%%%%%%%%%%%%%%%%%%%%%%%%%%%%%%%%%%%%%%%%%%%%%

\subsection{Gradings}
\label{PreGrad}

By a \emph{graded locally free sheaf of finite rank on $S$} we mean a locally free sheaf of finite rank $\CM$ together with a decomposition $\CM = \bigoplus_{i\in\BZ}\CM^i$, whose \emph{graded pieces} $\CM^i$ vanish for almost all~$i$. A \emph{homomorphism} of graded locally free sheaves of finite rank $f\colon\CM\to\CN$ is a homomorphism of the underlying sheaves that satisfies $f(\CM^i)\subset \CN^i$ for all $i\in\BZ$. The category of graded locally free sheaves of finite rank on $S$ is denoted $\GradLocFree(S)$. It is a $k$-linear additive category, but in general not abelian. 

A homomorphism in $\GradLocFree(S)$ is \emph{admissible} if it is admissible in each degree. This turns $\GradLocFree(S)$ into an exact category that is idempotent complete.

The \emph{tensor product} of graded locally free sheaves of finite rank is the usual tensor product of sheaves with the grading $(\CM\otimes\CN)^i \defeq \bigoplus_{j\in\BZ} \; \CM^j \otimes \CN^{i-j}$. The \emph{dual} of a graded locally free sheaf of finite rank $\CM$ is the usual dual sheaf with the grading $(\CM^\vee)^i \defeq (\CM^{-i})^\vee$. These notions turn $\GradLocFree(S)$ into a rigid tensor category together with a (forgetful) exact tensor functor $\forget\colon\GradLocFree(S) \to \LocFree(S)$ sending graded locally free sheaves to their underlying locally free sheaves.

%%%%%%%%%%%%%%%%%%%%%%%%%%%%%%%%%%%%%%%%%%%%%%%%%

\subsection{Descending filtrations}
\label{PreDescFilt}

By a \emph{descending filtration} $C^\bullet$ of a locally free sheaf of finite rank $\CM$ on~$S$ we mean a family of quasi-coherent subsheaves $C^i\CM$ for $i\in\BZ$, which are locally direct summands and satisfy $C^{i+1}\CM\subset C^i\CM$ for all~$i$ and $C^i\CM=0$ for all $i\gg0$ and $C^i\CM=\CM$ for all $i\ll0$. A homomorphism of sheaves of finite rank $f\colon\CM\to\CN$ endowed with a descending filtration is \emph{compatible with the filtrations} if it satisfies $f(C^i\CM)\subset C^i\CN$ for all $i\in\BZ$. This defines a category of locally free sheaves of finite rank on $S$ endowed with a descending filtration, which we denote $\DescFilLocFree(S)$. It is a $k$-linear additive category, but in general not abelian. It possesses an evident forgetful functor $\forget\colon\DescFilLocFree(S) \to \LocFree(S)$.

The assumptions imply that the subquotients $\gr_C^i\CM := C^i\CM/C^{i+1}\CM$ are again locally free sheaves of finite rank on~$S$ which vanish for almost all~$i$. Also, any homomorphism $f\colon\CM\to\CN$ in $\DescFilLocFree(S)$ induces natural homomorphisms $\gr^i_Cf\colon \gr_C^i\CM\to\gr_C^i\CN$. Together this defines a natural functor $\gr_C^\bullet\colon \DescFilLocFree(S) \to \GradLocFree(S)$.

Reciprocally, any graded locally free sheaf of finite rank $\CM$ carries a natural descending filtration $C^i\CM \defeq \bigoplus_{j\ge i}\CM^j$, which defines a natural functor $\descfil\colon\GradLocFree(S) \to \DescFilLocFree(S)$.

%A homomorphism $f\colon\CM\to\CN$ in $\DescFilLocFree(S)$ is called an \emph{admissible monomorphism} if its underlying homomorphism of sheaves of finite rank is an isomorphism from $\CM$ to a locally direct summand of $\CN$ satisfying $C^i\CM = f^{-1}(C^i\CN)$ for all $i\in\BZ$. It is called an \emph{admissible epimorphism} if its underlying homomorphism of sheaves of finite rank is an epimorphism (and hence locally split, because $\CN$ is locally free) satisfying $C^i\CN = f(C^i\CM)$ for all $i\in\BZ$. An arbitrary homomorphism in $\DescFilLocFree(S)$ is called \emph{admissible} if it can be written as the composite of an admissible epimorphism followed by an admissible monomorphism. 
A homomorphism $f\colon\CM\to\CN$ in $\DescFilLocFree(S)$ is called \emph{admissible} if for all $i$ the sheaf $f(C^i\CM)$ is equal to $f(\CM)\cap C^i\CN$ and a locally direct summand of~$\CN$. This is equivalent to saying that locally on $S$, the morphism possesses a factorization of the form $\CM\cong\CM'\oplus\CL\onto\CL\into\CL\oplus\CN'\cong\CN$ in the category of filtered locally free sheaves of finite rank. With this notion $\DescFilLocFree(S)$ is an exact category that is idempotent complete.

Descending filtrations of $\CM$ and $\CN$ induce a natural descending filtration of $\CM\otimes\CN$ by the formula $C^i(\CM\otimes\CN) \defeq \sum_{j\in\BZ} C^j\CM \otimes C^{i-j}\CN$. The graded subquotients inherit natural isomorphisms $\gr_C^i(\CM\otimes\CN) \ \cong\ \bigoplus_{j\in\BZ} \; \gr_C^j\CM \otimes \gr_C^{i-j}\CN$. Moreover, a descending filtration of $\CM$ induces a descending filtration of $\CM^\vee$ by the formula $C^i(\CM^\vee) \defeq (\CM/C^{1-i}\CM)^\vee$, and the graded subquotients possess natural isomorphisms $\gr_C^i(\CM^\vee) \ \cong\ (\gr_C^{-i}\CM)^\vee$. These notions turn $\DescFilLocFree(S)$ into a rigid tensor category, such that all three functors above are tensor functors. These functors, as well as tensor product and dual, are also exact.

%%%%%%%%%%%%%%%%%%%%%%%%%%%%%%%%%%%%%%%%%%%%%%%%%

\subsection{Ascending filtrations}
\label{PreAscFilt}

An \emph{ascending filtration} $D_\bullet$ of $\CM$ is a family of subsheaves $D_i\CM$ such that the $D_{-i}\CM$ form a descending filtration of~$\CM$. Thus everything in Subsection \ref{PreDescFilt} has a direct analogue for ascending filtrations. Descending filtrations are generally indexed by upper indices, ascending filtrations by lower indices, while gradings can be indexed in both fashions. In particular the graded subquotients of an ascending filtration are denoted $\gr^D_i\CM := D_i\CM/D_{i-1}\CM$. The category of locally free sheaves of $\CO_S$-modules with an ascending filtration is denoted $\AscFilLocFree(S)$. There are natural exact tensor functors $\gr_\bullet^D\colon \AscFilLocFree(S)\to \GradLocFree(S)$ and $\ascfil\colon\GradLocFree(S)\to\AscFilLocFree(S)$ and $\forget\colon \AscFilLocFree(S)\to \LocFree(S)$.
% $D_i(\CM^\vee) \defeq (\CM/D_{-1-i}\CM)^\vee$.
The functors introduced so far are summarized in the following diagram:
\UseTheoremCounterForNextEquation
\begin{equation}
\label{FilSummaryDiagram}
\vcenter{\xymatrix{
    & \DescFilLocFree(S) \ar@<-.5ex>[ld]_{\gr^\bullet_C} \ar[rd]^\forget & \\
    \GradLocFree(S) \ar@<-.5ex>[ru]_\descfil \ar[rr]^\forget \ar@<.5ex>[rd]^\ascfil &&\LocFree(S)\\
    & \AscFilLocFree(S) \ar@<.5ex>[lu]^{\gr_\bullet^D} \ar[ru]^\forget &
  }}
\end{equation}

%%%%%%%%%%%%%%%%%%%%%%%%%%%%%%%%%%%%%%%%%%%%%%%%%

\subsection{Types}
\label{PreTypes}

Let $\underline{n} = (n_i)_{i\in\BZ}$ be a family of non-negative integers which vanish for almost all~$i$. We say that a graded locally free sheaf of finite rank $\CM$ is \emph{of type $\underline{n}$} if each $\CM^i$ is locally free of constant rank~$n_i$. We call a locally free sheaf of finite rank endowed with a descending or ascending filtration of type $\underline{n}$ if its associated graded sheaf is of type~$\underline{n}$. In all these cases, the sheaf itself is then locally free of constant rank $\sum_in_i$. If $S$ is connected (and hence non-empty!), every graded or filtered locally free sheaf of finite rank on $S$ possesses a unique type.

%%%%%%%%%%%%%%%%%%%%%%%%%%%%%%%%%%%%%%%%%%%%%%%%%

\subsection{Pullback}
\label{FilPull}

All the above notions possess evident pullbacks under a morphism $S'\to S$, which are compatible with all the given constructions. We generally denote the pullback of $f\colon\CM\to\CN$ by $f_{S'}\colon\CM_{S'}\to\CN_{S'}$. This defines an exact tensor functor $\DescFilLocFree(S) \to \DescFilLocFree(S')$ and similar functors on the other categories.

Many properties and invariants such as the rank of a locally free sheaf of finite rank are local for the fpqc topology. In particular:

\begin{lemma}
  \label{LocallyAdmissible1}
For a homomorphism of graded, filtered, or naked locally free sheaves of finite rank, the property of being admissible is local for the fpqc topology.
\end{lemma}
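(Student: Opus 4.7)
The plan is to reduce each of the three cases (naked, graded, filtered) to the standard fact that, for a quasi-coherent sheaf on~$S$, the property of being locally free of finite rank is local for the fpqc topology (faithfully flat descent, e.g.\ EGA~IV, Prop.~2.5.2).

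For naked locally free sheaves, I would use the characterization that a homomorphism $f\colon\CM\to\CN$ in $\LocFree(S)$ is admissible if and only if $\Coker(f)$ is locally free of finite rank. One direction follows from the explicit local factorization of an admissible morphism, and the other from the fact that a short exact sequence of locally free sheaves of finite rank splits Zariski-locally. Now given an fpqc morphism $g\colon S'\to S$ under which $f_{S'}$ is admissible, flatness of $g$ gives $g^{*}\Coker(f)=\Coker(f_{S'})$, which is locally free of finite rank; faithfully flat descent then forces $\Coker(f)$ to be locally free of finite rank on~$S$, establishing admissibility of~$f$.

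For graded locally free sheaves, admissibility is by definition admissibility in each degree, so the claim reduces immediately to the naked case applied to each $f^{i}\colon\CM^{i}\to\CN^{i}$. For descending filtered locally free sheaves, I would unpack the definition as given in Subsection~\ref{PreDescFilt}: $f$ is admissible if and only if for every~$i$ one has (a)~$f(C^{i}\CM)=f(\CM)\cap C^{i}\CN$ inside~$\CN$, and (b)~the subsheaf $f(C^{i}\CM)$ is a local direct summand of~$\CN$, equivalently $\CN/f(C^{i}\CM)$ is locally free of finite rank. Property~(b) descends by the naked case. For~(a), since $g$ is flat, pullback commutes with the formation of images and intersections of quasi-coherent subsheaves, so $f_{S'}(C^{i}\CM_{S'})=f_{S'}(\CM_{S'})\cap C^{i}\CN_{S'}$ is the $g^{*}$-pullback of the desired equality on~$S$; faithful flatness of $g$ reflects such equalities, so~(a) descends. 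The ascending filtered case is entirely symmetric.

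I do not anticipate any substantive obstacle. The only ingredients are the commutation of flat pullback with the formation of images, cokernels, and intersections of quasi-coherent subsheaves, together with the fpqc descent of the property ``locally free of finite rank.'' Everything else is bookkeeping across the three flavours of admissibility.
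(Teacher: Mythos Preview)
Your proposal is correct and follows essentially the same approach as the paper's proof: both reduce the naked case to the fpqc descent of the property ``locally free of finite rank'' via the characterization that $f(\CM)$ is a local direct summand if and only if $\CN/f(\CM)$ is locally free, and both handle the filtered case by noting separately that the equality $f(C^i\CM)=f(\CM)\cap C^i\CN$ and the direct-summand condition each commute with flat pullback and descend by faithful flatness.
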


\begin{proof}
The subsheaf $f(\CM)\subset\CN$ is a locally direct summand if and only if the quotient $\CN/f(\CM)$ is locally free. Since the latter property is local for the fpqc topology, so is the former, and the lemma follows for naked and graded locally free sheaves of finite rank. For filtered ones observe that the formation of $f(C^i\CM)$ and $f(\CM)\cap C^i\CN$ commutes with flat pullback and their equality is local for the fpqc topology. By the same argument as before their being a locally direct summand is local for the fpqc topology, too, and so the lemma follows in the filtered case.
\end{proof}

%%%%%%%%%%%%%%%%%%%%%%%%%%%%%%%%%%%%%%%%%%%%%%%%%

\subsection{Alternating and symmetric powers}
\label{Powers}

Consider an object $X$ of an exact additive tensor category $\CC$. For any integer $m\ge0$  let $X^{\otimes m}$ denote the tensor product of $m$ copies of $X$ with itself, which carries a natural action of the symmetric group $S_m$. Thus there is a homomorphism
\UseTheoremCounterForNextEquation
\begin{equation}
\label{AltmXMorph}
A_m(X)\colon\ 
X^{\otimes m} \longto X^{\otimes m}, \quad
x\mapsto \sum_{\sigma\in S_m} \sgn(\sigma)\cdot\sigma(x).
\end{equation}
If this homomorphism is admissible, its image $\Alt^mX \defeq \Image A_m(X)$ is called the \emph{$m$-th alternating, or exterior, power of~$X$}. Likewise, there is a homomorphism
\UseTheoremCounterForNextEquation
\begin{equation}
\label{SymmXMorph}
B_m(X)\colon\ 
\bigoplus_{\sigma\in S_m} X^{\otimes m} \longto X^{\otimes m}, \quad
(x_\sigma)_\sigma \mapsto \sum_{\sigma\in S_m}(\sigma-1)(x_\sigma).
\end{equation}
If this homomorphism is admissible, its cokernel $\Sym^mX \defeq \Coker B_m(X)$ is called the \emph{$m$-th symmetric power of~$X$}.
% Thus they come with admissible epimorphisms $X^{\otimes m}\onto \Sym^mX$ and $X^{\otimes m}\onto \Alt^mX$.

For any morphism $f\colon X\to Y$ in $\CC$ the above constructions are compatible with the induced morphism $f^{\otimes m}\colon X^{\otimes m} \to Y^{\otimes m}$; hence they induce morphisms $\Alt^mf\colon \allowbreak \Alt^mX \to \Alt^mY$ and $\Sym^mf\colon \allowbreak \Sym^mX \to \Sym^mY$ whenever the respective powers exist. Evidently this sends the identity on $X$ to the identity on $\Alt^mX$ and $\Sym^mX$ and commutes with composition, i.e., it is functorial in~$X$.

As a direct consequence of this construction, alternating and symmetric powers commute with any exact tensor functor between exact additive tensor categories $F\colon\CC\to\CD$. More precisely, if $\Alt^mX$ exists, then $\Alt^mF(X)$ exists and is canonically isomorphic to $F(\Alt^mX)$, and similarly for $\Sym^m$.

In each of the categories $\LocFree(S)$, $\GradLocFree(S)$, $\DescFilLocFree(S)$, and $\AscFilLocFree(S)$ above, all alternating and symmetric powers exist and have the usual local descriptions, essentially because every object is Zariski locally on $S$ a direct sum of objects of rank~$1$. Also, the $m$-th alternating power of an object of constant rank $n$ has constant rank $\binom{n}{m}$, and the $m$-th symmetric power of an object of constant rank $n$ has constant rank $\binom{n+m-1}{m}$. In particular, the $n$-th exterior power of an object $X$ of constant rank $n$ is an object of constant rank $1$, also called the \emph{highest exterior power of~$X$}.

%%%%%%%%%%%%%%%%%%%%%%%%%%%%%%%%%%%%%%%%%%%%%%%%%

%\newpage
%%%%%%%%%%%%%%%%%%%%%%%%%%%%%%%%%%%%%%%%%%%%%%%%%

\section{Filtered fiber functors and cocharacters}
\label{FFFAC}

%%%%%%%%%%%%%%%%%%%%%%%%%%%%%%%%%%%%%%%%%%%%%%%%%

Let $\Ghat$ be a (not necessarily connected) linear algebraic group over an arbitrary field~$k_0$, and let $\GhRep$ denote the tensor category of finite dimensional representations of $\Ghat$ over~$k_0$. 
As usual, by a \emph{fiber functor} over a scheme $S$ over $k_0$ we mean an exact $k_0$-linear tensor functor $\GhRep\to\LocFree(S)$. Similarly, by a \emph{graded fiber functor} we mean an exact $k_0$-linear tensor functor $\GhRep\to\GradLocFree(S)$, and by a \emph{filtered fiber functor} an exact $k_0$-linear tensor functor $\GhRep\to\DescFilLocFree(S)$. 
In this section we collect some results from \cite{Saavedra} and \cite{ZieglerFFF} on graded and filtered fiber functors. We consider only descending filtrations; the corresponding results for ascending filtrations follow directly by renumbering.

\medskip
For any graded fiber functor $\gamma\colon \GhRep\to\GradLocFree(S)$ and any morphism $S'\to S$ we let $\gamma_{S'}\colon \GhRep\to\GradLocFree(S')$ denote the graded fiber functor obtained by pullback. We call two graded fiber functors $\gamma_1$, $\gamma_2\colon \GhRep\to\GradLocFree(S)$ fpqc-locally isomorphic if their pullbacks under some fpqc morphism $S'\to S$ are isomorphic. In general we let $\UIsom^\otimes(\gamma_1,\gamma_2)$ denote the fpqc-sheaf on $(({\Sch}/S))$ sending $S'\to\nobreak S$ to the set of isomorphisms $\gamma_{1,S'}\stackrel{\sim}{\to}\gamma_{2,S'}$. By composition of isomorphisms it carries a natural right action of the sheaf of groups $\UAut^\otimes(\gamma_1) \defeq \UIsom^\otimes(\gamma_1,\gamma_1)$.
The same notation will be used for filtered fiber functors $\psi$, $\psi_1$, $\psi_2\colon \GhRep \to \DescFilLocFree(S)$.

\medskip
We first consider the special case that $S=\Spec k$ for an overfield $k$ of~$k_0$. Since a locally free sheaf of finite rank on $\Spec k$ is just a finite dimensional $k$-vector space, we abbreviate 
\begin{eqnarray*}
\Vec(k)        &\!\! \defeq & \LocFree(\Spec k), \\
\GradVec(k)    &\!\! \defeq & \GradLocFree(\Spec k), \\
\DescFilVec(k) &\!\! \defeq & \DescFilLocFree(\Spec k).
\end{eqnarray*}
Let  $\omega_{0,k}\colon \GhRep\to \Vec(k)$ denote the tautological fiber functor that sends each representation $V$ to the vector space $V_k := V\otimes_{k_0}k$.

Consider a cocharacter $\chi\colon\BG_{m,k}\to \Ghat_{k}$. Let $\hat L$ denote its centralizer in~$\hat G_{k}$, let $U$ denote the unique connected smooth unipotent subgroup of $\Ghat_k$ that is normalized by $\hat L$ and whose Lie algebra is the sum of the weight spaces of weights $>0$ under $\Ad\circ\chi$, and set $\hat P \defeq \hat L\ltimes U$. 
% As usual let $G$ be the identity component of $\Ghat$. 
% let $L$ be the identity component of $\hat L$ and 
(If the identity component of $\Ghat$ is reductive, the identity component of $\hat P$ is a parabolic subgroup $P$ and the identity component of~$\Lhat$ is a Levi subgroup of $P$.) 

For any representation $V \in\GhRep$, the cocharacter $\chi$ determines a grading $V_k = \bigoplus_{i\in\BZ}V^i_{k}$. This grading is $k_0$-linearly functorial in $V$, exact in short exact sequences, 
% in the sense of Subsection \ref{PreLocFree} 
and compatible with tensor product, and the same holds for the associated descending filtration. Thus $\chi$ induces a graded fiber functor $\gamma_\chi$
% \colon \GhRep \to \GradVec(k)$ 
and a filtered fiber functor $\descfil\circ\gamma_\chi$ such that the composite
$$\xymatrix{
\GhRep \ar[r]^-{\gamma_\chi} & \GradVec(k)
\ar[r]^-{\descfil} & \DescFilVec(k)
\ar[r]^-{\forget} & \Vec(k)}$$
is equal to $\omega_{0,k}$.
%\begin{eqnarray*}
%\gamma_\chi\colon &&\hskip-17pt \GhRep \longto \GradVec(k), \\
%\descfil\!\circ\!\gamma_\chi\colon &&\hskip-17pt \GhRep \longto \DescFilVec(k),
%\end{eqnarray*}
%satisfying $\forget\circ\gamma_\chi=\omega_{0,k}$. 

\begin{proposition} \label{AutOmegaChiIsP}
\begin{enumerate}
\item[(a)] The action of $\hat L$ on $\gamma_\chi$ induces a natural isomorphism
  \begin{equation*}
    \hat L\isoto \UAut^\otimes(\gamma_\chi).
  \end{equation*}
\item[(b)] The action of $\hat P$ on $\descfil\!\circ\!\gamma_\chi$ induces a natural isomorphism
\begin{equation*}
  \hat P\isoto \UAut^\otimes(\descfil\!\circ\!\gamma_\chi). 
\end{equation*}
\end{enumerate}
\end{proposition}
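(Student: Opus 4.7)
The plan is to reduce both statements to the classical Tannakian reconstruction theorem applied to the fiber functor $\omega_{0,k}$. Namely, for any $k$-algebra~$R$, one has a natural identification
\[
\UAut^\otimes(\omega_{0,k})(R)\ \cong\ \Ghat(R),
\]
where an element $g$ acts on $V_R = V\otimes_{k_0}R$ via the representation of~$\Ghat$. Composing with the forgetful functors in diagram \eqref{FilSummaryDiagram}, the natural maps in question fit into commutative diagrams
\[
\hat L\into \UAut^\otimes(\gamma_\chi)\to \UAut^\otimes(\omega_{0,k}) \cong \Ghat_k
\quad\hbox{and}\quad
\hat P\into \UAut^\otimes(\descfil\!\circ\!\gamma_\chi)\to \UAut^\otimes(\omega_{0,k})\cong \Ghat_k,
\]
both of whose composites are the respective inclusions. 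This already shows injectivity, and reduces the proposition to determining the image in $\Ghat_k$ of each middle sheaf.

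For part~(a), I would first verify that every element of $\hat L$ indeed preserves the grading $V_k=\bigoplus_i V^i_k$ on every $V\in\GhRep$, which is immediate since $\hat L$ commutes with~$\chi$. For the converse, suppose $g\in\Ghat(R)$ acts on each $V_R$ preserving the $\chi$-grading. This means that $g$ commutes with the $\BG_{m,R}$-action through $\chi$ on every finite dimensional representation of~$\Ghat$. Since $\GhRep$ admits a faithful representation and these representations separate points of $\Ghat_k$ (this is where one uses that $\Ghat$ is linear algebraic over~$k_0$), it follows that $g$ commutes with the image of $\chi$ in~$\Ghat_k$, i.e.\ $g\in \hat L(R)$, proving (a).

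For part~(b), the inclusion $\hat P\subset\UAut^\otimes(\descfil\!\circ\!\gamma_\chi)$ is again clear, because elements of $\hat L$ preserve even the grading, and elements of $U$ strictly raise $\chi$-weights and so preserve the associated descending filtration. The key step is the converse, which is the dynamical characterization of the parabolic attached to a cocharacter: an element $g\in\Ghat(R)$ preserves the descending filtration $C^iV_R$ on every $V\in\GhRep$ if and only if the limit $\lim_{t\to0}\chi(t)g\chi(t)^{-1}$ exists in $\Ghat_R$, and this is a well-known characterization of $\hat P(R)$. The main obstacle in this step is to justify that Tannakian duality, which is classically stated over a field, actually gives the identification $\UAut^\otimes(\omega_{0,k})\cong\Ghat_k$ as fpqc-sheaves of groups on $(({\Sch}/\Spec k))$, and similarly for the subsheaves. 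Since this is precisely what is established in \cite{Saavedra} and more systematically in \cite{ZieglerFFF}, which also directly prove the two isomorphisms claimed here, in practice the whole proposition would be referenced to those sources rather than reproved.
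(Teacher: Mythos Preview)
Your proposal is correct, and it lands exactly where the paper does: the paper's entire proof consists of citing \cite[Corollary~3.7]{ZieglerFFF} for~(a) and \cite[2.1.4, 2.1.5]{Saavedra} for~(b). Your sketch of the underlying arguments---reducing to $\UAut^\otimes(\omega_{0,k})\cong\Ghat_k$ via Tannakian duality, then identifying the stabilizer of the grading as the centralizer of~$\chi$ and the stabilizer of the filtration via the dynamical characterization of $\hat P$---is sound and is precisely what those references establish, so your final remark that one would simply cite them is on the mark.
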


\begin{proof}
Part (a) is \cite[Corollary 3.7]{ZieglerFFF}. 
Part (b) is a consequence of \cite[2.1.4 and 2.1.5]{Saavedra}.
\end{proof}

%%%%%%%%%%%%%%%%%%%%%%%%%%%%%%%%%%%%%%%%%%%%%%%%%

Now let $\bar k_0$ be an algebraic closure of $k_0$, and let $k_0^s$ denote the separable closure of $k_0$ in~$\bar k_0$.  Let $\CC_{\hat G}$ denote the set of $\hat G(\bar k_0)$-conjugacy classes of cocharacters $\BG_{m,\bar k_0}\to\nobreak \hat G_{\bar k_0}$. The Galois group $\Gal(k_0^s/k_0)\cong \Aut(\bar k_0/k_0)$ acts naturally on $\CC_{\hat G}$. For any $c\in \CC_{\hat G}$ we let $k_c\subset k_0^s$ denote the fixed field of the stabilizer of $c$ in $\Gal(k_0^s/k_0)$. The fact that $c$ contains a cocharacter which is defined over a finite separable extension of $k_0$ implies that $k_c$ is finite separable over $k_0$.

\begin{definition} 
We call $k_c$ the \emph{field of definition} of the conjugacy class $c$.
\end{definition}

Next observe that conjugate cocharacters $\chi$, $\chi'$ give rise to isomorphic functors $\gamma_\chi$,~$\gamma_\chi'$,
% By \cite[Cor. 2.7]{ZieglerFFF} ???
so the following definition depends only on the conjugacy class of~$\chi$.

\begin{definition}
\label{DefTypeChi}
Let $c\in \CC_{\hat G}$ and let $S$ be a scheme over $k_c$. A graded fiber functor $\gamma \colon\GhRep\to\GradLocFree(S)$ is called \emph{of type $c$}, or \emph{of type $\chi$} for any $\chi\in c$, if the pullbacks of the functors $\gamma$ and $\gamma_\chi$ to $S\times_{k_c} \bar k_0$ are fpqc-locally isomorphic.
A filtered fiber functor $\psi \colon\GhRep\to\DescFilLocFree(S)$ is called \emph{of type $c$} if the associated graded fiber functor $\descgr \circ\psi\colon\GRep\to\GradLocFree(S)$ is of type~$c$.
\end{definition}

%For the next result we call a smooth connected group scheme over $k$ \emph{quasi-split} if there exists a Borel subgroup defined over $k$.

\begin{theorem}[{\cite[Theorem 3.25]{ZieglerFFF}}]
\label{CocharacterConstant1}
Let $S$ be a connected scheme over~$k_0$, and let $\gamma$ be a graded fiber functor $\GhRep\to\GradLocFree(S)$. Then there exist a unique $c\in \CC_{\hat G}$ and a unique morphism $S\to \Spec k_c$ over~$k_0$, such that $\gamma$ is of type~$c$. The same assertion holds for any filtered fiber  functor $\psi\colon \GhRep\to\DescFilLocFree(S)$. 
\end{theorem}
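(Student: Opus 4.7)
The strategy I would take is to reduce the filtered case to the graded case, and then to attack the graded case via Tannakian duality plus a descent argument involving the Galois action on $\bar S \defeq S \times_{k_0} \bar k_0$. First, for any filtered fiber functor $\psi$, the associated graded $\descgr\circ\psi$ is a graded fiber functor; since $\psi$ is by definition of type $c$ exactly when $\descgr\circ\psi$ is, the filtered case follows immediately from the graded one. So from now on I work with $\gamma$, and I write $\bar\gamma$ for its pullback to $\bar S$; the profinite group $\Gamma \defeq \Aut(\bar k_0/k_0)$ acts on $\bar S$ through the second factor.

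For the graded case, I would first translate $\gamma$ via Tannakian duality into a pair $(P,\xi)$, where $P$ is a $\Ghat$-torsor over $S$ (the underlying ungraded fiber functor) and $\xi\colon \BG_{m,S}\to\underline{\Aut}(P)$ is a cocharacter of the inner form $\underline{\Aut}(P)$ of $\Ghat_S$. This is the global analogue of Proposition~\ref{AutOmegaChiIsP}(a). At each geometric point $\bar s\to\bar S$ the torsor $P$ trivializes and $\xi_{\bar s}$ becomes a cocharacter of $\Ghat_{\bar k(\bar s)}$, yielding a well-defined conjugacy class $c(\bar s)\in\CC_{\Ghat}$.

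The key step is to prove that $\bar s\mapsto c(\bar s)$ is constant on every connected component of $\bar S$. My plan is to pass to an fpqc cover $T\to\bar S$ trivializing $P$, so that $\xi$ becomes a morphism $\BG_{m,T}\to\Ghat_T$; the conjugacy of maximal tori lets it factor fpqc-locally through a maximal torus $\CT\subset\Ghat_T$, and after a further fpqc cover $\CT$ splits, so the induced cocharacter corresponds to a locally constant section of the constant lattice $X_*(\CT)$. Descending this local constancy back to $\bar S$ gives that $c$ factors to a $\Gamma$-equivariant map $c\colon\pi_0(\bar S)\to\CC_{\Ghat}$. This is the step I expect to be the main obstacle, since it requires combining the Tannakian reformulation with two successive fpqc reductions and then invoking descent to return from the inner form $\underline{\Aut}(P)$ to $\Ghat$.

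To conclude, I would use that when $S$ is connected the action of $\Gamma$ on $\pi_0(\bar S)$ is transitive, which follows by faithfully flat descent of idempotents applied to $H^0(S,\CO_S)\otimes_{k_0}\bar k_0$. Hence the image of $c$ is a single $\Gamma$-orbit; picking any representative $c$ of this orbit, its stabilizer in $\Gamma$ is $\Gal(\bar k_0/k_c)$, and Galois descent applied to the $\Gal(\bar k_0/k_c)$-invariant union of components of $\bar S$ mapping to $c$ yields the required unique factorization $S\to\Spec k_c\to\Spec k_0$ under which $\gamma$ becomes of type~$c$. Uniqueness of both $c$ and the morphism is then automatic: any other choice of $c$ would change $c(\bar s)$ at a compatible geometric point, and any other choice of morphism would violate the Galois-equivariance established above.
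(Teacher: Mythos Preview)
The paper does not contain a proof of this statement: Theorem~\ref{CocharacterConstant1} is quoted verbatim as \cite[Theorem~3.25]{ZieglerFFF} and no argument is given here. So there is nothing in the present paper to compare your proposal against.

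On its own merits your outline is a reasonable strategy, and the reduction from the filtered to the graded case is exactly right by Definition~\ref{DefTypeChi}. The Tannakian reformulation of a graded fiber functor as a torsor together with a cocharacter of the associated inner form is also the standard picture (this is essentially what underlies Proposition~\ref{AutOmegaChiIsP}(a) and Theorem~\ref{GradTorsor1a}). One point that deserves more care in your sketch is the passage ``factor fpqc-locally through a maximal torus $\CT\subset\Ghat_T$, and after a further fpqc cover $\CT$ splits, so the induced cocharacter corresponds to a locally constant section of the constant lattice $X_*(\CT)$.'' A cocharacter of a reductive group factors through \emph{some} maximal torus, but not a fixed one, and the conjugacy class is recovered from the $W$-orbit (or $\hat W$-orbit in the non-connected case) in $X_*(T)$, not from a single lattice point. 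So you need to argue that the induced map to the discrete set $X_*(T)/\hat W$ is locally constant; this is true but requires saying why the scheme of cocharacters of $\Ghat$ has this quotient as its set of connected components, or else arguing via the locally constant numerical invariants (the ranks of the graded pieces on each representation), which in fact suffice to pin down the conjugacy class. The final Galois descent paragraph is fine.
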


%%%%%%%%%%%%%%%%%%%%%%%%%%%%%%%%%%%%%%%%%%%%%%%%%

In general, a conjugacy class $c\in \CC_{\hat G}$ does not have a representative which is defined over $k_c$. For the following results, we therefore fix a field extension $k_c \subset k\subset \bar k_0$ and a representative $\chi\in c$ that is defined over~$k$. Let $\hat L$, $U$, and $\hat P = \hat L\ltimes U$ be the associated subgroups of~$\hat G_{k}$. Let $S$ be a scheme over~$k$.

\begin{theorem}[{\cite[Theorem 3.27]{ZieglerFFF}}] \label{GradTorsor1a} 
 There is a natural equivalence of categories from the category of graded fiber functors $\GhRep\to\GradLocFree(S)$ of type $c$ to the category of right $\hat L$-torsors over $S$, given by
$$\gamma \mapsto \UIsom^\otimes(\gamma_{\chi,S},\gamma).$$
\end{theorem}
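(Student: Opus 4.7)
The plan is to verify directly that the assignment $\gamma \mapsto \UIsom^\otimes(\gamma_{\chi,S},\gamma)$ lands in right $\hat L$-torsors, to construct its inverse via the contracted product, and then to check that these are mutually quasi-inverse.

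First I would show that $T \defeq \UIsom^\otimes(\gamma_{\chi,S},\gamma)$ is a right $\hat L$-torsor on $S$. It is automatically an fpqc sheaf of sets. The right action of $\hat L$ on $T$ is induced by Proposition \ref{AutOmegaChiIsP}(a), which identifies $\hat L_S$ with $\UAut^\otimes(\gamma_{\chi,S})$ acting by precomposition; this action is manifestly free and transitive on any fibre of $T$. The content of this step is fpqc-local non-emptiness. By hypothesis the pullbacks of $\gamma$ and $\gamma_\chi$ to $S\times_{k_c}\bar k_0$ are fpqc-locally isomorphic. Since $k_c \subseteq k \subseteq \bar k_0$, the multiplication map $k\otimes_{k_c}\bar k_0 \twoheadrightarrow \bar k_0$ yields a closed immersion $S\times_k \bar k_0 \hookrightarrow S\times_{k_c}\bar k_0$; restricting along it produces an fpqc-local isomorphism over $S\times_k\bar k_0$. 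The morphism $S\times_k\bar k_0 \to S$ is faithfully flat and quasi-compact, so composing covers trivializes $T$ fpqc-locally on $S$.

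Next I would construct the inverse. Given a right $\hat L$-torsor $I$ on $S$, set
\[
\gamma_I(V) \defeq I \times^{\hat L} \gamma_\chi(V)
\]
for $V \in \GhRep$. Because $\hat L = \Cent_{\Ghat_k}(\chi)$ preserves the weight decomposition of $V_k$, its action respects the grading, so $\gamma_I(V)$ is naturally graded. Fpqc-locally on $S$, the torsor $I$ is trivial and $\gamma_I$ is identified with $\gamma_{\chi,S}$; by flat descent this identification forces $\gamma_I$ to land in $\GradLocFree(S)$ and to be an exact $k_0$-linear tensor functor of type $c$.

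Finally I would show the two constructions are mutually quasi-inverse. Starting from a torsor $I$, any section over $S'\to S$ trivializes $I$ and hence induces a tensor isomorphism $\gamma_{\chi,S'}\iso(\gamma_I)_{S'}$; this produces a canonical $\hat L$-equivariant morphism $I \to \UIsom^\otimes(\gamma_{\chi,S},\gamma_I)$, an isomorphism fpqc-locally and therefore globally. Conversely, for $\gamma$ of type $c$, evaluation yields a natural tensor morphism $\UIsom^\otimes(\gamma_{\chi,S},\gamma)\times^{\hat L}\gamma_{\chi,S} \to \gamma$, which is likewise an isomorphism fpqc-locally. Full faithfulness is then automatic, since by Proposition \ref{AutOmegaChiIsP}(a) the $\hat L$-equivariant morphisms of torsors match the tensor natural transformations of the associated graded fiber functors. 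The main obstacle in this plan is the descent step establishing fpqc-local non-emptiness: the definition of type $c$ is phrased over $S\times_{k_c}\bar k_0$ but the conclusion must be read on $S$ itself, and one must navigate the tensor decomposition of $k\otimes_{k_c}\bar k_0$ carefully. Once this is settled, the remainder is a formal torsor/contracted-product manipulation.
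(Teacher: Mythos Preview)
The paper does not prove this theorem; it is stated as a citation of \cite[Theorem~3.27]{ZieglerFFF} and no argument is given in the body of the text. There is therefore nothing in the paper to compare your proposal against.

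That said, your outline is the standard and correct one. The only genuinely delicate point, which you flag yourself, is the passage from ``fpqc-locally isomorphic over $S\times_{k_c}\bar k_0$'' in Definition~\ref{DefTypeChi} to ``fpqc-locally isomorphic over $S$'' when $S$ is already a $k$-scheme. Your handling of this is right in spirit: since $k/k_c$ is finite separable, $k\otimes_{k_c}\bar k_0$ splits as a product of copies of $\bar k_0$, and the multiplication map picks out the component corresponding to the given inclusion $k\subset\bar k_0$; restricting the fpqc-local isomorphism to that component and then composing with the fpqc cover $S\times_k\bar k_0\to S$ does the job. One small sharpening: the immersion $S\times_k\bar k_0\hookrightarrow S\times_{k_c}\bar k_0$ is in fact open and closed (not merely closed), which is what makes ``restricting'' unproblematic. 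The remaining steps---the contracted-product inverse and the two natural isomorphisms exhibiting the quasi-inverse---are formal once Proposition~\ref{AutOmegaChiIsP}(a) is in hand, exactly as you say.
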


\begin{theorem}[{\cite[Theorem 4.43]{ZieglerFFF}}] \label{FilIsomTorsor1a}
There is a natural equivalence of categories from the category of filtered fiber functors $\GhRep\to\DescFilLocFree(S)$ of type $c$ to the category of right $\hat P$-torsors over $S$, given by
$$\psi \mapsto \UIsom^\otimes(\descfil\!\circ\gamma_{\chi,S},\psi).$$
\end{theorem}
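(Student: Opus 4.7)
The plan is to follow the same strategy as in Theorem~\ref{GradTorsor1a}, producing an explicit quasi-inverse via the contracted-product construction. The only new technical input is a lifting result from the graded to the filtered setting.

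First I would construct the candidate quasi-inverse $G$. To a right $\hat P$-torsor $Q$ over $S$ assign
\[
\psi_Q\colon \GhRep \to \DescFilLocFree(S),\quad V \mapsto Q \times^{\hat P} (\descfil\!\circ\gamma_\chi)(V),
\]
where the left $\hat P$-action on each filtered vector space is the one from Proposition~\ref{AutOmegaChiIsP}(b). Since $Q$ is fpqc-locally trivial, $\psi_Q$ is fpqc-locally isomorphic to $\descfil\!\circ\gamma_{\chi,S}$. All relevant properties---$k_0$-linearity, exactness and admissibility (via Lemma~\ref{LocallyAdmissible1}), and compatibility with tensor product and its constraints---are fpqc-local, so $\psi_Q$ inherits them and is a filtered fiber functor of type $c$.

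Next I would verify that $F\colon \psi \mapsto \UIsom^\otimes(\descfil\!\circ\gamma_{\chi,S},\psi)$ (the functor in the theorem) and $G$ are quasi-inverse. The natural candidates are
\[
\eta_Q\colon Q \to F(\psi_Q),\quad q \mapsto \bigl(v \mapsto [q,v]\bigr),
\]
and
\[
\epsilon_\psi\colon \psi_{F(\psi)} \to \psi,\quad [f,v]\mapsto f(v).
\]
To check these are isomorphisms one works fpqc-locally. After trivializing $Q$ on the one hand, or choosing a local isomorphism $\descfil\!\circ\gamma_{\chi,S} \cong \psi$ on the other, both reduce to the tautology $\hat P \times^{\hat P} W = W$ for any left $\hat P$-object $W$, combined with the identification $\UAut^\otimes(\descfil\!\circ\gamma_{\chi,S}) \cong \hat P$ of Proposition~\ref{AutOmegaChiIsP}(b).

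The main obstacle is showing that $F(\psi)$ is actually an $\hat P$-torsor for every filtered fiber functor $\psi$ of type $c$; equivalently, that any such $\psi$ is fpqc-locally isomorphic to $\descfil\!\circ\gamma_{\chi,S}$. Freeness of the action is immediate from Proposition~\ref{AutOmegaChiIsP}(b). For local non-emptiness, the definition of type $c$ together with Theorem~\ref{GradTorsor1a} yields an fpqc-local isomorphism $\descgr\!\circ\psi \cong \gamma_{\chi,S}$ at the graded level, and the task is to lift this to the filtered level. The natural map of sheaves $\UIsom^\otimes(\descfil\!\circ\gamma_{\chi,S},\psi) \to \UIsom^\otimes(\gamma_{\chi,S},\descgr\!\circ\psi)$ given by applying $\descgr$ has fibres (where non-empty) that are torsors under the kernel $U = \ker(\hat P \to \hat L)$, which is the smooth unipotent factor of the semidirect product $\hat P = \hat L \ltimes U$. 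Since $U$ is smooth, any such torsor is \'etale-locally trivial; hence the lift exists fpqc-locally, so $F(\psi)$ is an $\hat P$-torsor. Together with $\eta$ and $\epsilon$, this yields the claimed equivalence of categories.
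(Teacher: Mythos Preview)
The paper does not give its own proof of this statement; it is quoted directly from \cite[Theorem~4.43]{ZieglerFFF}. So there is no proof in the paper to compare against, but your argument has a genuine gap at the key step.

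Your overall strategy is sound: build the quasi-inverse by the contracted-product construction and reduce everything to showing that $\UIsom^\otimes(\descfil\!\circ\gamma_{\chi,S},\psi)$ is fpqc-locally non-empty. The gap is in how you establish this. You observe that the map
\[
\UIsom^\otimes(\descfil\!\circ\gamma_{\chi,S},\psi)\ \longto\ \UIsom^\otimes(\gamma_{\chi,S},\descgr\!\circ\psi)
\]
has fibres that are $U$-torsors ``where non-empty,'' and then invoke smoothness of $U$ to conclude that such torsors are \'etale-locally trivial. But this is circular: a $U$-torsor is by definition locally non-empty, so saying the non-empty fibres are $U$-torsors tells you nothing about whether any fibre is non-empty in the first place. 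What you have actually shown is only that $U$ acts freely on each fibre (a pseudo-torsor); you have not ruled out that every fibre is empty.

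The missing ingredient---that every filtered fiber functor of type $c$ is fpqc-locally isomorphic to $\descfil\!\circ\gamma_{\chi,S}$, i.e.\ is locally \emph{splittable as a tensor functor}---is precisely the substantive content of the result in \cite{ZieglerFFF}. For a single filtered locally free sheaf, local splittability is trivial (choose complements Zariski-locally). For a tensor functor, however, the splittings must be simultaneously compatible with all tensor products, duals, and morphisms in $\GhRep$, and this compatibility does not come for free from a formal torsor argument. Once local splittability is established by an independent argument, the rest of your proposal (the contracted-product inverse and the verifications of $\eta$ and $\epsilon$) goes through without difficulty.
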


\begin{theorem}[{\cite[Theorem 4.39]{ZieglerFFF}}]
\label{TorsorQuotient}
For any filtered fiber functor $\psi \colon\GhRep\to\DescFilLocFree(S)$ of type $c$, the functor $\gr^\bullet_C$ induces a natural isomorphism of right $\hat L$-torsors
$$\UIsom^\otimes(\descfil\!\circ\gamma_{\chi,S},\psi)\!\bigm/\!U
\ \cong\ \UIsom^\otimes(\gamma_{\chi,S},\descgr \circ\psi).$$
\end{theorem}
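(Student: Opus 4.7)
The plan is to show that applying $\gr^\bullet_C$ to a filtered isomorphism produces the required graded isomorphism, and then to verify that this procedure descends to an isomorphism of $\hat L$-torsors after quotienting by the action of $U$.

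First, the composite functor $\gr^\bullet_C\circ\descfil\colon \GradLocFree(S)\to\GradLocFree(S)$ is canonically isomorphic to the identity. Hence for any filtered isomorphism $f\colon \descfil\!\circ\!\gamma_{\chi,S}\iso\psi$, applying $\gr^\bullet_C$ and composing with this canonical isomorphism yields an isomorphism $\bar f\colon \gamma_{\chi,S}\iso\descgr\circ\psi$ of graded fiber functors. Since $\psi$ is of type $c$, Definition \ref{DefTypeChi} ensures that $\descgr\circ\psi$ is also of type $c$, so that by Theorem \ref{GradTorsor1a} the right-hand side of the desired isomorphism is indeed a right $\hat L$-torsor. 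The assignment $f\mapsto \bar f$ is plainly functorial in $\psi$ and commutes with pullback, defining a natural morphism of fpqc sheaves
$$\Phi\colon \UIsom^\otimes(\descfil\!\circ\!\gamma_{\chi,S},\psi) \longto \UIsom^\otimes(\gamma_{\chi,S},\descgr\circ\psi).$$

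Next, I analyze the equivariance of $\Phi$. By Proposition \ref{AutOmegaChiIsP}, the domain is a right $\hat P$-torsor via the identification $\hat P \iso \UAut^\otimes(\descfil\!\circ\!\gamma_\chi)$, while the codomain is a right $\hat L$-torsor via $\hat L \iso \UAut^\otimes(\gamma_\chi)$. Applying $\gr^\bullet_C$ to an automorphism of $\descfil\!\circ\!\gamma_\chi$ determined by an element $p\in\hat P$ gives an automorphism of $\gamma_\chi$; the resulting group homomorphism $\hat P\to\hat L$ must coincide with the canonical projection $\hat P = \hat L\ltimes U \onto \hat L$. Indeed, on every representation $V$, the subgroup $\hat L$ preserves each graded piece $V_k^i$ tautologically, while $U$ acts on $C^i V_k$ by endomorphisms that are the identity modulo $C^{i+1}V_k$, because $\Lie U$ is the sum of the positive weight spaces of $\Ad\circ\chi$ and therefore strictly raises the $\chi$-grading. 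Thus $U$ acts trivially on the associated graded functor, and $\Phi$ is equivariant for the projection $\hat P\to\hat L$; consequently it factors uniquely through the $U$-quotient, yielding a natural $\hat L$-equivariant morphism
$$\bar\Phi\colon \UIsom^\otimes(\descfil\!\circ\!\gamma_{\chi,S},\psi)\bigm/ U \longto \UIsom^\otimes(\gamma_{\chi,S},\descgr\circ\psi).$$
Since $U$ is a normal subgroup of $\hat P$ with quotient $\hat L$, the quotient on the left is itself a right $\hat L$-torsor for the fpqc topology. Hence $\bar\Phi$ is a morphism between two right $\hat L$-torsors on $S$, and any such morphism is automatically an isomorphism.

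The main obstacle is the verification that the homomorphism $\hat P \to \hat L$ induced by $\gr^\bullet_C$ on automorphism groups is the canonical projection; this requires carefully unwinding how elements of $U$ act on the tautological filtered fiber functor $\descfil\!\circ\!\gamma_\chi$ and observing that they strictly raise the filtration. Once that is in place, the remaining bookkeeping is essentially formal, relying only on the torsor structures supplied by Theorems \ref{FilIsomTorsor1a} and \ref{GradTorsor1a}.
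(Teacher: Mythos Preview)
The paper does not give its own proof of this statement; it is quoted verbatim from \cite[Theorem 4.39]{ZieglerFFF}, so there is no in-paper argument to compare against. Your approach---construct the map by applying $\gr^\bullet_C$, check equivariance for the projection $\hat P\twoheadrightarrow\hat L$, pass to the $U$-quotient, and conclude by the rigidity of torsor morphisms---is the natural one and is essentially correct.

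One point deserves more care. You argue that $U$ acts trivially on the associated graded because $\Lie U$ consists of positive weight spaces for $\Ad\circ\chi$. In positive characteristic a Lie-algebra statement does not automatically yield the corresponding group-level statement, so this step should be strengthened. A clean argument: the conjugation action of $\BG_m$ on $U$ via $\chi$ has only strictly positive weights on $\Lie U$, hence contracts $U$ to the identity as $t\to 0$; it follows that for any $u\in U(S')$ and any $v$ in the $i$-th graded piece $V_k^i$, the element $u\cdot v - v$ lies in $\bigoplus_{j>i}V_k^j$ (apply $\chi(t)$ and let $t\to 0$), so $u$ acts trivially on $\gr^i_C$. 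Alternatively, one may simply observe that the composite $\hat L\hookrightarrow\hat P\to\UAut^\otimes(\gamma_\chi)$ is already the isomorphism of Proposition~\ref{AutOmegaChiIsP}(a), whence the induced map $\hat P\to\hat L$ is a retraction of the inclusion and therefore coincides with the canonical projection, since $U=\ker(\hat P\to\hat L)$ is the unique normal complement. With this clarification your argument is complete.
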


\section{$F$-zips}
\label{Fzips}

%%%%%%%%%%%%%%%%%%%%%%%%%%%%%%%%%%%%%%%%%%%%%%%%%

% Let $\BF_q$ be a finite field with $q$ elements, and let $S$ be a scheme over $\BF_q$. The pullback of a scheme or a sheaf or a morphism over $S$ under the $q^{\rm th}$ power Frobenius $S\to S$ is denoted by $(\ )^{(q)}$.

\begin{definition}\label{FZipDef}
\begin{itemize}
\item[(a)] An \emph{$F$-zip over $S$} is a tuple $\UCM = (\CM,C^\bullet,D_\bullet,\phi_\bullet)$ consisting of a locally free sheaf of $\CO_S$-modules of finite rank $\CM$ on~$S$, a descending filtration $C^\bullet$ and an ascending filtration $D_\bullet$ of~$\CM$, and an $\CO_S$-linear isomorphism $\phi_i\colon (\gr_C^i\CM)^{(q)} \stackrel{\sim}{\to} \gr^D_i\CM$ for every $i\in\BZ$.
\item[(b)] A \emph{homomorphism} $f\colon\UCM\to\UCN$ of $F$-zips over $S$ is a homomorphism of the underlying sheaves of $\CO_S$-modules $\CM\to\CN$ which for all $i\in\BZ$ satisfies $f(C^i\CM)\subset C^i\CN$ and $f(D_i\CM)\subset D_i\CN$ and makes the following diagram commute:
$$\xymatrix{
(\gr_C^i\CM)^{(q)} \ar[r]_-\sim^-{\phi_i} \ar[d]_{(\gr^i_Cf)^{(q)}} & \gr^D_i\CM \ar[d]^{\gr^D_if} \\
(\gr_C^i\CN)^{(q)} \ar[r]_-\sim^-{\phi_i} & \gr^D_i\CN \rlap{.} \\
}$$
\item[(c)] The resulting \emph{category of $F$-zips over $S$} is denoted $\FZip(S)$.
\end{itemize}
\end{definition}

The category $\FZip(S)$ is additive, but due to the presence of the Frobenius pullback it is only $\BF_q$-linear in general, not $\CO_S$-linear. Easy examples show that it is not abelian if $S\not=\varnothing$.

\begin{definition}\label{FZipAdmis}
A homomorphism of $F$-zips $\UCM\to\UCN$ is \emph{admissible} if the underlying morphisms of filtered locally free sheaves $\CM \to \CN$ for both filtrations~$C^\bullet$ and~$D_\bullet$ is admissible.
\end{definition}

This notion turns $\FZip(S)$ into an exact category that is idempotent complete. 

%%%%%%%%%%%%%%%%%%%%%%%%%%%%%%%%%%%%%%%%%%%%%%%%%

\begin{definition}\label{FZipTypeDef}
An $F$-zip is called \emph{of rank $n$}, or \emph{of height $n$}, if its underlying sheaf of $\CO_S$-modules is of constant rank~$n$. 
Let $\underline{n} = (n_i)_{i\in\BZ}$ be a family of non-negative integers which vanish for almost all~$i$. An $F$-zip $\UCM$ is \emph{of type $\underline{n}$} if $\gr_C^i\CM$, or equivalently $\gr^D_i\CM$, is locally free of constant rank $n_i$ for all~$i$.
\end{definition}

Any $F$-zip of type $\underline{n}$ is of rank $\sum_in_i$. If its rank is $1$ there exists an integer $d$ such that $n_d = 1$ and $n_i = 0$ for $i \ne d$. In this case we say briefly that the $F$-zip is of type $d$. If $S$ is connected, every $F$-zip over $S$ possesses a unique type.

%%%%%%%%%%%%%%%%%%%%%%%%%%%%%%%%%%%%%%%%%%%%%%%%%

\begin{definition}\label{FZipTensorDef}
The \emph{tensor product} of $F$-zips $\UCM$ and $\UCN$ over $S$ is the $F$-zip $\UCM\otimes\UCN$ consisting of the tensor product $\CM\otimes\CN$ with the induced descending filtration $C^\bullet$ and the induced ascending filtration $D_\bullet$ of $\CM\otimes\CN$ and the induced isomorphisms
$$\xymatrix@C-10pt{
(\gr_C^i(\CM\otimes\CN))^{(q)} \ar@{=}[r]^-\sim \ar[d]_\cong &
% \left(\bigoplus_{j\in\BZ} \; \gr_C^i\CM \otimes \gr_C^{i-j}\CN\right)^{(q)} \ar@{=}[r] &
\bigoplus_{j\in\BZ} \; (\gr_C^j\CM)^{(q)} \otimes (\gr_C^{i-j}\CN)^{(q)}
\ar[d]_\cong^{\bigoplus_j (\phi_j\otimes\phi_{i-j})} \\
\gr^D_i(\CM\otimes\CN) \ar@{=}[r]^-\sim &
\bigoplus_{j\in\BZ} \; \gr^D_j\CM \otimes \gr^D_{i-j}\CN \rlap{.} \\}$$
There is also a straightforward definition of tensor product of morphisms of $F$-zips, we leave it to the reader to verify that this is a homomorphism of $F$-zips. The tensor product thus defines a functor $\FZip(S) \times \FZip(S) \to \FZip(S)$, which is $\BF_q$-bilinear and exact. 
\end{definition}

Comparing Definition \ref{FZipAdmis} with the construction in Subsection \ref{Powers} we find that all symmetric powers $\Sym^m\UCM$ and all alternating powers $\Alt^m\UCM$ of $F$-zips exist. They have evident descriptions in terms of the symmetric and alternating powers of the underlying filtered and graded locally free sheaves since symmetric and alternating powers of filtered and graded locally free sheaves are compatible with pullbacks under Frobenius and the functor $\gr$.

\begin{definition}\label{FZipDualDef}
The \emph{dual} of an $F$-zip $\UCM$ over $S$ is the $F$-zip $\UCM^\vee$ consisting of the dual sheaf of $\CO_S$-modules $\CM^\vee$ equipped with the duals of the filtrations $C^\bullet(\CM)$ and $D_\bullet(\CM)$ whose terms are given by $\gr_C^i(\CM^\vee)=(\gr_C^{-i}\CM)^\vee$ and $\gr^D_i(\CM^\vee)=(\gr^D_{-i}\CM)^\vee$, and the induced isomorphisms
$$\xymatrix{
(\gr_C^i(\CM^\vee))^{(q)} \ar@{=}[r] \ar[d]_\cong &
((\gr_C^{-i}\CM)^\vee)^{(q)} \ar[d]_\cong^{(\phi_{-i}^{-1})^\vee} \\
\gr^D_i(\CM^\vee)  \ar@{=}[r]^-\sim &
(\gr^D_{-i}\CM)^\vee \rlap{.} \\}$$
There is an evident notion of the dual of a homomorphism of $F$-zips, so that we obtain a functor $\FZip(S)^\opp \to \FZip(S)$, which is $\BF_q$-linear and exact. 
\end{definition}

As usual the tensor product and the dual yields the notion of an internal Hom of two $F$-zips $\UCM$ and $\UCN$ over $S$ by setting
\[
\UHom(\UCM,\UCN) := \UCM^\vee \otimes \UCN.
\]

%%%%%%%%%%%%%%%%%%%%%%%%%%%%%%%%%%%%%%%%%%%%%%%%%

\begin{example}\label{FZipTate}
The \emph{Tate $F$-zip of weight $d\in\BZ$} is $\UBOne(d) \defeq (\Oscr_S, C\updot, D\dodot, \varphi\dodot)$, where 
$$C^i =
\begin{cases}
\Oscr_S &\hbox{for}\ i \leq d;\\
0&\hbox{for}\ i > d;
\end{cases}
\qquad
D_i =
\begin{cases}
0&\hbox{for}\ i < d;\\
\Oscr_S&\hbox{for}\  i \geq d;
\end{cases}$$
and $\varphi_d$ is the identity on $\Oscr^{(q)}_S = \Oscr_S$. Thus $\UBOne(d)$ is an $F$-zip of rank $1$ and type $d$.
There are natural isomorphisms $\UBOne(d)\otimes\UBOne(d')\cong\UBOne(d+d')$ and $\UBOne(d)^\vee\cong\UBOne(-d)$.
The \emph{$d$-th Tate twist} of an $F$-zip $\UCM$ is defined as $\UCM(d) := \UCM\otimes\UBOne(d)$, and there is a natural isomorphism $\UCM(0)\cong\UCM$.
\end{example}

With the above tensor product and dual and the unit object $\UBOne(0)$ the category $\FZip(S)$ is a rigid tensor category. It is endowed with the following natural exact $\BF_q$-linear tensor functors:
\begin{eqnarray*}
\forget \colon&\!\! \FZip(S)\to\rlap{$\LocFree(S),$}\phantom{\DescFilLocFree(S),}& 
\UCM\mapsto \CM, \\
\descfil\colon&\!\! \FZip(S)\to\DescFilLocFree(S),& \UCM\mapsto (\CM,C^\bullet), \\
\ascfil \colon&\!\! \FZip(S)\to\AscFilLocFree(S),&  \UCM\mapsto (\CM,D_\bullet).
\end{eqnarray*}
The isomorphism $\phi_\bullet\colon(\gr_C^\bullet\CM)^{(q)}\stackrel{\sim}{\to}\gr^D_\bullet\CM$ that is part of an $F$-zip induces an isomorphism of tensor functors $\phi \circ (\ )^{(q)} \colon \descgr\circ \descfil\to \ascgr\circ \ascfil$. Combined with some of the functors from (\ref{FilSummaryDiagram}) we obtain the following diagram
\UseTheoremCounterForNextEquation
\begin{equation}
\label{FunctorSummaryDiagram}
\vcenter{\xymatrix{
  & \DescFilLocFree(S) \ar[dl]^-\descgr \ar@/^/[drrr]^\forget \\
    \GradLocFree(S) & \phi \circ (\ )^{(q)}\circlearrowright 
  & \FZip(S) \ar[dl]_-\ascfil \ar[ul]^-\descfil \ar[rr]^\forget && \LocFree(S) \\
  & \AscFilLocFree(S) \ar[ul]_-\ascgr \ar@/_/[urrr]_-\forget \rlap{,}
}}
\end{equation}
%\UseTheoremCounterForNextEquation
%\begin{equation}
%\label{FunctorSummaryDiagram}
%\vcenter{\xymatrix{
%  & \DescFilLocFree(S)\ar[rd]^-\gr \ar[rr]^\forget && \LocFree(S) \\
%       \FZip(S) \ar[ru]^-\descfil \ar[rd]_-\ascfil  \ar@/_55pt/[drrr]_\forget  
%       &  \phi\circlearrowright & \GradLocFree(S)&\\
%      &\AscFilLocFree(S)\ar[ru]^-\gr \ar[rr]^-\forget && \LocFree(S) \ar[uu]_-{(\;)^{(q)}} \\}}
%\end{equation}
which is commutative except that the left hand side commutes only up to~$\phi \circ (\ )^{(q)}$.

%%%%%%%%%%%%%%%%%%%%%%%%%%%%%%%%%%%%%%%%%%%%%%%%%

\medskip

Also, there is an evident notion of pullback of $F$-zips under morphisms $S'\to S$, compatible with everything discussed above. Lemma \ref{LocallyAdmissible1} directly implies:

\begin{lemma}\label{LocallyAdmissible2}
For a homomorphism of $F$-zips, the property of being admissible is local for the fpqc topology.
\end{lemma}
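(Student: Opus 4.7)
The plan is to observe that this lemma is essentially an immediate consequence of Lemma \ref{LocallyAdmissible1} combined with Definition \ref{FZipAdmis}. By definition a homomorphism $f \colon \UCM \to \UCN$ of $F$-zips is admissible precisely when the underlying homomorphism of locally free sheaves, viewed as a morphism in $\DescFilLocFree(S)$ with respect to $C^\bullet$, is admissible \emph{and} when it, viewed as a morphism in $\AscFilLocFree(S)$ with respect to $D_\bullet$, is admissible. This is a conjunction of two conditions, each of which concerns only the underlying filtered locally free sheaf structure and involves no data from the Frobenius-linear isomorphisms~$\phi_\bullet$.

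Concretely, I would argue as follows. Suppose $S' \to S$ is an fpqc covering and the pullback $f_{S'} \colon \UCM_{S'} \to \UCN_{S'}$ is admissible. Then the pullback of the underlying morphism $(\CM, C^\bullet) \to (\CN, C^\bullet)$ in $\DescFilLocFree(S)$ to $S'$ is admissible, and likewise for the underlying morphism $(\CM, D_\bullet) \to (\CN, D_\bullet)$ in $\AscFilLocFree(S)$ (since pullback of $F$-zips commutes with the forgetful functors to $\DescFilLocFree$ and $\AscFilLocFree$, as is clear from the discussion preceding diagram \eqref{FunctorSummaryDiagram}). Applying Lemma \ref{LocallyAdmissible1} separately to each of these two filtered structures, we conclude that both underlying morphisms of filtered locally free sheaves on $S$ are admissible. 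By Definition \ref{FZipAdmis} this means that $f$ itself is admissible.

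There is no real obstacle here; the lemma is essentially a bookkeeping consequence of the previous one, since admissibility of a morphism of $F$-zips is by definition the conjunction of two admissibility conditions in the filtered categories, and fpqc-local properties are preserved under finite conjunctions. The only point to verify is the trivial fact that the forgetful functors $\descfil$ and $\ascfil$ from \eqref{FunctorSummaryDiagram} commute with fpqc pullback, which holds by construction.
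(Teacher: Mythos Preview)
Your proposal is correct and takes essentially the same approach as the paper, which simply states that the lemma follows directly from Lemma~\ref{LocallyAdmissible1}. You have merely unpacked this implication via Definition~\ref{FZipAdmis}, which is exactly the intended argument.
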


%%%%%%%%%%%%%%%%%%%%%%%%%%%%%%%%%%%%%%%%%%%%%%%%%

%%% Local Variables: 
%%% mode: latex
%%% TeX-master: "Gzips"
%%% End: 

%\newpage
\section{$\hat G$-zip functors}
\label{Functors}

%%%%%%%%%%%%%%%%%%%%%%%%%%%%%%%%%%%%%%%%%%%%%%%%%

Throughout this section we fix a (not necessarily connected) linear algebraic group $\hat G$ over $\BF_q$. Let $S$ be a scheme over~$\BF_q$. It is known (for example by \cite[Prop.~2.9]{Nori1976}) that giving an exact $\BF_q$-linear tensor functor $\GhRep\to\LocFree(S)$ is equivalent to giving a $\Ghat$-torsor over~$S$. This suggests the idea that an exact $\BF_q$-linear tensor functor from $\GhRep$ to an arbitrary exact $\BF_q$-linear tensor category $\CC$ may be viewed as a ``$\hat G$-torsor in~$\CC$'', which underlies Section 8 of Deligne's article \cite{DeligneGroth}. In the present section we apply this point of view to the category of $F$-zips and describe an equivalence between exact $\BF_q$-linear tensor functors $\GhRep\to\FZip(S)$ and $\hat G$-zips.

%%%%%%%%%%%%%%%%%%%%%%%%%%%%%%%%%%%%%%%%%%%%%%%%%

\subsection{The stack of $\hat G$-zip functors}
\label{StackFunctors}

\begin{definition}
\label{ZipFunctorDefinition}
\begin{itemize}
\item[(a)] A \emph{$\hat G$-zip functor over $S$} is an exact $\BF_q$-linear tensor functor 
$$\Fz\colon \GhRep\to\FZip(S).$$
\item[(b)] A \emph{morphism} of $\hat G$-zip functors over $S$ is a natural transformation that is compatible with the tensor product.
\item[(c)] The resulting category of $\hat G$-zip functors over $S$ is denoted $\GhZipFunctor(S)$.
\end{itemize}
\end{definition}

With the evident notion of pullback the $\GhZipFunctor(S)$ form a fibered category over the category $(({\Sch}/\BF_q))$ of schemes over~$\BF_q$, which we denote $\GhZipFunctor$. 

\begin{proposition}\label{GZipFStack}
$\GhZipFunctor$ is a stack.
\end{proposition}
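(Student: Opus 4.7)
The plan is to verify the two conditions for a stack: that $\GhZipFunctor$ is a category fibered in groupoids, and that isomorphisms and objects in it satisfy effective fpqc descent.

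First, I would check that $\GhZipFunctor$ is fibered in groupoids. The cleavage is given by postcomposing an exact $\BF_q$-linear tensor functor $\Fz\colon\GhRep\to\FZip(S)$ with the exact tensor pullback functor $\FZip(S)\to\FZip(S')$ associated to a morphism $S'\to S$; this is itself exact $\BF_q$-linear and tensor. To see that every morphism $\alpha\colon\Fz_1\to\Fz_2$ of $\hat G$-zip functors is an isomorphism, I would invoke the standard Tannakian rigidity argument: every $V\in\GhRep$ has a dual $V^\vee$ with evaluation and coevaluation maps, and the target category $\FZip(S)$ is itself rigid by Section~\ref{Fzips}. Tensor-compatibility of $\alpha$ then forces $\alpha_V$ to be inverse to the transpose of $\alpha_{V^\vee}$ via the triangle identities, so each component $\alpha_V$ is automatically invertible.

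Next, to establish the descent properties I would first observe that the fibered category $\FZip$ is itself a stack over $\BF_q$. Indeed, for any fpqc covering $S'\to S$ ordinary fpqc descent of quasi-coherent sheaves produces the underlying sheaf $\CM$; the fact that a subsheaf is a locally direct summand is fpqc-local (shown in the proof of Lemma~\ref{LocallyAdmissible1}) hence both filtrations descend; and the Cartier-type isomorphism $\phi_\bullet$ descends as a morphism between the descended sheaves on the graded pieces. Given this, to check the sheaf condition on isomorphisms in $\GhZipFunctor$, fix $\Fz_1,\Fz_2\in\GhZipFunctor(S)$; a morphism $\Fz_1\to\Fz_2$ over any $T\to S$ is the data of a family $\alpha_V\colon\Fz_{1,T}(V)\to\Fz_{2,T}(V)$ natural in $V$ and compatible with tensor products. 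Each $\UIsom_{\FZip}(\Fz_{1,T}(V),\Fz_{2,T}(V))$ is an fpqc sheaf in $T$ by the stack property of $\FZip$, and the naturality and tensor-compatibility conditions are equalities of sheaf sections and therefore also descend.

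Finally, for effective descent of objects I would take a descent datum consisting of $\Fz'\colon\GhRep\to\FZip(S')$ together with a tensor natural isomorphism $\theta\colon\pr_1^*\Fz'\iso\pr_2^*\Fz'$ over $S'\times_S S'$ satisfying the cocycle condition on the triple fiber product. For each $V\in\GhRep$ the pair $(\Fz'(V),\theta_V)$ is a descent datum in $\FZip$, which by the stack property of $\FZip$ yields an $F$-zip $\Fz(V)$ over $S$; functoriality in $V$, $\BF_q$-linearity, and the tensor constraints transport to $\Fz$ by the already-established descent of morphisms. Exactness of $\Fz$ amounts to showing that admissible morphisms in $\GhRep$ are sent to admissible morphisms in $\FZip(S)$, and this follows from the exactness of $\Fz'$ together with the fpqc-locality of admissibility provided by Lemma~\ref{LocallyAdmissible2}. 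The only real technical step is verifying that $\FZip$ is itself a stack, but once Lemma~\ref{LocallyAdmissible2} is in hand this reduces to ordinary fpqc descent for quasi-coherent sheaves; the rest of the proof is formal Tannakian bookkeeping.
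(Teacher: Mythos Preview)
Your proposal is correct and follows essentially the same approach as the paper: both verify the groupoid property via rigidity of the source and target tensor categories (the paper cites \cite[I.5.2.3]{Saavedra} where you spell out the duality argument), and both obtain descent for morphisms and objects by descending each $\Fz(V)$ as an $F$-zip and then checking that naturality, tensor constraints, and exactness (via Lemma~\ref{LocallyAdmissible2}) descend. The only organizational difference is that you isolate ``$\FZip$ is a stack'' as an explicit intermediate step, whereas the paper invokes effective descent for $F$-zips inline.
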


\begin{proof}
Since $\GhRep$ and $\FZip(S)$ are rigid tensor categories, any morphism of $\hat G$-zip functors is an isomorphism (see \cite[I.5.2.3]{Saavedra}); hence $\GhZipFunctor$ is fibered in groupoids. It remains to prove that $\GhZipFunctor$ satisfies effective descent for morphisms and objects. For this let $S'\to S$ be an fpqc covering and set $S''\defeq S'\times_S S'$.

First consider objects $\Fz_1$, $\Fz_2\in \GhZipFunctor(S)$ and a morphism $\lambda'\colon \Fz_{1,S'}\to\Fz_{2,S'}$ whose two pullbacks to $S''$ coincide. Since morphisms of $F$-zips satisfy effective descent with respect to the fpqc topology, for any $V\in \GhRep$ the homomorphism $\lambda'(V)\colon \Fz_1(V)_{S'}\to \Fz_2(V)_{S'}$ comes from a unique homomorphism $\lambda(V)\colon \Fz_1(V)\to \Fz_2(V)$. In order for $\lambda$ to be a tensor morphism, certain diagrams in $\FZip(S)$ need to commute. But as $\lambda'$ is a tensor morphism, these diagrams commute after pullback to~$S'$; hence by descent they commute over $S$ and thus $\lambda$ is a tensor morphism. Therefore $\GhZipFunctor$ satisfies effective descent for morphisms.

Now consider an object $\Fz'$ of $\GhZipFunctor(S')$ equipped with a descent datum. For each $V\in \GhRep$ this descent datum induces a descent datum on $\Fz'(V)$; hence it yields an object $\Fz(V)$ of $\FZip(S)$ with $\Fz'(V)=\Fz(V)_{S'}$. Next, the descent datum on $\Fz'(V)$ depends functorially on $V$. Thus for each morphism $f\colon V\to V'$ in $\GhRep$ the two pullbacks of $\Fz'(f)\colon \Fz(V)_{S'}\to \Fz(V')_{S'}$ coincide and therefore come from a unique morphism $\Fz(f)\colon\Fz(V)\to \Fz(V')$. The uniquess of $\Fz(f)$ implies that $\Fz\colon \GhRep\to\FZip(S)$ is a functor. Making $\Fz$ into a tensor functor requires functorial isomorphisms $\Fz(\Bone)\cong \Bone$ and $\Fz(V)\otimes\Fz(V')\cong \Fz(V\otimes V')$  for all $V$, $V'\in \GhRep$ which are compatible with the associativity, commutativity and unit constraints of the tensor category $\FZip(S)$. These are again obtained by descent from the corresponding isomorphisms for~$\Fz'$, and the compatibility with the constraints holds because it holds after pullback to~$S'$. Finally, by Lemma \ref{LocallyAdmissible2} the exactness of $\Fz$ follows from the exactness of~$\Fz'$. Altogether $\Fz$ is an element of $\GhZipFunctor(S)$ which gives rise to $\Fz'$ with its descent datum. Thus $\GhZipFunctor$ satisfies effective descent for objects and we are done.
\end{proof}

To analyze a zip functor we will compose it with the functors $\forget$, $\descfil$, and $\ascfil$ from (\ref{FunctorSummaryDiagram}). First we look at the numerical invariants obtained from the filtrations. 
Let $\bar\BF_q$ be an algebraic closure of~$\BF_q$. As in Section \ref{FFFAC} let $\CC_{\hat G}$ denote the set of $\hat G(\bar\BF_q)$-conjugacy classes of cocharacters $\BG_{m,\bar\BF_q}\to\nobreak \hat G_{\bar\BF_q}$, and let $k_c\subset\bar\BF_q$ denote the field of definition of an element $c\in \CC_{\hat G}$, which is a finite extension of~$\BF_q$.

\begin{definition}
\label{FunctorType}
Let $c\in \CC_{\hat G}$ and let $S$ be a scheme over $k_c$.
\begin{itemize}
\item[(a)] A $\hat G$-zip functor $\Fz$ over $S$ is called \emph{of type $c$}, or \emph{of type $\chi\in c$}, if the associated functor $\descgr \circ\descfil\circ\nobreak\Fz\colon\allowbreak \GhRep\to \GradLocFree(S)$ is of type~$c$ in the sense of Definition~\ref{DefTypeChi}.
\item[(b)] The full subcategory of $\GhZipFunctor(S)$ whose objects are the $G$-zip functors of type $c$ is denoted $\GhZipFunctor^c_{k_c}(S)$.
\end{itemize}
\end{definition}

With the evident notion of pullback the categories $\GhZipFunctor^c_{k_c}(S)$ form a fibered category over the category $(({\Sch}/k_c))$, which we denote $\GhZipFunctor^c_{k_c}$. Since Definition \ref{DefTypeChi} is local for the fpqc topology, Proposition \ref{GZipFStack} and Definition \ref{FunctorType} directly imply:

\begin{proposition}\label{GZipFchiStack}
$\GhZipFunctor^c_{k_c}$ is a substack of $\GhZipFunctor_{k_c}$.
\end{proposition}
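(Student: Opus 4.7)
Since $\GhZipFunctor^c_{k_c}$ is by definition a full subcategory of the restriction of $\GhZipFunctor$ to the category $(({\rm Sch}/k_c))$, which is fibered in groupoids by Proposition~\ref{GZipFStack}, it is automatically fibered in groupoids, and the descent for morphisms is inherited from the ambient stack. The only thing left to verify is effective descent for objects: if $S' \to S$ is an fpqc covering of $k_c$-schemes and $\Fz \in \GhZipFunctor(S)$ is a $\hat G$-zip functor whose pullback $\Fz_{S'}$ lies in $\GhZipFunctor^c_{k_c}(S')$, then $\Fz$ itself is of type~$c$. Since $\GhZipFunctor$ is already a stack, $\Fz$ exists over $S$; the point is only to promote the type condition from $S'$ to~$S$.

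For this, I would simply unwind the definition. Set $\gamma \defeq \descgr \circ \descfil \circ \Fz\colon\GhRep \to \GradLocFree(S)$, so that by Definition~\ref{FunctorType} the condition that $\Fz$ be of type $c$ is precisely the condition that $\gamma$ be of type $c$ as a graded fiber functor. By Definition~\ref{DefTypeChi}, this in turn means that the pullbacks of $\gamma$ and $\gamma_\chi$ to $S \times_{k_c}\!\bar\BF_q$ become isomorphic after some fpqc base change. The hypothesis that $\Fz_{S'}$ is of type $c$ says exactly that $\gamma_{S'}$ and $\gamma_\chi$ become isomorphic after some fpqc base change of $S' \times_{k_c}\!\bar\BF_q$. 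Since $S' \to S$ is an fpqc covering, so is $S' \times_{k_c}\!\bar\BF_q \to S \times_{k_c}\!\bar\BF_q$, and the composition of two fpqc coverings is an fpqc covering; hence $\gamma$ and $\gamma_\chi$ are fpqc-locally isomorphic on $S \times_{k_c}\!\bar\BF_q$, which is what we needed.

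There is essentially no obstacle here: the statement is purely a matter of tracing through the definitions, using that the composition of the exact tensor functors $\descgr$, $\descfil$ and the pullback functor commute with base change, and that the notion ``fpqc-locally isomorphic'' is trivially preserved under further fpqc base change. In particular, no use of the results of Section~\ref{FFFAC} beyond Definition~\ref{DefTypeChi} itself is required for this proposition, and the proof is just a few lines.
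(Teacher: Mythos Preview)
Your proof is correct and is essentially the same as the paper's: the paper simply observes that the condition in Definition~\ref{DefTypeChi} is fpqc-local (which is exactly your composition-of-fpqc-coverings argument) and then invokes Proposition~\ref{GZipFStack}, whereas you spell out why fpqc-locality of the type condition yields effective descent for objects in the full subcategory.
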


The next result says that every zip functor over a connected scheme has a type.

\begin{proposition}
Let $S$ be a connected scheme over $\BF_q$ and $\Fz$ a $\hat G$-zip functor over~$S$. Then there exist a unique $c\in \CC_{\hat G}$ and a unique morphism $S\to \Spec k_c$ over $\BF_q$ such that $\Fz$ is of type $c$.
%Assume that $G^0$ is quasi-split. Let $S$ be a connected scheme over $\BF_q$ and $\Fz$ a $G$-zip functor over~$S$. Then there exist a finite field extension $k$ of\/~$\BF_q$, a morphism $S\to\Spec k$ over~\/$\BF_q$, and a cocharacter $\chi\colon \Gm k\to G_{k}$, such that $k$ is the field of definition of the conjugacy class of $\chi$, and $\Fz$ is a $G$-zip functor of type~$\chi$. Moreover $k$ and the morphism $S\to\Spec k$ are unique up to unique isomorphism, and $\chi$ is unique up to conjugation by $G(k)$.
\end{proposition}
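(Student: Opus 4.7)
The plan is to reduce the statement immediately to Theorem \ref{CocharacterConstant1} via the graded fiber functor naturally attached to $\Fz$.

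First I would observe that the composition $\gamma \defeq \descgr\circ\descfil\circ\Fz\colon \GhRep\to \GradLocFree(S)$ is a graded fiber functor in the sense of Section~\ref{FFFAC}. Indeed, $\Fz$ is an exact $\BF_q$-linear tensor functor by hypothesis, and the functors $\descfil\colon \FZip(S)\to \DescFilLocFree(S)$ and $\descgr\colon \DescFilLocFree(S)\to \GradLocFree(S)$ are both exact $\BF_q$-linear tensor functors as recorded in diagram~\eqref{FunctorSummaryDiagram} and Subsection~\ref{PreDescFilt}. Composition of exact $\BF_q$-linear tensor functors is again exact $\BF_q$-linear tensor, so $\gamma$ qualifies.

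Next, since $S$ is connected, Theorem~\ref{CocharacterConstant1} applied to $\gamma$ yields a unique conjugacy class $c\in\CC_{\hat G}$ together with a unique morphism $S\to \Spec k_c$ over $\BF_q$ such that $\gamma$ is of type $c$ in the sense of Definition~\ref{DefTypeChi}. By Definition~\ref{FunctorType}(a), this is precisely the statement that $\Fz$ is of type $c$ over $S$ (viewed as a scheme over $k_c$ via the given morphism). Uniqueness of both $c$ and the morphism $S\to \Spec k_c$ transfers directly from the uniqueness in Theorem~\ref{CocharacterConstant1}, because Definition~\ref{FunctorType}(a) characterizes the type of $\Fz$ entirely through the type of~$\gamma$.

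There is no real obstacle here; the content of the proposition lies in Theorem~\ref{CocharacterConstant1}, which has already been invoked. The only point worth verifying carefully is that Definition~\ref{FunctorType} really makes the notion of type for $\hat G$-zip functors coincide with the notion of type for the induced graded fiber functor, so that the existence and uniqueness in Theorem~\ref{CocharacterConstant1} transfer without loss.
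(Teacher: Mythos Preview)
Your proposal is correct and follows exactly the approach of the paper, which simply states that the result is a ``direct consequence of Definition~\ref{FunctorType} and Theorem~\ref{CocharacterConstant1}.'' You have merely unpacked that one-line proof, spelling out why $\descgr\circ\descfil\circ\Fz$ is a graded fiber functor and how Definition~\ref{FunctorType} transfers the conclusion of Theorem~\ref{CocharacterConstant1} back to~$\Fz$.
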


%As $G$ is defined over a finite field, $G^0$ is automatically quasi-split if $G^0$ is reductive.

\begin{proof}
Direct consequence of Definition \ref{FunctorType} and Theorem \ref{CocharacterConstant1}.
\end{proof}

\begin{corollary}\label{GZipFDecomp}
%Assume that $G^0$ is quasi-split. Consider pairs $(k,\chi)$ consisting of a finite field extension of $\BF_q$ and a cocharacter $\chi\colon \Gm k\to G_{k}$, such that $k$ is the field of definition of the conjugacy class of $\chi$. Call two such pairs $(k,\chi)$ and $(k',\chi')$ equivalent if there exists an isomorphism $k\cong k'$ over $\BF_q$ making the conjugacy classes of $\chi$ and $\chi'$ correspond.
\begin{itemize}
\item[(a)] Each $\GhZipFunctor^c_{k_c}$, viewed as a stack over $\BF_q$ by Grothendieck restriction, is an open and closed substack of $\GhZipFunctor$.
\item[(b)] $\GhZipFunctor$ is the disjoint union of the $\GhZipFunctor^c_{k_c}$ taken over all $c\in \CC_{\hat G}$.
\end{itemize}
\end{corollary}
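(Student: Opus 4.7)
The proof will be essentially a formal consequence of the preceding proposition, which asserts that on a connected base any $\hat G$-zip functor has a unique type $c \in \CC_{\hat G}$ together with a unique compatible morphism to $\Spec k_c$. My plan is to deduce (b) directly from this by decomposing into connected components, and then to deduce (a) by showing that the ``type $c$ locus'' is always a union of connected components of the base.

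For part (b), I would fix a scheme $S$ over $\BF_q$ and a $\hat G$-zip functor $\Fz \in \GhZipFunctor(S)$. Writing $S$ as the disjoint union of its connected components $S_\alpha$, the preceding proposition produces for each $\alpha$ a unique $c_\alpha \in \CC_{\hat G}$ and a unique morphism $S_\alpha \to \Spec k_{c_\alpha}$ over $\BF_q$ such that $\Fz|_{S_\alpha}$ is of type $c_\alpha$. Grouping the components by their type assembles a canonical decomposition of $\Fz$ as a disjoint union of zip functors indexed by the set of occurring types, and this is visibly functorial in $S$. Since giving an object in the Grothendieck restriction of $\GhZipFunctor^c_{k_c}$ over $S \to \Spec \BF_q$ is exactly the same as giving a $\hat G$-zip functor over $S$ of type $c$ together with the (unique) factorization $S \to \Spec k_c$, this establishes (b).

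For part (a), I need to check that for every morphism $T \to \GhZipFunctor$ from a scheme $T$, corresponding to some $\Fz \in \GhZipFunctor(T)$, the fiber product $T \times_{\GhZipFunctor} \GhZipFunctor^c_{k_c}$ is represented by an open and closed subscheme of $T$. By the definition of the Grothendieck restriction and of $\GhZipFunctor^c_{k_c}$, a morphism $T' \to T$ factors through this fiber product if and only if $\Fz|_{T'}$ is of type $c$ and the (then unique) structure morphism $T' \to \Spec k_c$ fits into the commutative square over $\BF_q$. Applying the preceding proposition componentwise to $T'$, this happens if and only if $T'$ maps into the union $T^c \subseteq T$ of those connected components of $T$ on which $\Fz$ has type~$c$. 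Since $T^c$ is a union of connected components of $T$, it is open and closed in $T$, and it visibly represents the fibered product.

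The main point to be careful about is exactly this last step: one needs that the ``type'' of $\Fz$ is locally constant on $T$, which is what the preceding proposition delivers (via the uniqueness on each connected component). No additional obstacle arises, since uniqueness of the factorization $S \to \Spec k_c$ on connected components guarantees that the gluing of the local factorizations is automatic. Combining (a) with the componentwise decomposition from (b) shows that $\GhZipFunctor = \bigsqcup_{c \in \CC_{\hat G}} \GhZipFunctor^c_{k_c}$ as stacks over $\BF_q$, completing the proof.
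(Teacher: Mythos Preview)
The paper states this corollary without proof, so your approach of unpacking it from the preceding proposition is exactly what is intended. There is, however, one gap. You assert that $T^c$, being a union of connected components of~$T$, is open and closed in~$T$, and similarly you begin part~(b) by writing an arbitrary~$S$ as the disjoint union of its connected components. Neither step is valid for schemes that are not locally connected: for example $\Spec\bigl(\prod_{n\ge 1}\BF_p\bigr)$ is totally disconnected, so every subset is a union of connected components, yet most subsets fail to be open.

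What you actually need is that the type of $\Fz$ is \emph{Zariski locally constant} on~$T$, not merely constant on connected components. This holds because the conjugacy class~$c$ is determined by the ranks of the graded pieces of $\descgr\!\circ\!\descfil\!\circ\!\Fz(V)$ for $V$ ranging over a finite tensor-generating family of~$\GhRep$, and these ranks are locally constant integer-valued functions on~$T$. Hence $T^c$ is genuinely clopen, and on it the (unique) factorization through $\Spec k_c$ can be constructed Zariski-locally and then glued using the uniqueness clause of the proposition. With this correction your argument goes through.
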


\begin{theorem}[{\cite[Theorem 3.32]{ZieglerFFF}}] \label{InnerForm}
For any $c\in \CC_{\hat G}$ there exists an inner form 
\begin{equation*}
(\hat G', \tau\colon \hat G'_{\bar k}\isoto \hat G_{\bar k})
\end{equation*}
of $\hat G$ defined over $k_c$ and a cocharacter $\chi\colon \BG_{m,k_c}\to \hat G'$ such that $\tau \circ \chi_{\bar k}$ lies in $c$. 
\end{theorem}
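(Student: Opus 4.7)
The plan is to realize the inner form $\hat G'$ by Galois descent, using a $1$-cocycle built from inner automorphisms of $\hat G$ that renders a chosen representative of $c$ rational.

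I would begin by choosing a representative $\chi_0 \in c$ defined over some finite Galois extension $L/k_c$, with $\Delta \defeq \Gal(L/k_c)$. By the very definition of the field of definition $k_c$, the group $\Delta$ preserves the $\hat G(\bar k_0)$-conjugacy class $c$, so for every $\sigma \in \Delta$ there exists $g_\sigma \in \hat G(L)$ with $\sigma(\chi_0) = g_\sigma^{-1}\chi_0\, g_\sigma$. Each $g_\sigma$ is determined modulo the centralizer $\hat L \defeq \Cent_{\hat G_L}(\chi_0)$. Define the $\sigma$-semilinear automorphism $\tilde\sigma \defeq \inn(g_\sigma^{-1})\circ\sigma$ of $\hat G_L$, which by construction satisfies $\tilde\sigma(\chi_0) = \chi_0$. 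A direct computation using $g_\sigma \sigma(g_\tau) = \ell_{\sigma,\tau}\, g_{\sigma\tau}$ with $\ell_{\sigma,\tau}\in\hat L(L)$ shows that $\tilde\sigma\circ\tilde\tau = \inn(h_{\sigma,\tau})\circ\widetilde{\sigma\tau}$ for suitable elements $h_{\sigma,\tau} \in \hat L(L)$ produced by this $\ell$-discrepancy.

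The heart of the argument is to modify the $g_\sigma$ by elements of $\hat L(L)$ so that the system $\{\tilde\sigma\}$ becomes a cocycle of inner-automorphism twists, i.e.\ so that the discrepancies $h_{\sigma,\tau}$ become central in $\hat G$. Equivalently one needs to show that the class of the obstruction $\ell_{\sigma,\tau}$ in the appropriate cohomology of $\Delta$ with values in $\hat L/(\hat L\cap Z(\hat G))$ is trivial. This uses the structure of the centralizer $\hat L$ (whose identity component contains a Levi subgroup of $G_L$) together with standard rationality and vanishing results for connected reductive groups, such as the triviality of $H^1$ for connected unipotent groups and the existence of rationally defined parabolics in suitable inner forms; this is the content of the more technical arguments in \cite{ZieglerFFF}.

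Once a genuine cocycle $\{\tilde\sigma\}$ of inner twists has been obtained, standard Galois descent yields an inner form $\hat G'$ of $\hat G$ over $k_c$ together with an isomorphism $\tau\colon \hat G'_{\bar k}\iso \hat G_{\bar k}$ identifying the canonical Galois action on $\hat G'$ with the twisted action $\{\tilde\sigma\}$ on $\hat G$. Because $\chi_0$ is fixed by the twisted Galois action by construction, it descends to a cocharacter $\chi\colon \BG_{m,k_c}\to \hat G'$ satisfying $\tau\circ\chi_{\bar k} = \chi_0 \in c$, as required. The main obstacle is the cocycle adjustment step; the remaining formalities of descent are routine given that step.
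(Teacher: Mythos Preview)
The paper does not give a proof of this theorem at all: it is stated with a bare citation to \cite[Theorem 3.32]{ZieglerFFF} and then used as a black box. So there is no ``paper's own proof'' to compare your proposal against.

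Your outline is the natural strategy and is essentially what one would expect a proof to look like: choose a representative $\chi_0$ over a finite Galois extension, use the $\Gal(\bar k_0/k_c)$-stability of $c$ to produce conjugating elements $g_\sigma$, and try to twist by the resulting inner automorphisms. You also correctly identify that the system $(\inn(g_\sigma^{-1}))_\sigma$ is only a $1$-cochain modulo $\hat L$, and that the entire content of the theorem lies in showing one can adjust the $g_\sigma$ by elements of $\hat L(L)$ so that their images in $\hat G^{\ad}$ satisfy the cocycle condition.

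However, your proposal is not a proof: at precisely this step you write that it ``uses the structure of the centralizer $\hat L$ \ldots\ together with standard rationality and vanishing results'' and that ``this is the content of the more technical arguments in \cite{ZieglerFFF}.'' That is the whole theorem. The obstruction to the adjustment is a genuine $2$-cocycle with values in the image of $\hat L$ in $\hat G^{\ad}$, and its vanishing is not a formality; invoking unspecified $H^1$-vanishing for unipotent groups does not address it, since the relevant coefficients are in a (possibly disconnected) reductive group, not a unipotent one. So what you have written is a correct \emph{reduction} of the statement to the key cohomological vanishing, together with a citation to the same source the paper cites---which is fine as an explanation of what is going on, but it does not constitute an independent proof.
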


\begin{remark}
From the Tannakian viewpoint, replacing $\hat G$ by an inner form does not change the category $\GhRep$; it merely endows it with a different fiber functor. In particular it does not change the stack of $\hat G$-zip functors. Thus Theorem \ref{InnerForm} implies that to study zip functors of a given type~$c$, we may without loss of generality assume that $c$ has a representative $\chi$ which is defined over~$k_c$.
\end{remark}

%%%%%%%%%%%%%%%%%%%%%%%%%%%%%%%%%%%%%%%%%%%%%%%%%

\subsection{Equivalence with $\hat G$-zips}
\label{FunctorsZips}

We now assume that the identity component $G$ of $\Ghat$ is reductive. We fix a finite field extension $k$ of $\BF_q$ and a cocharacter $\chi\colon \Gm k\to \Ghat_k$. We let $\Lhat\subset \Ghat_k$ denote the centralizer of~$\chi$ and set $\Theta := \pi_0(\Lhat)\subset\pi_0(\Ghat_k)$. Then we are in the situation of Subsection~\ref{notation2} with the maximal possible choice of~$\Theta$. We will use all the pertaining notation from Section \ref{Gzips}. Let $c$, $c^{(q)}\in \CC_{\hat G}$ denote the conjugacy classes of $\chi$, $\chi^{(q)}$. 

\begin{construction}\label{BasicZipFunctor}
For any finite dimensional representation $V$ of $\Ghat$ over~$\BF_q$, the cocharacter $\chi$ determines a grading $V_k = \bigoplus_{i\in\BZ}V_k^i$. This grading induces a descending filtration $C^\bullet(V_k)$. Also, the definition of $V_k$ by base extension induces a natural isomorphism $V_k^{(q)}\cong V_k$. Thus we may consider the decomposition $\bigoplus_{i\in\BZ}(V_k^i)^{(q)}$ as another grading of $V_k$, namely that induced by the cocharacter $\chi^{(q)}$. This grading induces an ascending filtration $D_\bullet(V_k)$. Then for all $i\in\BZ$ we obtain natural isomorphisms $\phi_i(V_k)\colon (\gr^i_C(V_k))^{(q)} \stackrel{\sim}{\to} (V_k^i)^{(q)} \stackrel{\sim}{\to} \gr_i^D(V_k)$. Altogether this data defines an $F$-zip over~$k$, denoted
$$\Fz_1(V) \defeq \bigl( V_k,C^\bullet(V_k),D_\bullet(V_k),\phi_\bullet(V_k) \bigr).$$
Clearly this construction is $\BF_q$-linearly functorial in $V$ and compatible with tensor product. It therefore defines a $\hat G$-zip functor over $k$
$$\Fz_1\colon \GhRep\to\FZip(\Spec k).$$
By pullback we obtain a zip functor $\Fz_{1,S}$ over any scheme $S$ over $k$. We will measure an arbitrary zip functor over $S$ by how it differs from this basic zip functor~$\Fz_{1,S}$.
\end{construction}

\begin{lemma}
  \label{StandardStabilizer}
There are natural isomorphisms 
\begin{itemize}
\item[(a)] $\UAut^\otimes(\forget\!\circ\!\Fz_1) \cong \hat G_k$,
\item[(b)] $\UAut^\otimes(\descfil\!\circ\!\Fz_1) \cong \hat P_+$,
\item[(c)] $\UAut^\otimes(\ascfil\!\circ\!\Fz_1)  \cong \hat P_-^{(q)}$.
\end{itemize}
\end{lemma}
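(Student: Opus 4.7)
The plan is to identify each of the three composite functors with a fiber functor already studied in Section~\ref{FFFAC} and then invoke the corresponding result from there. First, by direct inspection of Construction~\ref{BasicZipFunctor}, $\forget\!\circ\!\Fz_1$ is the tautological fiber functor $\omega_{0,k}\colon V\mapsto V_k$. As recalled at the start of Section~\ref{Functors}, exact $\BF_q$-linear tensor functors $\GhRep\to\LocFree(\Spec k)$ correspond bijectively to $\Ghat_k$-torsors over $\Spec k$; here $\omega_{0,k}$ corresponds to the trivial torsor, so its tensor automorphism group scheme is $\Ghat_k$ acting through the representations. This gives~(a).

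For~(b), again from Construction~\ref{BasicZipFunctor}, $\descfil\!\circ\!\Fz_1$ coincides with $\descfil\!\circ\!\gamma_\chi$, the filtered fiber functor attached to $\chi$ in Section~\ref{FFFAC}. Then Proposition~\ref{AutOmegaChiIsP}(b) yields $\UAut^\otimes(\descfil\!\circ\!\gamma_\chi)\cong\hat P$, where $\hat P = \hat L\ltimes U$ and $U$ has Lie algebra equal to the sum of the positive weight spaces of $\Ad\!\circ\!\chi$. Under the conventions of Subsection~\ref{notation2} this $\hat P$ is precisely $\hat P_+$, establishing~(b).

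For~(c), the crucial observation is that the ascending filtration on $\Fz_1(V)$ is induced by the grading $V_k = \bigoplus_i(V_k^i)^{(q)}$ on $V_k$, which is exactly the grading coming from $\chi^{(q)}$ via the canonical identification $V_k^{(q)}\cong V_k$ (valid because $V$ is defined over $\BF_q$). Hence $\ascfil\!\circ\!\Fz_1 = \ascfil\!\circ\!\gamma_{\chi^{(q)}}$ under the identification $\Ghat_k^{(q)}\cong\Ghat_k$. We then invoke the ascending analog of Proposition~\ref{AutOmegaChiIsP}(b), obtained by the renumbering procedure indicated at the start of Section~\ref{FFFAC}: the stabilizer of the ascending filtration coming from a cocharacter $\chi'$ is the parabolic opposite to the one arising in the descending case, i.e., the parabolic whose unipotent radical has Lie algebra equal to the sum of the \emph{negative} weight spaces of $\Ad\!\circ\!\chi'$. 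Applied to $\chi' = \chi^{(q)}$ this parabolic is exactly $\hat P_-^{(q)}$, proving~(c). The only substantive point requiring care is the bookkeeping in~(c), namely tracking that the opposite parabolic $\hat P_-^{(q)}$ stabilizes the ascending filtration arising from $\chi^{(q)}$; everything else is routine unwinding of the definition of $\Fz_1$ and translating between the conventions of Section~\ref{FFFAC} and Subsection~\ref{notation2}.
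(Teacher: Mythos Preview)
Your proof is correct and follows essentially the same approach as the paper's proof: part~(a) is Tannaka duality (the paper cites \cite[Th.\,1.12]{DeligneGroth} directly rather than the torsor formulation via Nori, but these are the same result), and parts~(b) and~(c) are both reduced to Proposition~\ref{AutOmegaChiIsP}. Your version is simply more explicit about the identifications $\descfil\!\circ\!\Fz_1 = \descfil\!\circ\!\gamma_\chi$ and $\ascfil\!\circ\!\Fz_1 = \ascfil\!\circ\!\gamma_{\chi^{(q)}}$ and about the renumbering needed for the ascending case, which the paper leaves implicit.
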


\begin{proof}
(a) is an instance of the main theorem of Tannaka duality \cite[Th.$\,$1.12]{DeligneGroth},
% \cite{Saavedra} IV.2.2.5, 
and (b) and (c) are instances of Proposition \ref{AutOmegaChiIsP}.
\end{proof}

\begin{construction}
  \label{GZipFunctorToGZip}
Let $S$ be a scheme over~$k$, and $\Fz$ a $\hat G$-zip functor of type $c$ over~$S$. Then
\begin{itemize}
\item[(a)] $I_\Fz \defeq \UIsom^\otimes (\forget\!\circ\!\Fz_{1,S},\forget\!\circ\!\Fz)$ is a right $\hat G_k$-torsor over~$S$,
\item[(b)] $I_{\Fz,+} \defeq \UIsom^\otimes(\descfil\!\circ\!\Fz_{1,S},\descfil\!\circ\!\Fz)$ is a right $\hat P_+$-torsor over~$S$,
\item[(c)] $I_{\Fz,-} \defeq \UIsom^\otimes(\ascfil\!\circ\!\Fz_{1,S},\ascfil\!\circ\!\Fz)$ is a right $\hat P^{(q)}_-$-torsor over~$S$.
\end{itemize}
Indeed, (a) follows from Lemma \ref{StandardStabilizer} (a) and \cite[Th.$\,$1.12]{DeligneGroth}, and~(b) results from combining Lemma \ref{StandardStabilizer} (b) with Theorem \ref{FilIsomTorsor1a} above. Also, the commutativity of (\ref{FunctorSummaryDiagram}) shows that $(\ )^q \circ \gr^\bullet_C \circ\descfil\circ\Fz \cong \ascgr \circ\ascfil\circ\Fz$ is of type $c^{(q)}$, and so (c) follows from Lemma \ref{StandardStabilizer}~(c) and Theorem \ref{FilIsomTorsor1a}.

Moreover, composition with the functors forgetting the filtration induces a natural $\hat P_+$-equivariant embedding
$$\xymatrix@R-10pt{
I_{\Fz,+}\ \ar@{^{(}->}[r] & \ 
\UIsom^\otimes\bigl(\forget\!\circ\!\descfil\!\circ\!\Fz_{1,S},\forget\!\circ\!\descfil\!\circ\!\Fz\bigr)
\ar@{=}[d]^\wr \\
I_\Fz\ \ar@{=}[r]^-\sim & \ 
\UIsom^\otimes\bigl(\forget\!\circ\!\Fz_{1,S},\forget\!\circ\!\Fz\bigr) 
\\}$$
and likewise a natural $\hat P_-^{(q)}$-equivariant embedding $I_{\Fz,-} \into I_\Fz$. Furthermore, by Theorem \ref{TorsorQuotient} we have natural isomorphisms of $\hat L^{(q)}$-torsors in the rows of the following diagram, where the vertical isomorphism is induced by the isomorphism of tensor functors $\phi \colon (\ )^{(q)} \circ \gr^\bullet_C \circ \descfil\isoto \gr_\bullet ^D\circ \ascfil$ from (\ref{FunctorSummaryDiagram}):
$$\xymatrix@R-10pt{
I_{\Fz,+}^{(q)}/U_+^{(q)}\ \ar@{=}[r]^-\sim & \ 
\UIsom^\otimes((\ )^{(q)} \circ \descgr \!\circ\! \descfil \!\circ\! \Fz_{1,S}, (\ )^{(q)} \circ \descgr \!\circ\! \descfil \!\circ\!\Fz) 
\ar[d]^\wr \\
I_{\Fz,-}/U_-^{(q)}\ \ar@{=}[r]^-\sim & \ 
\UIsom^\otimes(\ascgr\!\circ\!\ascfil\!\circ\!\Fz_{1,S},\ascgr\!\circ\!\ascfil\!\circ\!\Fz) \\}$$
The composite is therefore an isomorphism of $\hat L^{(q)}$-torsors $\iota_\Fz \colon I_{\Fz,+}^{(q)}/U_+^{(q)}\isoto I_{\Fz,-}/U_-^{(q)}$. Together this data defines a $\hat G$-zip of type $(\chi,\Theta)$ over $S$
\[
\UI_\Fz \defeq (I_\Fz,I_{\Fz,+},I_{\Fz,-},\iota_\Fz).
\]
Clearly this construction is $\BF_q$-linearly functorial in $\Fz$ and compatible with pullback. Thus it defines a morphism of stacks
\UseTheoremCounterForNextEquation
\begin{equation}
\label{MorfMorf}
\GhZipFunctor_k^c\to\GhatZip^{(\chi,\Theta)}_k,\ \ \Fz\mapsto\UI_\Fz.
\end{equation}
\end{construction}

\begin{theorem}
\label{GZip=GZipFun}
This morphism is an isomorphism.
\end{theorem}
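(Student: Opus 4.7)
The plan is to construct an inverse morphism $\UI \mapsto \Fz_\UI$ by the associated-bundle construction and then verify that it inverts~\eqref{MorfMorf} on both sides, using Tannakian duality together with Theorems~\ref{FilIsomTorsor1a} and~\ref{TorsorQuotient} as the bookkeeping devices.

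First, for any $\Ghat$-zip $\UI = (I,I_+,I_-,\iota)$ of type $(\chi,\Theta)$ and any $V\in\GhRep$, I would set $\CM \defeq I\times^{\Ghat_k}V_k$. Since $\chi$ induces a grading of $V_k$ with associated descending filtration $C^\bullet(V_k)$ preserved by $\Phat_+$, and since $\chi^{(q)}$ induces a grading of $V_k^{(q)}=V_k$ with ascending filtration $D_\bullet(V_k)$ preserved by $\Phat_-^{(q)}$, the reductions $I_+\subset I$ and $I_-\subset I$ yield filtrations $C^\bullet\CM := I_+\times^{\Phat_+}C^\bullet(V_k)$ and $D_\bullet\CM := I_-\times^{\Phat_-^{(q)}}D_\bullet(V_k)$ on $\CM$. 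One has canonical identifications $(\gr_C^i\CM)^{(q)}\cong (I_+^{(q)}/U_+^{(q)})\times^{\Lhat^{(q)}}(V_k^i)^{(q)}$ and $\gr_i^D\CM\cong(I_-/U_-^{(q)})\times^{\Lhat^{(q)}}(V_k^i)^{(q)}$, so $\iota$ induces an isomorphism $\phi_i(V)\colon(\gr_C^i\CM)^{(q)}\stackrel{\sim}{\to}\gr_i^D\CM$. This yields an $F$-zip $\Fz_\UI(V)$ and thereby a functor $\Fz_\UI\colon\GhRep\to\FZip(S)$.

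Second, I would verify that $\Fz_\UI$ is an exact $\BF_q$-linear tensor functor. $\BF_q$-linearity is immediate, compatibility with tensor products follows from the distributivity of the associated-bundle construction over $\otimes$ and the fact that the filtrations on $V_k$ (resp.\ $V_k^{(q)}$) are tensor filtrations in the sense of Subsection~\ref{PreDescFilt}. Exactness reduces to faithful flatness of the torsors $I$, $I_\pm$ combined with exactness of the graded and filtered pieces (Section~\ref{Prelim}). The assignment $\UI\mapsto\Fz_\UI$ is manifestly functorial and compatible with pullback, so it defines a morphism of stacks $\GhatZip^{\chi,\Theta}_k\to\GhZipFunctor_k^c$.

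Third, I would check that the two constructions are mutually inverse. For $\UI_{\Fz_\UI}\cong\UI$: by construction $\forget\circ\Fz_\UI$ is the fiber functor associated to $I$, so by Tannakian duality $I_{\Fz_\UI}\cong I$ canonically as $\Ghat_k$-torsors; similarly $\descfil\circ\Fz_\UI$ is the filtered fiber functor associated to $I_+$, so Theorem~\ref{FilIsomTorsor1a} gives $I_{\Fz_\UI,+}\cong I_+$, and analogously $I_{\Fz_\UI,-}\cong I_-$; Theorem~\ref{TorsorQuotient} then identifies $\iota_{\Fz_\UI}$ with $\iota$. For $\Fz_{\UI_\Fz}\cong\Fz$: for each $V\in\GhRep$ one evaluates the isomorphism torsors at $V$ and applies the universal property of $\UIsom^\otimes$ to obtain a canonical isomorphism $\Fz_{\UI_\Fz}(V)\cong\Fz(V)$ of $F$-zips, natural and tensor-compatible in~$V$.

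The main obstacle is the precise bookkeeping in the compatibility of the Frobenius-twisted isomorphism $\phi$ with $\iota$: one must check that the isomorphism of $\Lhat^{(q)}$-torsors coming from $\phi\circ(\ )^{(q)}\colon\descgr\circ\descfil\circ\Fz\stackrel{\sim}{\longto}\ascgr\circ\ascfil\circ\Fz$ via Theorem~\ref{TorsorQuotient} really agrees with the $\iota_\Fz$ extracted in Construction~\ref{GZipFunctorToGZip}, and symmetrically that the $\phi_i(V)$ built from $\iota$ in the associated-bundle picture is precisely the one one obtains back. Both verifications amount to unwinding the natural isomorphism $\UIsom^\otimes(\ )^{(q)}/U^{(q)}\cong\UIsom^\otimes((\ )^{(q)}\circ\descgr\circ\descfil,\cdot)$ underlying Theorem~\ref{TorsorQuotient}, and using that by construction the basic zip functor $\Fz_1$ corresponds under the inverse construction to the trivial $\Ghat$-zip $\UI_1$.
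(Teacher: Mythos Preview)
Your proposal is correct and follows essentially the same approach as the paper. The only difference is presentational: you build the inverse $\UI\mapsto\Fz_\UI$ explicitly via the associated-bundle construction $I\times^{\Ghat_k}V_k$ (and its filtered refinements), whereas the paper invokes the essential surjectivity and fullness of the equivalences in Theorems~\ref{GradTorsor1a} and~\ref{FilIsomTorsor1a} abstractly to produce the functors $\psi_\pm$ with $I_\pm\cong\UIsom^\otimes(\ldots,\psi_\pm)$; since the associated-bundle construction is precisely the inverse to $\UIsom^\otimes$ in those equivalences, the two arguments are the same in content.
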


\begin{proof}
We construct a morphism in the other direction, as follows. Consider a $\hat G$-zip $\UI = (I,I_+,I_-,\iota)$ of type $(\chi,\Theta)$ over $S$. The essential surjectivity in Theorem \ref{FilIsomTorsor1a} shows that $I_- \cong \UIsom^\otimes(\ascfil\circ\Fz_{1,S},\psi_-)$ for some exact $\BF_q$-linear tensor functor
$$\psi_-\colon\GhRep\to\AscFilLocFree(S), \quad V\mapsto (\CM(V),D_\bullet).$$
The embedding $I_-\into I$ and the fullness in Theorem \ref{FilIsomTorsor1a} then yield an isomorphism $I\cong\UIsom^\otimes(\forget\circ\Fz_{1,S},\omega)$ with $\omega \defeq \forget\circ\psi_-\colon V\mapsto \CM(V)$.
% This in turn induces an isomorphism $I^{(q)}\cong\UIsom^\otimes(\forget\circ\Fz_{1,S},\omega^{(q)})$. 
The essential surjectivity in Theorem \ref{FilIsomTorsor1a} also shows that $I_+ \cong \UIsom^\otimes(\descfil\circ\Fz_{1,S},\psi_+)$ for some exact $\BF_q$-linear tensor functor 
$\psi_+\colon\GhRep\to\DescFilLocFree(S)$. The embedding $I_+\into I$ and the fullness in Theorem \ref{FilIsomTorsor1a} then yield an isomorphism $\forget\circ\psi_+ \cong \omega$. After replacing $\psi_+$ by an isomorphic functor we may therefore assume that $\psi_+$ has the form $V \mapsto (\CM(V),C^\bullet)$.
Moreover, Theorem \ref{TorsorQuotient} and $\iota$ yield isomorphisms
$$\xymatrix@R-10pt{
I_+^{(q)}/U^{(q)}_+\ \ar@{=}[r]^-\sim \ar[d]^\wr_\iota & \ 
\UIsom^\otimes((\ )^{(q)} \circ \descgr\!\circ\!\descfil\!\circ\!\Fz_{1,S},(\ )^{(q)} \circ \descgr\!\circ\!\psi_+) \\
I_-/U_-^{(q)}\ \ar@{=}[r]^-\sim & \ 
\UIsom^\otimes(\ascgr\!\circ\!\ascfil \!\circ\!\Fz_{1,S},\ascgr\!\circ\!\psi_-) \rlap{.} \\}$$
Thus the isomorphism $\phi_\bullet\colon (\ )^{(q)} \circ \gr\circ\descfil\circ\Fz_{1,S} \isoto \gr\circ\ascfil\circ\Fz_{1,S}$ from Construction \ref{BasicZipFunctor} and the fullness in Theorem \ref{GradTorsor1a} yield an isomorphism $(\ )^{(q)} \circ \descgr\circ\psi_+ \isoto \ascgr\circ\psi_-$. This amounts to graded isomorphisms $\phi_\bullet\colon (\gr_C^\bullet\CM(V))^{(q)} \isoto \gr^D_\bullet\CM(V)$ for all $V\in\GhRep$ that are functorial and compatible with tensor product. The assembled data thus determines a $\hat G$-zip functor
$$\Fz_\UI\colon V\mapsto \bigl( \CM(V), C^\bullet, D_\bullet, \phi_\bullet)$$
of type $c$ over $S$. By definition it satisfies $\descfil\circ\Fz=\psi_+$ and $\ascfil\circ\Fz=\psi_-$; comparing this construction with Construction \ref{GZipFunctorToGZip} therefore yields an isomorphism $\UI_{\Fz_\UI} \cong \UI$.

The faithfulness in Theorems \ref{FilIsomTorsor1a} and \ref{GradTorsor1a} implies that $\Fz$ is unique up to unique isomorphism. It is therefore functorial in $\UI$. As the construction is clearly compatible with pullback, it thus defines a morphism of stacks $\GhatZip^{(\chi,\Theta)}_k \to \GhZipFunctor_k^c$. Again by faithfulness the isomorphism $\UI_{\Fz_\UI} \cong \UI$ is functorial in $\UI$ and compatible with pullback; hence $\UI\mapsto\Fz_\UI$ is a right inverse of $\Fz\mapsto\UI_\Fz$. Moreover, applying the above construction to $\UI_\Fz$ for a $\hat G$-zip functor $\Fz$ one easily shows that $\Fz_{\UI_\Fz} \cong \Fz$, and so the morphism is also a left inverse. Thus the morphism (\ref{MorfMorf}) has a two-sided inverse and is therefore an isomorphism, as desired.
\end{proof}

%%%%%%%%%%%%%%%%%%%%%%%%%%%%%%%%%%%%%%%%%%%%%%%%%

%{\bf Theorem:} The category of $G$-zips of type $\chi$ over $S$ is equivalent to the category of fiber functors $\GRep\to\FZip_S$ of type $\chi$ over~$S$. 

%%% Local Variables: 
%%% mode: latex
%%% TeX-master: "Gzips"
%%% End: 

%\newpage
% \section{$G$-zips for special groups $G$}
\section{$F$-zips with additional structure}
\label{Fadd}

%%%%%%%%%%%%%%%%%%%%%%%%%%%%%%%%%%%%%%%%%%%%%%%%%

An important tool in the study of vector bundles is the equivalence between vector bundles of constant rank $n$ on a scheme $S$ and the associated $\GL_n$-torsors. One also uses the equivalence between vector bundles with a non-degenerate symmetric, alternating, resp.\ hermitian pairing and the associated torsors with respect to the orthogonal, symplectic, resp.\ unitary group. In this section we describe similar equivalences between $F$-zips of constant rank $n$ and $\GL_n$-zips, and between $F$-zips with additional structure such as a pairing and $G$-zips for certain associated linear algebraic groups~$G$.

Let $\Un = (n_i)_{i\in\BZ}$ be a family of non-negative integers which vanish for almost all~$i$, such that $n\defeq \sum_in_i\ge1$. 

%%%%%%%%%%%%%%%%%%%%%%%%%%%%%%%%%%%%%%%%%%%%%%%%%

\subsection{$F$-zips versus $\GL_n$-zips}
\label{GLnZips}

For any scheme $S$ over $k\defeq\BF_q$ we let $\FZip^\Un_k(S)$ denote the category whose objects are all $F$-zips of type $\Un$ over $S$ according to Definition \ref{FZipTypeDef} and whose morphisms are all isomorphisms. For varying $S$ this defines a category $\FZip^\Un_k$ fibered in groupoids over $(({\Sch}/k))$. Since $F$-zips consist of quasi-coherent sheaves and homomorphisms thereof, they satisfy effective descent with respect to any fpqc morphism $S'\to S$. Therefore $\FZip^\Un_k$ is a stack.

Choose a cocharacter $\chi\colon \BG_{m,k}\to \GL_{n,k}$ whose weights on the standard representation $k^n$ of $\GL_{n,k}$ are $i$ with multiplicity $n_i$ for all~$i$. This determines a grading of~$k^n$, whose associated descending and ascending filtrations we denote by $C^\bullet$ and~$D_\bullet$. Since $k=\BF_q$, there is a natural isomorphism
$\phi_{1\bullet}\colon (\gr_C^\bullet k^n)^{(q)} = \gr_C^\bullet k^n \stackrel{\sim}{\to} \gr^D_\bullet k^n$ turning 
\UseTheoremCounterForNextEquation
\begin{equation}\label{BasicGln}
\UCM_1 := (k^n,C^\bullet,D_\bullet,\phi_{1\bullet})
\end{equation}
into an $F$-zip of type $\Un$ over~$k$. As in Subsection \ref{FunctorsZips}, we compare arbitrary $F$-zips of type $\Un$ with this basic one.

Set $G\defeq\GL_{n,k}$, and let $P_\pm =  L\ltimes U_\pm$ be the parabolics of $G$ associated to~$\chi$, as in Subsection \ref{notation2}. Thus $P_+$ is the stabilizer of the filtration~$C^\bullet$, and $P_-$ is the stabilizer of~$D_\bullet$. Since $\chi$ is defined over $\BF_q$, we have $\chi^{(q)} = \chi$ and $P_\pm^{(q)} = L^{(q)}\ltimes U_\pm^{(q)} = P_\pm = L\ltimes U_\pm$.
% ; we will therefore drop the index ${}^{(q)}$ for these subgroups. 
Also, since $G$ is connected, we have $\Theta=1$ in this case.

\medskip
In the following, for any graded, filtered, or naked sheaves of $\CO_S$-modules $\CM_1$ and $\CM_2$
% we denote the pullback of a graded, filtered, or naked coherent sheaf $\CM$ on $S$ under a morphism $S'\to S$ by $\CM_{S'}$. Also, 
we let $\UIsom(\CM_1,\CM_2)$ denote the fpqc-sheaf on $(({\Sch}/S))$ sending $S'\to S$ to the set of (graded, filtered, resp.\ neither) isomorphisms $\CM_{1,S'}\stackrel{\sim}{\to}\CM_{2,S'}$. By composition of isomorphisms it carries a natural right action of the sheaf of groups $\UIsom(\CM_1,\CM_1) = \UAut(\CM_1)$. This sheaf is representable by a smooth affine group scheme over $S$ if $\CM_1$ is locally free of finite rank.

\begin{construction}
\label{FZipToGLnZip}
Let $\UCM = (\CM,C^\bullet,D_\bullet,\phi_\bullet)$ be an $F$-zip of type $\Un$ over $S$. Then $\CM$ is a locally free sheaf of rank $n$, and the filtered sheaves $(\CM,C^\bullet)$ and $(\CM,D_\bullet)$ are Zariski locally isomorphic to $(k^n,C^\bullet)_S$ and $(k^n,D_\bullet)_S$, respectively. Thus 
\begin{itemize}
\item[(a)] $I \defeq \UIsom((k^n)_S,\CM)$ is a right $\GL_n$-torsor over~$S$,
\item[(b)] $I_+ \defeq \UIsom\bigl((k^n,C^\bullet)_S,(\CM,C^\bullet)\bigr)$ is a right $P_+$-torsor over~$S$,
\item[(c)] $I_- \defeq \UIsom\bigl((k^n,D_\bullet)_S,(\CM,D_\bullet)\bigr)$ is a right $P_-^{(q)}$-torsor over~$S$.
\end{itemize}
Forgetting the filtration induces natural equivariant embeddings $I_\pm\into I$. Also, the functors $\descgr$ and $\ascgr$ induce natural isomorphisms $I_+/U_+\cong \UIsom\bigl( (\gr_C^\bullet k^n)_S , \gr_C^\bullet\CM \bigr)$ 
and $I_-/U_-^{(q)} \cong \UIsom\bigl( (\gr^D_\bullet k^n)_S , \gr^D_\bullet\CM \bigr)$. Moreover, the isomorphisms $\phi_{1\bullet}\colon (\gr_C^\bullet k^n)^{(q)} \allowbreak \stackrel{\sim}{\to} \gr^D_\bullet k^n$ and $\phi_\bullet\colon (\gr_C^\bullet\CM)^{(q)}\stackrel{\sim}{\to}\gr^D_\bullet\CM$ induce an isomorphism $\iota$ of $L$-torsors making the following diagram commute:
$$\xymatrix@C-15pt@R-10pt{
I_+^{(q)}/U_+^{(q)}\ \ar@{=}[r]^-\sim \ar[d]_\iota & \ 
\UIsom\bigl( (\gr_C^\bullet k^n)^{(q)}_S , (\gr_C^\bullet\CM)^{(q)} \bigr) 
\ar[d]^\wr \\
I_-/U_-^{(q)}\ \ar@{=}[r]^-\sim & \ 
\UIsom\bigl( (\gr^D_\bullet k^n)_S , \gr^D_\bullet\CM \bigr). \\}$$
% that we denote by~$\iota$.
Together this data defines a $\GL_n$-zip $\UI \defeq (I,I_+,I_-,\iota)$ of type $\chi$ over $S$.
Clearly this construction is $k$-linearly functorial in $\UCM$ and compatible with pullback. Thus it defines a morphism of stacks
\UseTheoremCounterForNextEquation
\begin{equation}
\label{GLnMorf}
\FZip^\Un_k\longto\GLnZip^\chi_k.
\end{equation}
\end{construction}

\begin{proposition}
This morphism is an isomorphism.
\end{proposition}

\begin{proof}
A morphism in the other direction is obtained by evaluating the inverse of the morphism (\ref{MorfMorf}) at the standard representation $V=k^n$ of $\GL_{n,k}$. It is straightforward to check that these two morphisms are mutually inverse.
\end{proof}

Combined with Theorem \ref{GZip=GZipFun} this shows in particular that giving an $F$-zip of type~$\Un$, or a $\GL_n$-zip of type~$\chi$, or a $\GL_n$-zip functor of type~$\chi$, are all equivalent. Also, combined with Proposition \ref{GZipStack} it shows that the isomorphism classes of $F$-zips of type $\Un$ over an algebraically closed field $K$ containing $\BF_q$ are in bijection with the $E_{\GL_n,\chi}(K)$-orbits on $\GL_n(K)$, which  in turn have a combinatorial description in terms of the Weyl group of $\GL_n$, as in Example \ref{SplitExample}. The analogous remarks apply to the cases treated in the rest of this section.

%%%%%%%%%%%%%%%%%%%%%%%%%%%%%%%%%%%%%%%%%%%%%%%%%

\subsection{$F$-zips with trivialized determinant versus $\SL_n$-zips}
\label{SLnZips}

Keeping the notations of the preceding subsection, we now assume that $\sum_in_ii=0$. Then the highest exterior power of any $F$-zip of type $\Un$ is an $F$-zip of rank $1$ whose filtrations are concentrated in degree~$0$.
We call a pair $(\UCM,\Delta)$ consisting of an $F$-zip $\UCM$ of type $\Un$ and an isomorphism $\Delta\colon \Alt^n\UCM\isoto\UBOne(0)$ an \emph{$F$-zip of type $\Un$ with trivialized determinant}. For the same reasons as before, the $F$-zips of type $\Un$ with trivialized determinant, together with isomorphisms of such pairs, form a stack over~$k$. 

Let $\Delta_1\colon \Alt^n(k^n)\isoto k$ denote the isomorphism induced by the determinant. With the basic $F$-zip from (\ref{BasicGln}) the pair $(\UCM_1,\Delta_1)$ is then an $F$-zip of type $\Un$ with trivialized determinant over~$k$. As in the preceding subsection, we compare arbitrary $F$-zips of type $\Un$ with trivialized determinant with this basic one.

The relevant linear algebraic group is now $\SL_{n,k}$. Clearly $\chi$ factors through $\SL_{n,k}$, so that we can speak of $\SL_n$-zips of type~$\chi$. The associated parabolics of $\SL_{n,k}$ are now $P_\pm\cap\SL_{n,k}$ with $P_\pm$ as in Subsection \ref{GLnZips}.

Note that $\Delta_1\colon \Alt^n(k^n)\isoto k$ is an isomorphism in the category $\SL_n$-$\Rep$ if the target $k$ is endowed with the trivial representation. The equivalence (\ref{SLnMorf}) below can be interpreted as saying that an $F$-zip with trivialized determinant is a partial $\SL_n$-zip functor containing just enough information to possess a unique extension to a full $\SL_n$-zip functor $\Fz\colon \SL_n$-$\Rep\to\FZip(S)$.

\begin{construction}
\label{FZipDetToSLnZip}
Let $(\UCM,\Delta)$ with $\UCM = (\CM,C^\bullet,D_\bullet,\phi_\bullet)$ be an $F$-zip of type $\Un$ with trivialized determinant over~$S$. Let $\UI \defeq (I,I_+,I_-,\iota)$ be the $\GL_n$-zip associated to $\UCM$ by Construction \ref{FZipToGLnZip}. Let $I' \subset I$ be the subsheaf of all isomorphisms $u\colon(k^n)_{S'}\isoto\CM_{S'}$ for which the composite 
$$\mathstrut\smash{\xymatrix{\Alt^n(k^n)_{S'} \ar[rr]^-{\Alt^nu} &&
  \Alt^n\CM_{S'} \ar[r]^-\Delta & (k)_{S'}}}$$
is equal to $\Delta_{1,S'}$. One easily checks that 
\begin{itemize}
\item[(a)] $I'$ is a right $\SL_n$-torsor over~$S$,
\item[(b)] $I'_+ \defeq I_+ \cap I'$ is a right $P_+\cap\SL_{n,k}$-torsor over~$S$,
\item[(c)] $I'_- \defeq I_- \cap I'$ is a right $P_-^{(q)}\cap\SL_{n,k}$-torsor over~$S$.
\end{itemize}
Moreover, the highest exterior power of a filtered locally free sheaf of finite rank is canonically isomorphic to the highest exterior power of the associated graded sheaf. Thus the isomorphism of $F$-zips $\Delta$ amounts to a commutative diagram of isomorphisms
$$\xymatrix@C-15pt{
{\mathstrut}\Alt^n(\gr^\bullet_C\CM)^{(q)}\, \ar@{=}[r]^-\sim \ar[d]^\wr_{\Alt^n\phi_\bullet} &
{\mathstrut}\, \Alt^n\CM^{(q)} \ar[rrr]^-{\Delta^{(q)}}_-\sim &&&
{\mathstrut}(k)_S^{(q)} \ar@{=}[d] \\
{\mathstrut}\Alt^n\gr_\bullet^D\CM\, \ar@{=}[r]^-\sim &
{\mathstrut}\, \Alt^n\CM \ar[rrr]^-\Delta_-\sim &&&
{\mathstrut}(k)_S \rlap{.} \\}$$
From this one easily deduces that the isomorphism $\iota\colon I^{(q)}_+/U_+^{(q)}\stackrel{\sim}{\to} I_-/U_-^{(q)}$ induces an isomorphism $\iota'\colon (I')^{(q)}_+/U_+^{(q)}\stackrel{\sim}{\to} I'_-/U_-^{(q)}$. Together the assembled data therefore defines an $\SL_n$-zip $\UI' \defeq (I',I'_+,I'_-,\iota')$ of type $\chi$ over~$S$. Clearly this construction is $\BF_q$-linearly functorial in $(\UCM,\Delta)$ and compatible with pullback. Thus it defines a morphism of stacks
\UseTheoremCounterForNextEquation
\begin{equation}
\label{SLnMorf}
\bigl(\!\bigl(\hbox{$F$-zips of type $\Un$ with trivialized determinant}\bigr)\!\bigr)
\longto\SLnZip^\chi_k.
\end{equation}
\end{construction}

\begin{proposition}
This morphism is an isomorphism.
\end{proposition}

\begin{proof}
By the remarks in Subsection \ref{Powers} and exactness, any zip functor $\Fz\colon \SLnRep\to\FZip(S)$ commutes with alternating powers; hence it sends $\Delta_1$ to an isomorphism $\Fz(\Delta_1)\colon \Alt^n(\Fz(k^n)) \cong \Fz(\Alt^n(k^n)) \isoto \Fz(k) = \UBOne(0)$. Therefore $\Fz\mapsto (\Fz(k^n),\Fz(\Delta_1))$ defines a morphism from the stack of $\SL_n$-zips of type $\chi$ to the stack of $F$-zips of type $\Un$ with trivialized determinant. Composed with the inverse of the morphism (\ref{MorfMorf}) we thus obtain a morphism in the other direction. The careful reader will be able to check that this is a two-sided inverse of the morphism (\ref{SLnMorf}).
\end{proof}

%%%%%%%%%%%%%%%%%%%%%%%%%%%%%%%%%%%%%%%%%%%%%%%%%

\subsection{Symplectic $F$-zips versus $\Sp_n$-zips}
\label{SymplecticZips2}

We call a pair $(\UCM,E)$ consisting of an $F$-zip $\UCM$ of type $\Un$ over~$S$ and an admissible epimorphism $E\colon \Alt^2\UCM\onto \UBOne(0)$, whose underlying alternating pairing $\CM\times\CM\to(k)_S$ is nondegenerate everywhere, a \emph{symplectic $F$-zip of type $\Un$ over~$S$}. For the same reasons as before these pairs, together with compatible isomorphisms, form a stack over $k:=\BF_q$. For this stack to be non-empty we assume that $n\defeq \sum_in_i$ is even and that $n_i=n_{-i}$ for all~$i$.

Fix a non-degenerate alternating pairing $E_1\colon k^n\times k^n\to k$, and let $\Sp_{n,k} \subset \GL_{n,k}$ denote the associated symplectic group. Then $E_1$ can be viewed as an equivariant epimorphism $\Alt^2(k^n)\onto k$, where $\Sp_{n,k}$ acts trivially on the target~$k$.
By the assumptions on $\Un$ there exists a cocharacter $\chi\colon \BG_{m,k}\to \Sp_{n,k}$, unique up to conjugation, whose weights on the standard representation $k^n$ of $\Sp_{n,k}$ are $i$ with multiplicity $n_i$ for all~$i$. Fixing such a cocharacter, we can thus speak of $\Sp_n$-zips of type~$\chi$ over any scheme $S$ over $k$.

To any symplectic $F$-zip $(\UCM,E)$ of type $\Un$ over~$S$ we can associate an $\Sp_n$-zip $\UI'\defeq (I',I_+',I_-',\iota')$ of type $\chi$ over~$S$. Namely, if $\UI \defeq (I,I_+,I_-,\iota)$ denotes the $\GL_n$-zip associated to $\UCM$ by Construction \ref{FZipToGLnZip}, we let $I'\subset I$ be the subsheaf of isomorphisms $(k^n)_{S}\isoto\CM_{S}$ which are compatible with $E_1$ and~$E$. From the fact that any two non-degenerate alternating pairings on the sheaf $\CO_{S}^{\oplus n}$ are conjugate under $\GL_n(S)$ one deduces that this is an $\Sp_{n,k}$-torsor. Also $I_+', I_-' \subset I'$ are the subsheaves of isomorphisms preserving the filtrations $C^\bullet$, respectively $D_\bullet$, and $\iota'$ is constructed from the isomorphism $\phi_\bullet$ in $\UCM$. Together we obtain a morphism of stacks
\UseTheoremCounterForNextEquation
\begin{equation}
\label{SpnMorf}
\bigl(\!\bigl(\hbox{symplectic $F$-zips of type $\Un$}\bigr)\!\bigr)
\longto\SpnZip^\chi_k.
\end{equation}

Conversely, for any $\Sp_n$-zip of type $\chi$ over $S$ we evaluate the associated zip functor $\Fz$ on the standard representation $k^n$ and the homomorphism $E_1$ in \hbox{$\Sp_{n,k}$-$\Rep$}, obtaining a symplectic $F$-zip $(\Fz(k^n),\Fz(E_1))$ of type $\Un$ over~$S$. By showing that this construction yields a two-sided inverse of the first one proves that the morphism (\ref{SpnMorf}) is an isomorphism.

%%%%%%%%%%%%%%%%%%%%%%%%%%%%%%%%%%%%%%%%%%%%%%%%%

\subsection{Twisted symplectic $F$-zips versus $\CSp_n$-zips}
\label{SymplecticZips1}

We call a triple $(\UCM,\UCL,E)$ consisting of an $F$-zip $\UCM$ of type $\Un$ over~$S$, an $F$-zip $\UCL$ of rank $1$ over~$S$, and an admissible epimorphism $E\colon \Alt^2\UCM\onto\UCL$, whose underlying alternating pairing $\CM\times\CM\to\CL$ is nondegenerate everywhere on~$S$, a \emph{twisted symplectic $F$-zip of type $\Un$ over~$S$}. For the same reasons as before these triples, together with compatible isomorphisms, form a stack over $k:=\BF_q$. For this stack to be non-empty we assume that $n\defeq \sum_in_i$ is even and that there is an integer $d$ satisfying $n_i=n_{d-i}$ for all~$i$. This $d$ is then unique, and the above $\UCL$ must be of type~$d$. 

Fix a non-degenerate alternating pairing $E_1\colon k^n\times k^n\to k$, and let $\CSp_{n,k}$ denote the associated group of symplectic similitudes, i.e., the group of all $g\in\GL_{n,k}$ satisfying $E_1\circ(g\times g)=\mu(g)\cdot E_1$ for a scalar $\mu(g)$.  Then $E_1$ can be viewed as a $\CSp_{n,k}$-equivariant epimorphism $\Alt^2(k^n)\onto k$, where $\CSp_{n,k}$ acts on the target $k$ through the multiplier character $\mu\colon \CSp_{n,k}\onto\BG_{m,k}$.
By the assumptions on $\Un$ there exists a cocharacter $\chi\colon \BG_{m,k}\to \CSp_{n,k}$, unique up to conjugation, whose weights on the standard representation $k^n$ of $\CSp_{n,k}$ are $i$ with multiplicity $n_i$ for all~$i$. Fixing such a cocharacter, 
% and observe that the composite $\mu \circ \chi\colon \BG_{m,k}\to\BG_{m,k}$ is given by $t\mapsto t^d$. 
we can thus speak of $\CSp_n$-zips of type~$\chi$ over any scheme $S$ over~$k$.

Using the same principles as before, to any twisted symplectic $F$-zip $(\UCM,\UCL,E)$ of type $\Un$ over~$S$ we can associate a $\CSp_n$-zip $\UI\defeq (I,I_+,I_-,\iota)$ of type $\chi$ over~$S$. 
In the interest of brevity we only sketch the construction: Here $I$ is the sheaf of pairs of isomorphisms $(k^n)_{S'}\isoto\CM_{S'}$ and $(k)_{S'}\isoto \CL_{S'}$ that are compatible with $E_1$ and~$E$. That this is a $\CSp_{n,k}$-torsor again results from the fact that any two non-degenerate alternating pairings on the sheaf $\CO_{S'}^{\oplus n}$ are conjugate under $\GL_n(S')$. Also $I_+, I_- \subset I$ are the subsheaves of isomorphisms preserving the filtrations~$C^\bullet$, respectively $D_\bullet$, and $\iota$ is constructed from the isomorphisms $\phi_\bullet$ in $\UCM$ and $\UCL$. Together we obtain a morphism of stacks
\UseTheoremCounterForNextEquation
\begin{equation}
\label{CSpnMorf}
\bigl(\!\bigl(\hbox{twisted symplectic $F$-zips of type $\Un$}\bigr)\!\bigr)
\longto\CSpnZip^\chi_k.
\end{equation}

Conversely, for any $\CSp_n$-zip of type $\chi$ over $S$ we evaluate the associated zip functor $\Fz$ on the standard representation $k^n$, the representation $k$ with the multiplier character~$\mu$, and the homomorphism~$E_1$, obtaining a twisted symplectic $F$-zip $(\Fz(k^n),\Fz(k),\Fz(E_1))$ of type $\Un$ over~$S$. By showing that this construction yields a two-sided inverse of the first one proves that the morphism (\ref{CSpnMorf}) is an isomorphism. The details in these arguments follow those in the preceding subsections and are left to the conscientious reader.

%%%%%%%%%%%%%%%%%%%%%%%%%%%%%%%%%%%%%%%%%%%%%%%%%

\subsection{Orthogonal $F$-zips versus $\groupO_n$-zips}
\label{OrthogonalZips2}

To avoid the usual idiosyncrasies of symmetric bilinear forms in characteristic $2$ we assume that $q$ is odd in this subsection and the next. We call a pair $(\UCM,B)$ consisting of an $F$-zip $\UCM$ of type $\Un$ over~$S$ and an admissible epimorphism $E\colon \Sym^2\UCM\onto \UBOne(0)$, whose underlying symmetric pairing $\CM\times\CM\to(k)_S$ is nondegenerate everywhere, an \emph{orthogonal $F$-zip of type $\Un$ over~$S$}. For the same reasons as before these pairs, together with compatible isomorphisms, form a stack over $k:=\BF_q$. For this stack to be non-empty we assume that $n_i=n_{-i}$ for all~$i$. 

Fix a non-degenerate split symmetric bilinear form $B_1\colon k^n\times k^n\to k$, and let $\groupO_{n,k} \subset \GL_{n,k}$ denote the associated orthogonal group. Then $B_1$ can be viewed as an equivariant epimorphism $\Sym^2(k^n)\onto k$, where $\groupO_{n,k}$ acts trivially on the target~$k$.
Note that $\groupO_{n,k}$ has two connected components and that its identity component is a split special orthogonal group $\SO_{n,k}$.
% , with a root system of type $B_{(n-1)/2}$ if $n$ is odd, respectively of type $D_{n/2}$ if  $n$ is even.
By the assumptions on $\Un$ there exists a cocharacter $\chi\colon \BG_{m,k}\to \groupO_{n,k}$, unique up to conjugation, whose weights on the standard representation $k^n$ are $i$ with multiplicity $n_i$ for all~$i$. We fix such a cocharacter and set $\Lhat := \Cent_{\groupO_{n,k}}(\chi)$ and $\Theta\defeq\pi_0(\Lhat) \subset \pi_0(\groupO_{n,k})$. A quick calculation shows that $\Lhat \cong \groupO_{n_0,k} \times \prod_{i>0}\GL_{n_i,k}$; hence $\Theta$ is trivial if $n_0=0$, and equal to $\pi_0(\groupO_{n,k})$ if $n_0>0$. According to Definition \ref{GZipDef} we can speak of $\groupO_n$-zips of type $(\chi,\Theta)$ over any scheme $S$ over $k$.

The definition of $\Lhat$ implies that the associated subgroups $\Phat_\pm$ from Subsection \ref{notation2} are precisely the stabilizers of the descending and ascending filtrations of $k^n$ induced by~$\chi$. Also, observe that any two non-degenerate symmetric pairings on the sheaf $\CO_S^{\oplus n}$ are fpqc-locally conjugate under $\GL_n$. Using these facts and the same construction as in Subsection \ref{SymplecticZips2}, to any orthogonal $F$-zip $(\UCM,B)$ of type $\Un$ over~$S$ we can associate an $\groupO_n$-zip of type $(\chi,\Theta)$ over~$S$, obtaining a morphism of stacks
\UseTheoremCounterForNextEquation
\begin{equation}
\label{OnMorf}
\bigl(\!\bigl(\hbox{orthogonal $F$-zips of type $\Un$}\bigr)\!\bigr)
\longto\OnZip^{\chi,\Theta}_k.
\end{equation}

Conversely, for any $\groupO_n$-zip of type $(\chi,\Theta)$ over $S$ we evaluate the associated zip functor $\Fz$ on the standard representation $k^n$ and the homomorphism $B_1$ in \hbox{$\groupO_{n,k}$-$\Rep$}, obtaining an orthogonal $F$-zip $(\Fz(k^n),\Fz(B_1))$ of type $\Un$ over~$S$. This construction yields a two-sided inverse of the first and thereby proves that the morphism (\ref{OnMorf}) is an isomorphism.

%%%%%%%%%%%%%%%%%%%%%%%%%%%%%%%%%%%%%%%%%%%%%%%%%

\subsection{Twisted orthogonal $F$-zips versus $\groupCO_n$-zips}
\label{OrthogonalZips1}

Again we assume that $q$ is odd. We call a triple $(\UCM,\UCL,B)$ consisting of an $F$-zip $\UCM$ of type $\Un$ over~$S$, an $F$-zip $\UCL$ of rank $1$ over~$S$, and an admissible epimorphism $B\colon \Sym^2\UCM\onto\UCL$, whose underlying symmetric pairing $\CM\times\CM\to\CL$ is nondegenerate everywhere on~$S$, a \emph{twisted orthogonal $F$-zip of type $\Un$ over~$S$}. For the same reasons as before these triples, together with compatible isomorphisms, form a stack over $k:=\BF_q$. For this stack to be non-empty we assume that there is an integer $d$ satisfying $n_i=n_{d-i}$ for all~$i$. This $d$ is then unique, and the above $\UCL$ must be of type~$d$. 

Fix a non-degenerate split symmetric bilinear form $B_1\colon k^n\times k^n\to k$. Let $\groupCO_{n,k}$ denote the associated group of orthogonal similitudes, i.e., the group of all $g\in\GL_{n,k}$ satisfying $B_1\circ(g\times g)=\mu(g)\cdot B_1$ for a scalar $\mu(g)$. Then $B_1$ can be viewed as a $\groupCO_{n,k}$-equivariant epimorphism $\Sym^2(k^n)\onto k$, where $\groupCO_{n,k}$ acts on the target $k$ through the character $\mu\colon \groupCO_{n,k}\onto\BG_{m,k}$.
If $n$ is odd, then $\groupCO_{n,k}$ is connected with a root system of type $B_{(n-1)/2}$ and is therefore split. If $n$ is even, then $\groupCO_{n,k}$ has two connected components and a root system of type $D_{n/2}$. 
In both cases the identity component of $\groupCO_{n,k}$ is split, because $B_1$ is split. Thus there exists a cocharacter $\chi\colon \BG_{m,k}\to \groupCO_{n,k}$, unique up to conjugation, whose weights on the standard representation $k^n$ of $\groupCO_{n,k}$ are $i$ with multiplicity $n_i$ for all~$i$. We fix such a cocharacter and set $\Lhat := \Cent_{\groupCO_{n,k}}(\chi)$ and $\Theta\defeq\pi_0(\Lhat) \subset \pi_0(\groupCO_{n,k})$.
A quick calculation shows that $\Lhat \cong \groupCO_{n_{d/2},k} \times \prod_{i>d/2}\GL_{n_i,k}$ if $d$ is even and $n_{d/2}>0$, and $\Lhat \cong \BG_{m,k} \times \prod_{i>d/2}\GL_{n_i,k}$ otherwise. Thus $\Theta$ is trivial unless $d$ is even and $n_{d/2}$ is even and positive, in which case $\Theta = \pi_0(\groupCO_{n,k})$ of order~$2$. In either case we can speak of $\groupCO_n$-zips of type $(\chi,\Theta)$ over any scheme $S$ over $k$.

In the same way as in Subsection \ref{SymplecticZips1}, to any twisted orthogonal $F$-zip $(\UCM,\UCL,B)$ of type $\Un$ over~$S$ we can associate a $\groupCO_n$-zip of type $(\chi,\Theta)$ over~$S$, obtaining a morphism of stacks
\UseTheoremCounterForNextEquation
\begin{equation}
\label{COnMorf}
\bigl(\!\bigl(\hbox{twisted orthogonal $F$-zips of type $\Un$}\bigr)\!\bigr)
\longto\COnZip^{\chi,\Theta}_k.
\end{equation}

Conversely, for any $\groupCO_n$-zip of type $(\chi,\Theta)$ over $S$ we evaluate the associated zip functor $\Fz$ on the standard representation $k^n$, the representation $k$ with the multiplier character~$\mu$, and the homomorphism~$B_1$, obtaining a twisted orthogonal $F$-zip $(\Fz(k^n),\Fz(k),\Fz(B_1))$ of type $\Un$ over~$S$. By showing that this construction yields a two-sided inverse of the first one proves that the morphism (\ref{COnMorf}) is an isomorphism. 

%%%%%%%%%%%%%%%%%%%%%%%%%%%%%%%%%%%%%%%%%%%%%%%%%

\subsection{Unitary $F$-zips versus $\groupU_n$-zips}
\label{UnitaryZips2}

Let $\BF_{q^2}$ denote a fixed quadratic extension of $\BF_q$, and let $\sigma$ denote its non-trivial automorphism $x\mapsto x^q$ over~$\BF_q$. 
%Let $\BF_{q^2}\otimes\UBOne(0)$ denote an $F$-zip isomorphic to $\UBOne(0)^{\oplus2}$ with an $\BF_q$-linear action of $\BF_{q^2}$.
Let $S$ be a scheme over~$\BF_q$. We call a triple $(\UCM,\rho,H)$ consisting of an $F$-zip $\UCM$ over~$S$, an $\BF_q$-algebra homomorphism $\rho\colon \BF_{q^2} \to \End(\UCM)$, and an admissible epimorphism $H\colon \UCM\otimes\UCM\to \BF_{q^2}\otimes_{\BF_q}\UBOne(0)$, which satisfies
\begin{enumerate}
\item[(a)] $H\circ(\rho(\alpha)\otimes\rho(\beta)) = (\alpha^q\beta\otimes1)\circ H$ for all $\alpha$, $\beta\in \BF_{q^2}$, and 
\item[(b)] $H(m_2,m_1) = (\sigma\otimes1)\circ H(m_1,m_2)$ for all local sections $m_1$, $m_2$ of~$\CM$,
\end{enumerate}
and whose hermitian pairing on the underlying sheaf $\CM\times\CM\to \BF_{q^2}\otimes_{\BF_q}\CO_S$ is nondegenerate everywhere, a \emph{unitary $F$-zip over~$S$}. 
To classify such objects we use base change fro $\BF_q$ to $\BF_{q^2}$:

\medskip
Let $\tilde S$ be a scheme over~$\BF_{q^2}$ and $(\tilde\UCM,\tilde\rho,\tilde H)$ a unitary $F$-zip over $\tilde S$. Then we have a unique decomposition $\tilde \CM=\tilde \CN\oplus\tilde \CN'$, where $\tilde \rho(\alpha)$ acts on $\tilde \CN$ through multiplication by $\alpha$ and on $\tilde \CN'$ through multiplication by~$\alpha^q$, and the hermitian pairing $\tilde H$ amounts to an isomorphism ${\tilde\CN'\stackrel{\sim}{\to}\tilde\CN^\vee}$. Working out the rest of the data we find that giving a unitary $F$-zip over $S$ is equivalent to giving a quadruple $(\tilde \CN,C^\bullet,D_\bullet,\psi_\bullet)$ consisting of a locally free sheaf of $\CO_S$-modules of finite rank $\tilde\CN$ on~$S$, a descending filtration $C^\bullet$ and an ascending filtration $D_\bullet$ of~$\tilde\CN$, and an $\CO_S$-linear isomorphism $\psi_i\colon (\gr_C^i\tilde\CN)^{(q)} \stackrel{\sim}{\to} (\gr^D_{-i}\tilde\CN)^\vee$ for every $i\in\BZ$.
%$\CM=\CN\oplus\CN^\vee$ 
%with $C^i\CM = C^i\CN\oplus (\CN/C^{1-i}\CN)^\vee$
%with $D_i\CM = D_i\CN\oplus (\CN/D_{-1-i}\CN)^\vee$
We call $(\tilde\UCM,\tilde\rho,\tilde H)$ \emph{of type $\Un$} if the associated $\gr_C^i\tilde \CN$ is locally free of constant rank $n_i$ for all~$i$. For the same reasons as before the unitary $F$-zips of type~$\Un$, together with compatible isomorphisms, form a stack over~$\BF_{q^2}$. %There is no further condition on $\Un$ in this case.

Let as above $S$ be a scheme over~$\BF_q$ and $(\UCM,\rho,H)$ a unitary $F$-zip over~$S$. We have $\CM = \proj_{2*}\widetilde\CM$ for a locally free sheaf of $\CO_{\tilde S}$-modules $\widetilde\CM$ on $\tilde S := \Spec\BF_{q^2}\times_{\Spec\BF_q}\nobreak S$, such that the action $\rho$ of $\BF_{q^2}$ is induced from the first factor. The $\BF_{q^2}$-invariant filtration $C^\bullet$ on $\CM$ then comes from a filtration of~$\widetilde\CM$. In this case we call a unitary $F$-zip \emph{of type $\Un$} if the associated $\gr_C^i\widetilde\CM$ is locally free of constant rank $n_i$ for all~$i$. Since the hermitian pairing $H$ induces isomorphisms $(\sigma\times\id)^*\gr_C^i\widetilde\CM \stackrel{\sim}{\to} (\gr_C^{-i}\widetilde\CM)^\vee$, this condition can be satisfied non-trivially only if $n_i=n_{-i}$ for all~$i$. Under this assumption the unitary $F$-zips of type~$\Un$, together with compatible isomorphisms, form a stack over~$\BF_q$. Moreover, a unitary $F$-zip is of type $\Un$ in this sense if and only if its pullback to $\tilde S$ is of type $\Un$ in the previous sense; hence the stack over $\BF_{q^2}$ described before is just the base change of the present stack over~$\BF_q$.

In summary, set $k:=\BF_q$ if $n_i=n_{-i}$ for all~$i$, respectively $k:=\BF_{q^2}$ if not; the unitary $F$-zips of type $\Un$ then form a natural stack over~$k$.

\medskip
Fix a non-degenerate $\sigma$-hermitian form $H_1\colon \BF_{q^2}^n\times \BF_{q^2}^n\to \BF_{q^2}$, and let $\groupU_{n,\BF_q} \subset \CR_{\BF_{q^2}/\BF_q}\GL_{n,\BF_{q^2}}$ denote the associated unitary group. 
% Let $\chi$ be a cocharacter of $\groupU_{n,\kbar}$ whose weights on the standard representation are $i$ with multiplicity $n_i$ for all~$i$. Then by construction the conjugacy class of $\chi$ has field of definition~$k$. Since $\groupU_{n,\BF_q}$ is quasi-split, this conjugacy class can be represented by a cocharacter defined over~$k$; so we may assume that $\chi$ itself is defined over~$k$. We can then speak of $\groupU_n$-zips of type~$\chi$ over any scheme $S$ over~$k$.
The assumptions on $\Un$ and $k$ imply that there exists a cocharacter $\chi\colon \BG_{m,k}\to \groupU_{n,k}$, unique up to conjugation, whose weights on the standard representation $\BF_{q^2}^n$ of $\groupU_{n,\BF_{q^2}}$ are $i$ with multiplicity $n_i$ for all~$i$. Fixing such a cocharacter, we can thus speak of $\groupU_n$-zips of type~$\chi$ over any scheme $S$ over~$k$.

Also, set $M_1 := \BF_{q^2}^n\otimes_{\BF_q}k$ with the descending filtration $C^\bullet$ associated to~$\chi$ and the ascending filtration $D_\bullet$ associated to the Frobenius twist $\chi^{(q)}$, so that there are natural $\BF_{q^2}\otimes_{\BF_q}k$-linear isomorphisms $\phi_i\colon (\gr_C^iM_1)^{(q)} \stackrel{\sim}{\to} \gr^D_iM_1$ for all $i\in\BZ$. Then $\UCM_1 \defeq (M_1,C^\bullet,D_\bullet,\phi_\bullet)$ together with the evident action of $\BF_{q^2}$ and the pairing $H_1$ is a unitary $F$-zip of type $\Un$ over~$k$.

\medskip
Using the same principles as in the preceding subsections, to any unitary $F$-zip $(\UCM,\rho,H)$ of type $\Un$ over~$S$ we can now associate a $\groupU_n$-zip $\UI \defeq (I,I_+,I_-,\iota)$ of type $\chi$ over~$S$. Here $I$ is the sheaf of all $\BF_{q^2}\otimes_{\BF_q}\CO_{S'}$-linear isomorphisms $M_{1,S'}\isoto\CM_{S'}$ which are compatible with $H_1$ and~$H$, for all morphisms $S'\to S$, and $I_\pm$ are the subsheaves of isomorphisms which are in addition compatible with the filtrations~$C^\bullet$, respectively~$D_\bullet$, and $\iota$ is obtained from the graded isomorphisms~$\phi_\bullet$. Together this yields a morphism of stacks
\UseTheoremCounterForNextEquation
\begin{equation}
\label{UnMorf}
\bigl(\!\bigl(\hbox{unitary $F$-zips of type $\Un$}\bigr)\!\bigr)
\longto\UnZip^\chi_k.
\end{equation}

Conversely, for any $\groupU_n$-zip of type $\chi$ over $S$ we evaluate the associated zip functor $\Fz$ on the standard representation $\BF_{q^2}^n$, the obvious homomorphism $\BF_{q^2}\to \End_{\groupU_{n,\BF_q}}(\BF_{q^2}^n)$, and the hermitian pairing~$H_1$ (all of which are objects and morphisms in $\groupU_n$-$\Rep$), obtaining a unitary $F$-zip of type $\Un$ over~$S$. By showing that this construction yields a two-sided inverse of the first one proves that the morphism (\ref{UnMorf}) is an isomorphism.

%%%%%%%%%%%%%%%%%%%%%%%%%%%%%%%%%%%%%%%%%%%%%%%%%

\subsection{Twisted unitary $F$-zips versus $\groupCU_n$-zips}
\label{UnitaryZips1}

Again let $\BF_{q^2}$ denote a fixed quadratic extension of $\BF_q$, and let $\sigma$ denote its non-trivial automorphism $x\mapsto x^q$ over~$\BF_q$. We call a quadruple $(\UCM,\rho,\UCL,H)$ consisting of an $F$-zip $\UCM$ over~$S$, an $\BF_q$-algebra homomorphism $\rho\colon \BF_{q^2} \to \End(\UCM)$, an $F$-zip $\UCL$ of rank $1$ over~$S$, and an admissible epimorphism $H\colon \UCM\otimes\UCM\to \BF_{q^2}\otimes_{\BF_q}\CL$, which satisfies the same conditions (a) and (b) as in the preceding subsection and whose hermitian pairing on the underlying sheaf is nondegenerate everywhere, a \emph{twisted unitary $F$-zip over~$S$}. 

\medskip
If $S$ is a scheme over~$\BF_{q^2}$, for any twisted unitary $F$-zip over~$S$ there is a unique decomposition $\CM=\CN\oplus\CN'$, where $\rho(\alpha)$ acts on $\CN$ through multiplication by $\alpha$ and on $\CN'$ through multiplication by~$\alpha^q$, and it is compatible with the filtration~$C^\bullet$. In this case we call a twisted unitary $F$-zip over~$S$ \emph{of type $(\Un,d)$} if the associated $\gr_C^i\CN$ is locally free of constant rank $n_i$ for all~$i$ and $\UCL$ is of type~$d$. For the same reasons as before the twisted unitary $F$-zips of type $(\Un,d)$, together with compatible isomorphisms, form a stack over~$\BF_{q^2}$. There is no further condition on $(\Un,d)$ in this case.

If $S$ is only a scheme over~$\BF_q$, we still have $\CM = \proj_{2*}\widetilde\CM$ for a locally free sheaf of $\CO_{\tilde S}$-modules $\widetilde\CM$ on $\tilde S := \Spec\BF_{q^2}\times_{\Spec\BF_q}S$, such that the action $\rho$ of $\BF_{q^2}$ is induced by the first factor. In this case we call a twisted unitary $F$-zip \emph{of type $(\Un,d)$} if the associated $\gr_C^i\widetilde\CM$ is locally free of constant rank $n_i$ for all~$i$ and $\UCL$ is of type~$d$.
Since the hermitian pairing $H$ must induce isomorphisms $(\sigma\times\id)^*\gr_C^i\widetilde\CM \stackrel{\sim}{\to} (\gr_C^{d-i}\widetilde\CM)^\vee\otimes\proj_2^*\CL$, this condition can be satisfied non-trivially only if $n_i=n_{d-i}$ for all~$i$. Under this assumption the twisted unitary $F$-zips of type $(\Un,d)$, together with compatible isomorphisms, form a stack over~$\BF_q$. Moreover, a twisted unitary $F$-zip is of type $(\Un,d)$ in this sense if and only if its pullback to $\tilde S$ is of type $(\Un,d)$ in the previous sense; hence the stack over $\BF_{q^2}$ described before is just the base change of the present stack over~$\BF_q$.

In summary, set $k:=\BF_q$ if $n_i=n_{d-i}$ for all~$i$, respectively $k:=\BF_{q^2}$ if not; the twisted unitary $F$-zips of type $(\Un,d)$ then form a natural stack over~$k$.

\medskip
Fix a non-degenerate $\sigma$-hermitian form $H_1\colon \BF_{q^2}^n\times \BF_{q^2}^n\to \BF_{q^2}$, and let $\groupCU_{n,\BF_q} \subset \CR_{\BF_{q^2}/\BF_q}\GL_{n,\BF_{q^2}}$ denote the associated group of unitary similitudes, i.e., of sections $g$ that satisfy $H_1\circ(g\times g)=\mu(g)\cdot H_1$ for a scalar $\mu(g)$ in $\BG_{m,\BF_q}$. The assumptions on $(\Un,d)$ and $k$ imply that there exists a cocharacter $\chi\colon \BG_{m,k}\to \groupCU_{n,k}$, unique up to conjugation, whose weights on the standard representation $\BF_{q^2}^n$ of $\groupU_{n,\BF_{q^2}}$ are $i$ with multiplicity $n_i$ for all~$i$, and whose weight under the multiplier character $\mu$ is~$d$. Fixing such a cocharacter, we can thus speak of $\groupCU_n$-zips of type~$\chi$ over any scheme $S$ over~$k$.

\medskip
By the same procedure as before, to any twisted unitary $F$-zip $(\UCM,\rho,H)$ of type $(\Un,d)$ over~$S$ we can associate a $\groupCU_n$-zip of type $\chi$ over~$S$, obtaining a morphism of stacks
\UseTheoremCounterForNextEquation
\begin{equation}
\label{CUnMorf}
\bigl(\!\bigl(\hbox{twisted unitary $F$-zips of type $(\Un,d)$}\bigr)\!\bigr)
\longto\CUnZip^\chi_k.
\end{equation}

Conversely, for any $\groupCU_n$-zip of type $\chi$ over $S$ we evaluate the associated zip functor $\Fz$ on the standard representation $\BF_{q^2}^n$, the obvious homomorphism $\BF_{q^2}\to \End_{\groupCU_{n,\BF_q}}(\BF_{q^2}^n)$, the multiplier representation on~$\BF_q$, and the hermitian pairing~$H_1$ (all of which are objects and morphisms in $\groupCU_n$-$\Rep$), obtaining a twisted unitary $F$-zip of type $(\Un,d)$ over~$S$. By showing that this construction yields a two-sided inverse of the first one proves that the morphism (\ref{CUnMorf}) is an isomorphism.

%%%%%%%%%%%%%%%%%%%%%%%%%%%%%%%%%%%%%%%%%%%%%%%%%

\subsection{Other groups}
\label{OtherZips}

For each of the groups $\Ghat$ above, we have identified a finite subcategory $\CC$ of \hbox{$\Ghat$-$\Rep$} and have shown that any suitable functor $\CC\to \FZip(S)$ extends to a $\Ghat$-zip functor \hbox{$\Ghat$-$\Rep$}${}\to\FZip(S)$. Surely it must be possible to apply the same principle to an arbitrary reductive linear algebraic group $\Ghat$ over~$k$. However, identifying a suitable subcategory $\CC$ becomes tiresome very quickly. 

For instance, it should be possible to describe $\SO_n$-zip functors in terms of triples $(\UCM,B,\Delta)$ consisting of an $F$-zip $\UCM$ of rank~$n$, an everywhere non-degenerate symmetric pairing $B$ on~$\UCM$, and a trivialization $\Delta$ of the highest exterior power of~$\UCM$. However, to guarantee the extendability to \hbox{$\SO_{n,k}$-$\Rep$} one must also impose a certain relation between $B$ and $\Delta$ which is more complicated to describe. Moreover, the identification of the type of an $\SO_n$-zip functor might require some extra data, because $\SO_n$ may possess non-conjugate cocharacters which are conjugate under $\GL_n$.
% Torsten sagt dazu: In diesem Fall sind die parabolischen Untergruppen im Allgemeinen nicht Stabilisatoren gewisser Flaggen sondern von sogenannten Oriflammen (siehe z.B. Bourbaki, Groupes et Algebrea de Lie, Chap IV, ¤2, Exercise 20). 
A similar situation arises for the group $\SU_{n,k}$.

% Maybe also $\CSO_n$-zips?

%%%%%%%%%%%%%%%%%%%%%%%%%%%%%%%%%%%%%%%%%%%%%%%%%

%\newpage
\section{Applications}\label{EO}

%%%%%%%%%%%%%%%%%%%%%%%%%%%%%%%%%%%%%%%%%%%%%%%%%

\subsection{Zip strata attached to smooth proper morphism with degenerating Hodge spectral sequence}
\label{Appl1}

%\subsubsection*{$F$-zips attached to smooth proper morphisms}
In the following we give a generalization of a construction from \cite{moonwed}. Let $S$ be a scheme over~$\BF_p$, let $\CX$ be a Deligne-Mumford stack and let $f\colon \CX \to S$ be a morphism of finite type. For every \'etale morphism $U \to \CX$, where $U$ is a scheme, we set $\Omega^{\bullet}_{\CX/S}|U \defeq \Omega^{\bullet}_{U/S}$, where $\Omega^{\bullet}_{U/S}$ is the de Rham complex of $U$ over $S$. As the formation of the de Rham complex $\Omega^{\bullet}_{U/S}$ commutes with \'etale localization on $U$, this defines a complex of quasi-coherent sheaves of $\CO_{\CX}$-modules of finite type on the \'etale site on $\CX$ whose differentials are $f^{-1}\CO_S$-linear.

Attached to the naive and the canonical filtration of the de Rham complex $\Omega^{\bullet}_{\CX/S}$ we obtain two spectral sequences converging to the de Rham cohomology $H^{\bullet}_{{\rm DR}}(\CX/S)= {\BR}^{\bullet}f_*(\Omega^{\bullet}_{\CX/S})$, namely the Hodge-de Rham spectral sequence
\[
{}_HE^{ab}_1 = R^bf_*(\Omega_{\CX/S}^a) \implies H^{a+b}_{{\rm DR}}(\CX/S)
\]
and the conjugate spectral sequence
\[
{}_{\rm conj}E^{ab}_2 = R^af_*\bigl(\Hscr^b(\Omega^{\bullet}_{\CX/S})\bigr)
\implies H^{a+b}_{{\rm DR}}(\CX/S).
\]
In particular these spectral sequences endow $H^d_{{\rm DR}}(\CX/S)$ for $d \geq 0$ with two descending filtrations $({}_HF^iH^d_{{\rm DR}}(\CX/S))_{i\in\BZ}$ and $({}_{\rm conj}F^iH^d_{{\rm DR}}(\CX/S))_{i\in\BZ}$ by sheaves of $\CO_S$-submodules which are called the \emph{Hodge filtration} and the \emph{conjugate filtration}.
 
We denote by $F\colon \CX \to \CX^{(p)}$ the relative Frobenius of $\CX$ over $S$. For \'etale morphisms $g\colon U \to \CX$ the diagram
\[\xymatrix{
U \ar[r]^{\rm Frob} \ar[d]_g & U \ar[d]^g \\
\CX \ar[r]^{\rm Frob} & \CX,
}\]
where the horizontal morphisms are the absolute Frobenii, is cartesian. This shows that the formation of the relative Frobenius also commutes with \'etale base change. In particular, $F$ is representable and finite.

If $f$ is smooth, there is a unique isomorphism of graded sheaves of $\Oscr_{\CX^{(p)}}$-modules
\[
\Cscr^{-1}\colon \bigoplus_{b\geq 0}\Omega^b_{\CX^{(p)}/S} \liso
\bigoplus_{b\geq 0}\Hscr^b\bigl(F_*(\Omega^{\bullet}_{\CX/S})\bigr),
\]
the (inverse) Cartier isomorphism, which satisfies
\[
\begin{aligned}
\Cscr^{-1}(1) &= 1 \\
\Cscr^{-1}(d\sigma^{-1}(x)) &= \text{class of $x^{p-1}dx$} \\
\Cscr^{-1}(\omega \wedge \omega') &= \Cscr^{-1}(\omega) \wedge
\Cscr^{-1}(\omega').
\end{aligned}
\]
To see this, we remark that because of the uniqueness assertion one may work locally for \'etale topology on $\CX^{(p)}$. As the formation of differentials, of $\Hscr^i(\ )$, and of Frobenius is compatible with \'etale base change $U \to \CX$, the unique existence of $\Cscr^{-1}$ for Deligne-Mumford stacks follows from the analogous result for smooth morphisms of schemes.

From now on we assume that $f$ is smooth and proper. We fix an integer $d \geq 0$. We assume that $f$ and $d$ satisfy the following two conditions.
\begin{simplelist}
\item[\textup{(D1)}]
The sheaves of $\Oscr_S$-modules $R^bf_*(\Omega^a_{\CX/S})$ are locally free of finite rank for all
$a,b \geq 0$ with $a + b \leq d$.
\item[\textup{(D2)}]
The Hodge-de Rham spectral sequence ${}_HE^{ab}_1 = R^bf_*(\Omega_{\CX/S}^a) \implies H^{a+b}_{{\rm DR}}(\CX/S)$ degenerates for $a + b \leq d$ (i.e., for all $r \geq 1$ and $a,b$ with $a + b \leq d$ the differentials from and to ${}_HE^{ab}_r$ vanish).
\end{simplelist}
Then the formation of the Hodge-de Rham spectral sequence for $a + b \leq d$ commutes with base change $S' \to S$, and $H^e_{\rm DR}(\CX/S)$ is locally free of finite rank for $e \leq d$.

Now one has $R^af_*^{(p)} \circ F_* = R^af_*$ because $F$ is affine. Hence applying the functor $R^af_*^{(p)}$ to the Cartier isomorphism we obtain an isomorphism
\[
R^af_*^{(p)}(\Omega^b_{\CX^{(p)}/S}) \liso R^af_*\bigl(\Hscr^b(\Omega^{\bullet}_{\CX/S})\bigr).
\]
Because of Condition~(D1) the $\Oscr_S$-modules $R^af_*(\Omega^b_{X/S})$ are flat for all $a,b \geq 0$ with $a + b \leq d$ and we obtain isomorphisms
\UseTheoremCounterForNextEquation
\begin{equation}\label{Cartier2}
\varphi^{ab}\colon R^af_*(\Omega^b_{\CX/S})^{(p)} =
({}_HE^{ba})^{(p)} \iso {}_{\rm conj}E^{ab}_2 =
R^af_*\bigl(\Hscr^b(\Omega^{\bullet}_{\CX/S})\bigr).
\end{equation}
This implies that the conjugate spectral sequence also degenerates for $a + b \leq d$ and that its formation commutes with arbitrary base change for $a + b \leq d$ (see~\cite{Katz_DiffEq}~2 if $\CX$ is a scheme; the arguments for Deligne-Mumford stacks $\CX$ are verbatim the same).

\begin{remark}
We list some examples of morphisms $f$ and integers $d$ that satisfy Conditions~(D1) and~(D2).
\begin{assertionlist}
\item[(a)]
By \cite{moonwed}, conditions~(D1) and~(D2) are satisfied for all $d$ in case $\CX$ is a smooth proper relative curve over $S$, in case $\CX$ is an abelian scheme over $S$, in case $\CX$ is a smooth toric scheme over $S$ and in case $\CX$ is a relative K3-surface over $S$. 
\item[(b)]
Conditions~(D1) and~(D2) are satisfied for all $d \leq p-1$ if there exists a flat scheme $\tilde S$ over $\BZ/p^2\BZ$ satisfying $\tilde S \otimes_{\BZ/p^2\BZ} \BF_p \cong S$ and a smooth proper lift of $\CX$ to $\tilde S$.

This is shown in \cite{DI_Deg} if $\CX$ is a scheme, and the proof carries over verbatim to the case of Deligne-Mumford stacks because the formation of the de Rham complex and the relative Frobenius is compatible with pull back via \'etale morphisms $X' \to X$ (see also \cite[Theorem~3.7]{Satriano_DeRham} by Satriano for a generalization to tame Artin stacks; note that Satriano formulates only the case where $S = \Spec k$ for a perfect field $k$ but combining his proof with the proof over a general base scheme in~\cite{DI_Deg} also shows the general case).
\item[(c)]
Let $S = \Spec k$ for a perfect field $k$. Let $X$ be a smooth proper scheme over $k$ and let $D \in {\rm Div}(X) \otimes \BQ$ be a $\BQ$-divisor whose support has only normal crossings and such that exists an integer $b$ prime to ${\rm char}(k)$ such that $bD$ is integral. Then in~\cite{MatsOls} there is attached a morphism $\CX \to X$, where $\CX$ is a smooth proper Deligne-Mumford stack which is the ``minimal covering'' of $X$ such that $D$ becomes integral. Moreover the authors show that each lift of $(X,D)$ over $W_2(k)$ yields also a smooth proper lift of $\CX$ making it possible to apply~(b). We refer to [loc.~cit.,Theorem~4.1] for details.
\end{assertionlist}
\end{remark}

We associate to~$f$ and $d$ an $F$-zip $(\CM,C^{\bullet},D_{\bullet},\varphi_{\bullet})$ over~$S$ as follows: Set $\CM = H^d_{{\rm DR}}(\CX/S)$. Let $C^{\bullet}$ be the Hodge filtration on~$\CM$, and define the
filtration~$D_{\bullet}$ by $D_i = {}_{\rm conj}F^{d-i}H^d_{{\rm DR}}(\CX/S)$. As the formation of both spectral sequences commutes with arbitrary base change, $C^{\bullet}$ and $D_{\bullet}$ are filtrations by locally direct summands, i.e. they are filtrations in the sense of Subsections~\ref{PreDescFilt} and~\ref{PreAscFilt}. The assumption of the degeneracy of the Hodge spectral sequence and hence of the conjugate spectral sequence shows that one has functorial isomorphisms
\UseTheoremCounterForNextEquation
\begin{equation}\label{GradedCD}
\begin{aligned}
\gr^i_CH^d_{\rm DR}(\CX/S) &\cong R^{d-i}f_*(\Omega^i_{\CX/S}) \\
\gr_i^DH^d_{\rm DR}(\CX/S) &\cong R^{d-i}f_*(\Hscr^i(\Omega^{\bullet}_{\CX/S}))
\end{aligned}
\end{equation}
Finally, let
\[
\varphi_i := \varphi^{d-i,i}\colon (\gr^i_C)^{(p)} =
R^{d-i}f_*(\Omega^i_{\CX/S})^{(p)} \iso  \gr_i^D =
R^{d-i}f_*(\Hscr^i(\Omega^{\bullet}_{\CX/S}))\, ,
\]
where $\varphi^{d-i,i}$ is the isomorphism defined in~\eqref{Cartier2}. We denote this $F$-zip by $\Hline^d_{\rm DR}(\CX/S)$. For $i \in \BZ$ set
\[
n_i := \begin{cases}h^{d-i,i} = {\rm rk}(R^{d-i}f_*(\Omega^i_{X/S})),&\text{for $0 \leq i \leq d$;}\\
0,&\text{otherwise.}
\end{cases}
\]
This is a locally constant function on $S$. If $n_i$ is constant for all $i$, which is automatic if $S$ is connected, then $\Un = (n_i)$ is the type of the $F$-zip $\Hline^d_{\rm DR}(\CX/S)$. Thus for $n\defeq \sum_i n_i$ the isomorphism \eqref{GLnMorf} yields a $\GL_n$-zip $\UI$ of type $\chi$ over $S$, where $\chi$ is the 
% conjugacy class of 
cocharacter of $\GL_n$ associated to $\Un$ as in Subsection~\ref{GLnZips}. By~\eqref{GZipStrat} we obtain a decomposition into locally closed subschemes
\[
S = \bigcup_{w \in \leftexp{I}{W}} S^{w}_{\UI}
\]
indexed by
\[
\leftexp{I}{W} = \set{w \in S_h}{\forall\,i \in \BZ: w^{-1}(\sum_{j < i}n_j+1) < \dots < w^{-1}(\sum_{j < i}n_j+n_i)}.
\]
Let $\preceq$ be the partial order on $\leftexp{I}{W}$ given by \eqref{DefPartialOrder}. By the inclusion \eqref{GZipStratClos} and Proposition \ref{Generizing1} one has
\[
\overline{S^{w}_\UI} \subseteq \bigcup_{w' \preceq w} S^{w'}_{\UI}
\]
with equality if the classifying morphism $S \to \GLnZip^{\chi}_{\BF_p}$ of the $\GL_n$-zip $\UI$ is generizing.

\subsection{Cup product and duality}
\label{Appl2}

The cup product in de Rham cohomology yields a bilinear map of $F$-zips, as follows. As the cup product has not yet been worked out for Deligne-Mumford stacks (as far as we know), we restrict ourself to the case that $f\colon X \to S$ is a smooth and proper morphisms of \emph{schemes} over $\BF_p$ satisfying conditions (D1) and (D2) for all $d$. Using the K\"unneth formula for hypercohomology of complexes (\cite{EGAIII_2}~(6.7.8), applicable because $\Omega^a_{X/S}$ and $H^d_{\rm DR}(X/S)$ are $S$-flat for all $a$ and $d$) one sees that the wedge product
\UseTheoremCounterForNextEquation
\begin{equation}\label{WedgeProduct}
\Omega^{\bullet}_{X/S} \otimes_{f^{-1}\Oscr_S} \Omega^{\bullet}_{X/S} \to \Omega^{\bullet}_{X/S}
\end{equation}
induces a homomorphism of locally free graded $\Oscr_S$-modules of finite rank
\UseTheoremCounterForNextEquation
\begin{equation}\label{CupProduct}
\cup\colon H^{\bullet}_{\rm DR}(X/S) \otimes_{\Oscr_S} H^{\bullet}_{\rm DR}(X/S) \lto H^{\bullet}_{\rm DR}(X/S),
\end{equation}
the cup product. This makes $H^{\bullet}(X/S)$ into a graded anti-commutative $\Oscr_S$-algebra. It is easily checked that the wedge product sends the tensor product of the naive (resp.\ canonical) filtrations to the naive (resp.\ canonical) filtration. Thus by functoriality of the spectral sequence associated to a filtered complex the cup product induces for all $d,e \geq 0$ a morphism of filtered locally free modules of finite rank
\UseTheoremCounterForNextEquation
\begin{equation}\label{CupComponent}
\cup\colon H^{d}_{\rm DR}(X/S) \otimes_{\Oscr_S} H^{e}_{\rm DR}(X/S) \lto H^{d+e}_{\rm DR}(X/S).
\end{equation}
In particular we obtain induced pairings on the associated graded pieces. Moreover, using the defining properties of the Cartier isomorphism, one sees that there is a commutative diagram
\UseTheoremCounterForNextEquation
\begin{equation}\label{HodgeGradedDual}
\begin{aligned}
\xymatrix{
\gr^i_CH^d_{\rm DR}(X/S)^{(p)} \otimes \gr^{j}_CH^{e}_{\rm DR}(X/S)^{(p)} \ar[r] \ar[d]_{\varphi^{d-i,i} \otimes \varphi^{e-j,j}} & \gr^{i+j}_CH^{d+e}_{\rm DR}(X/S)^{(p)}\ar[d]^{\varphi^{d+e-i-j,i+j}} \\
\gr_i^DH^d_{\rm DR}(X/S) \otimes \gr_{j}^DH^{e}_{\rm DR}(X/S) \ar[r] & \gr_{i+j}^DH^{d+e}_{\rm DR}(X/S).
}
\end{aligned}
\end{equation}
Hence we obtain a morphism of $F$-zips over $S$
\UseTheoremCounterForNextEquation
\begin{equation}\label{FZipCup1}
\cup\colon \Hline^d_{\rm DR}(X/S) \otimes \Hline^{e}_{\rm DR}(X/S) \to \Hline^{d+e}_{\rm DR}(X/S),
\end{equation}

\begin{example}
Let $A \to S$ be an abelian scheme. Then the cup product yields an isomorphism of graded anti-commutative algebras
\[
\Lambda^{\bullet}H^1_{\rm DR}(A/S) \iso H^{\bullet}_{\rm DR}(A/S)
\]
(see e.g.\ \cite[Prop. 2.5.2]{BBM}). The above arguments show that this is in fact an isomorphism of $F$-zips.
\end{example}

Now assume in addition that $f$ has geometrically connected fibers of fixed dimension $n$. Then we have a trace isomorphism
\UseTheoremCounterForNextEquation
\begin{equation}\label{Trace}
R^nf_*\Omega^n_{X/S} \cong H^{2n}_{\rm DR}(X/S) \lisoover{{\rm tr}} \Oscr_S.
\end{equation}
In other words, we obtain an isomorphism of $F$-zips
\UseTheoremCounterForNextEquation
\begin{equation}\label{Trace2}
\Hline^{2n}_{\rm DR}(X/S) \lisoover{{\rm tr}} \UBOne(n).
\end{equation}
The cup-product pairings of $F$-zips
\UseTheoremCounterForNextEquation
\begin{equation}\label{FZipCup2}
\Hline^d_{\rm DR}(X/S) \otimes_{\Oscr_S} \Hline^{2n-d}_{\rm DR}(X/S) \lto \Hline^{2n}_{\rm DR}(X/S) = \UBOne(n)
\end{equation}
are perfect dualities (\cite{Katz_DiffEq}~(2.3.5.1)), i.e. they yield isomorphisms of $F$-zips
\UseTheoremCounterForNextEquation
\begin{equation}\label{FZipCupPerfect}
\Hline^d_{\rm DR}(X/S) \iso \Hline^{2n-d}_{\rm DR}(X/S)^{\vee}(n).
\end{equation}
For $d = n$ the morphism~\eqref{FZipCup2} factors through
\UseTheoremCounterForNextEquation
\begin{equation}\label{MiddleDuality}
\begin{aligned}
v\colon \Lambda^2\Hline^n_{\rm DR}(X/S) \to \UBOne(n), & \qquad\text{if $n$ is odd;}\\
b\colon S^2\Hline^n_{\rm DR}(X/S) \to \UBOne(n), & \qquad\text{if $n$ is even.}
\end{aligned}
\end{equation}
In other words the pairing is symplectic if $n$ is odd and it is symmetric if $n$ is even.

For $n$ odd we hence obtain a twisted symplectic $F$-zip $(\Hline^n_{\rm DR}(X/S), \UBOne(n), v)$. For $n$ even (and $p > 2$) we obtain a twisted orthogonal $F$-zip $(\Hline^n_{\rm DR}(X/S), \UBOne(n), b)$.

\subsection{Zip strata attached to truncated Barsotti-Tate groups of level $1$}
\label{BT1}

Let $S$ be a scheme over $\BF_p$ and let $X$ be a truncated Barsotti-Tate group of level $1$ over $S$. We denote by $X\vdual$ its Cartier dual. Let $\BD(X)$ be its \emph{covariant} Dieudonn\'e crystal and let $\CM(X)$ be its evaluation at the trivial PD-thickening $(S,S,0)$. Then there is an exact sequence, functorial in $X$ and compatible with base change $S' \to S$
\UseTheoremCounterForNextEquation
\begin{equation}
\label{LieUniversalExt}
0 \to \omega_{X\vdual} \to \CM(X) \to \Lie(X) \to 0,
\end{equation}
where $\omega_{X\vdual} = e^*\Omega_{X\vdual/S}$ is the sheaf of $\CO_S$-modules of invariant differentials of $X\vdual$ (see \cite[3.2]{BBM}). In particular the relative Frobenius $F\colon X \to X^{(p)}$ and the Verschiebung $V\colon X^{(p)} \to X$ induce $\CO_S$-linear homomorphisms
\[
\CF := \CM(V)\colon \CM(X)^{(p)} \to \CM(X), \qquad \CV := \CM(F)\colon \CM(X) \to \CM(X)^{(p)}.
\]
Note that the roles of $F$ and $V$ are switched as we are considering covariant Dieudonn\'e theory. Moreover
\[
(\omega_{X\vdual})^{(p)} = \Ker(\CF) = {\rm Im}(\CV), \qquad \Ker(\CV) = {\rm Im}(\CF)
\]
are locally direct summands of $\CM(X)^{(p)}$ and of $\CM(X)$, respectively.

We attach an $F$-zips $\UCM(X) := (\CM(X),C^\bullet,D_\bullet,\phi_\bullet)$ as follows. Set
\begin{alignat*}{6}
C^0 &:= \CM(X), &\qquad C^1 &:= \omega_{X\vdual}, &\qquad C^2 &:= 0 \\
D_{-1} &:= 0, &\qquad D_0 &:= \Ker(\CV), &\qquad D_1 &:= \CM(X)
\end{alignat*}
and let
\[
\varphi_0\colon \CM(X)^{(p)}/(C^1)^{(p)} \to D_0, \qquad \varphi_1\colon (C^1)^{(p)} \to \CM(X)/D_0
\]
be the $\CO_S$-linear isomorphisms induced by $\CF$ and $\CV^{-1}$, respectively.

Altogether we obtain a functor $X \sends \UCM(X)$ from the category of truncated Barsotti-Tate groups of level $1$ over $S$ to the category of $F$-zips over $S$. Moreover it follows from~\cite[5.2]{BBM} that there is an isomorphism of $F$-zips
\UseTheoremCounterForNextEquation
\begin{equation}\label{DualBT1}
\UCM(X\vdual) \cong \UHom(\UCM(X),\UBOne(1)),
\end{equation}
which is functorial in $X$ and compatible with base change. If $d := {\rm rk}_{\CO_S}(\Lie X)$ is the dimension of $X$ and $n$ its height, then the type of the $F$-zip $\UCM$ is $(n_i)_i$ with
\UseTheoremCounterForNextEquation
\begin{equation}\label{typeBT1}
n_0 = d, \qquad n_1 = n - d, \qquad n_i = 0 \quad \text{for $i \ne 0,1$}.
\end{equation}

Truncated Barsotti-Tate groups of level $1$ of height $n$ and dimension $d$ over schemes over $\BF_p$ form a smooth algebraic stack ${\tt BT}_1^{n,d}$ of finite type over $\BF_p$ (\cite[Prop. 1.8]{Wd_DimOort}) and the above construction yields a morphism of algebraic stacks
\UseTheoremCounterForNextEquation
\begin{equation}\label{BT1FZip}
\Phi\colon {\tt BT}_1^{n,d} \to \FZip^\Un_{\BF_p},
\end{equation}
where $\Un$ is given by~\eqref{typeBT1}. By Dieudonn\'e theory this functor is an equivalence on points with values in a perfect field. In particular, for every algebraically closed field $K$ of characteristic $p$ we obtain a bijection
\UseTheoremCounterForNextEquation
\begin{equation}\label{ClassifyBT1}
\begin{aligned}
&\left\{
\begin{matrix}
\text{isomorphism classes of truncated Barsotti-Tate groups} \\
\text{over $K$ of level $1$, height $n$, and dimension $d$}
\end{matrix}
\right\} \\
\bijective\ 
&\set{w \in S_n}{w^{-1}(1) < \dots < w^{-1}(d), w^{-1}(d+1) < \dots < w^{-1}(n)}
\end{aligned}
\end{equation}
This was first proved by Moonen in \cite{Moonen_Weyl}.

The following results (all due to Eike Lau) show that $\Phi$ \eqref{BT1FZip} is a smooth (non-representable) morphism.

\begin{remark}\label{TruncBTSmooth}
Let $R$ be an $\BF_p$-algebra. Let $\sigma$ be the ring endomorphism $x \sends x^p$, let $I := R_{(\sigma)}$ be the restrictions of scalars of the $R$-module $R$ under $\sigma$, and let $\sigma_1\colon I \to R$ be the $\sigma$-linear map given by the identity of $R$. Lau has defined in \cite{Lau_SmoothTrunc} the notion of a \emph{display of level $1$} over $R$. Recall that this is a tuple ${\mathcal D} = (P,Q,\iota,\eps,F,F_1)$ consisting of $R$-modules $P$ and $Q$ together with $R$-linear maps $I \otimes P \ltoover{\eps} Q \ltoover{\iota} P$ such that $P$ and ${\Coker}(\iota)$ are finitely generated and projective and such that the following sequence is exact
\[
0 \lto I \otimes \Coker(\iota) \ltoover{\eps} Q \ltoover{\iota} P \lto \Coker(\iota) \lto 0.
\]
Finally, $F\colon P \to P$ and $F_1\colon Q \to P$ are $\sigma$-linear maps such that $F_1(Q)$ generates $P$ and such that $F_1 \circ \eps = \sigma_1 \otimes F$. The rank of $P$ is called the \emph{height of ${\mathcal D}$} and the rank of ${\Coker}(\iota)$ is called the \emph{dimension of ${\mathcal D}$}. One has the obvious notion of an isomorphism of displays of level $1$ and of base change for a ring homomorphism $R \to R'$. One obtains the category ${\tt Disp}^{n,d}_1$ of level $1$ displays of height $n$ and dimension $d$ fibered over the category of $\BF_p$-algebras. This is a smooth algebraic stack over $\BF_p$ (\cite{Lau_SmoothTrunc} Proposition~3.15).

To every display ${\mathcal D} = (P,Q,\iota,\eps,F,F_1)$ of level $1$ of height $n$ and dimension $d$ over $R$ one can attach an $F$-zip of type $\Un$ with $\Un$ as in \eqref{typeBT1} as follows. We set $M = P$, $C^1 = {\rm im}(\iota\colon Q \to P)$, $D_0 := {\rm im}(F^{\sharp}\colon P^{(\sigma)} \to P$), where $F^{\sharp}$ denotes the linearization of $F$, and we define $\varphi_0$, $\varphi_1$ as the linearizations of the $\sigma$-linear maps $\varphi_0^{\flat}$, $\varphi_1^{\flat}$ defined by the following commutative diagrams
\[\xymatrix{
P \ar[r]^F \ar[d] & P \ar[d] & & Q \ar[r]^{F_1} \ar[d] & P \ar[d]\\
P/C^1 \ar[r]^{\varphi_0^{\flat}} & D_0 & & C^1 \ar[r]^{\varphi_1^{\flat}} & P/D_0.
}\]
Then it is straight forward to check (by choosing a normal decomposition, see \cite{Lau_SmoothTrunc}~Subsection~3.2) that this contruction defines an equivalence of the category of displays of level $1$ of height $n$ and dimension $d$ over $R$ with the category of $F$-zips of type $\Un$ over $R$. We obtain an equivalence of algebraic stacks $\Theta\colon {\tt Disp}^{n,d}_1 \iso \FZip^\Un_{\BF_p}$.

Lau has defined a morphism $\Psi\colon {\tt BT}_1^{n,d} \to {\tt Disp}^{n,d}_1$ of algebraic stacks such that the composition with the equivalence $\Theta$ is the morphism $\Phi$ (\cite{Lau_SmoothTrunc}~Section.~4). Moreover he proves that $\Psi$ is a smooth morphism (Theorem~A of loc.~cit.) which shows the smoothness of $\Phi$.
\end{remark}

\begin{example}\label{RapZink}
Let $\BX$ be a $p$-divisible group of height $n$ and dimension $d$ over a finite field $\BF_q$ (where $q$ is a power of $p$). Let $\CN$ be the attached Rapoport-Zink space over $\BF_q$ (\cite{RaZi}), i.e., $\CN(S)$ consists for every scheme $S$ over $\BF_q$ of isomorphism classes of pairs $(X,\rho)$, where $X$ is a $p$-divisible group over $S$ and $\rho\colon \BX_S \to X$ is a quasi-isogeny. Then $\CN$ is representable by a formal scheme locally formally of finite type over $\BF_q$. Attaching to $X$ its $p$-torsion $X[p]$ defines a morphism
\[
\epsilon\colon \CN \to ({\tt BT}_1^{n,d}) \otimes \BF_q
\]
which is formally smooth by the main result of~\cite{Illusie_Def} and by Drinfeld's result that quasi-isogenies between $p$-divisible groups can always be deformed uniquely. Composing $\epsilon$ with the smooth morphism $\Phi$ from \eqref{BT1FZip} we obtain a formally smooth (and hence generizing) morphism $\CN \to \FZip^\Un_{\BF_q}$ and hence locally closed formal subschemes $\CN^w$ as in~\eqref{GZipStrat} with
\[
\overline \CN^w = \bigcup_{w' \preceq w} \CN^{w'}.
\]
This can be generalized to other Rapoport-Zink spaces.
\end{example}

For every truncated Barsotti-Tate group $X$ of level $1$ over a scheme $S$ over $\BF_p$
the morphism $\Phi\colon {\tt BT}_1^{n,d} \to \FZip^\Un_{\BF_p}$ from \eqref{BT1FZip} 
yields a homomorphism of the automorphism group schemes
\[
\alpha\colon \UAut(X) \to \UAut(\UCM(X)).
\]
As both stacks are quotient stacks of a linear group acting on a scheme of finite type over $\BF_p$, the two group schemes $\UAut(X)$ and $\UAut(\UCM(X))$ are affine and of finite type over $S$ (Proposition~\ref{AutoStack1}). If $S = \Spec K$ for an algebraically closed field~$K$, it is shown in \cite{Wd_DimOort}~(5.7), that $\alpha$ induces a homeomorphism of the reduced subgroup schemes
\[
\UAut(X)_{\rm red} \liso \UAut(\UCM(X))_{\rm red}.
\]
Hence we can use Proposition~\ref{AutoGZip} to describe $\UAut(X)$.

\begin{proposition}\label{AutBT1}
Let $X$ be a truncated Barsotti-Tate group of level $1$ over an algebraically closed field $K$ of characteristic $p$. Let $n$ be its height and $d$ the dimension of its Lie algebra. Let $w$ be the permutation corresponding to the isomorphism class of $X$ via the bijection~\eqref{ClassifyBT1}.
\begin{itemize}
\item[(a)]
The reduced subgroup scheme of the identity component $\UAut(X)$ is a unipotent linear algebraic group of dimension $d(n-d) - \ell(w)$. In particular
\[
\dim(\UAut(X)) = d(n-d) - \ell(w).
\]
\item[(b)]
The group of connected components of $\UAut(X)$ is isomorphic to the group $\Pi$ defined in Proposition~\ref{AutoGZip}.
\end{itemize}
\end{proposition}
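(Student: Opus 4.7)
My plan is to deduce this proposition by transferring the question from truncated Barsotti--Tate groups to $F$-zips via the functor $\Phi$ of \eqref{BT1FZip} and then applying Proposition~\ref{AutoGZip} in the case $\Ghat = \GL_n$.

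First I would set up the translation. The functor $X\mapsto\UCM(X)$ induces a homomorphism of affine group schemes of finite type $\alpha\colon \UAut(X) \to \UAut(\UCM(X))$ over $K$, and the cited result \cite{Wd_DimOort}~(5.7) recalled just before the proposition gives an isomorphism on reduced subgroup schemes $\UAut(X)_{\rm red} \liso \UAut(\UCM(X))_{\rm red}$. Since the dimension of an affine group scheme of finite type over a field equals the dimension of its underlying reduced subgroup scheme, and likewise for the finite group of connected components, it therefore suffices to establish (a) and (b) for $\UAut(\UCM(X))$ in place of $\UAut(X)$.

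Next I would translate to the $\GL_n$-zip side. By the equivalence \eqref{GLnMorf} of Subsection~\ref{GLnZips} applied with $\Un$ as in \eqref{typeBT1}, the $F$-zip $\UCM(X)$ corresponds to a $\GL_n$-zip $\UI$ of type $\chi$, where $\chi\colon\Gm{\BF_p}\to\GL_{n,\BF_p}$ is the cocharacter with weights $0$ (with multiplicity $d$) and $1$ (with multiplicity $n-d$). Under the bijection \eqref{ClassifyBT1}, which is itself an instance of the classification in Proposition~\ref{GZipStack} via Remark~\ref{StandardGZip} and Example~\ref{SplitExample}, the isomorphism class of $X$ corresponds to $w\in \leftexp{I}{W}$ where $W=S_n$ and $I$ is the type of the parabolic $P_+$ stabilizing the Hodge filtration. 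Consequently $\UAut(\UCM(X))\cong \UAut(\UI_{g\dot w})$.

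Now I apply Proposition~\ref{AutoGZip} for $G=\GL_{n,\BF_p}$ and the cocharacter $\chi$. For part~(a), the identity component of $\UAut(\UI_{g\dot w})$ is a connected unipotent group scheme of dimension $\dim(G/P_+) - \ell(w)$. Here $P_+$ is the parabolic of type $I$ corresponding to the partition $(d,n-d)$, so $\dim(G/P_+) = d(n-d)$. Combined with the identification of reduced subgroup schemes above, this shows that $\UAut(X)_{\rm red}^\circ$ is unipotent of dimension $d(n-d) - \ell(w)$, and hence that $\dim\UAut(X) = d(n-d) - \ell(w)$ (since the component group is finite). For part~(b), Proposition~\ref{AutoGZip}(c) identifies the component group of $\UAut(\UI_{g\dot w})$ with the finite group $\Pi$ defined there, and again the isomorphism of reduced subgroup schemes carries this over to the component group of $\UAut(X)$.

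The only nontrivial ingredient beyond direct invocation of the earlier results is the computation $\dim(G/P_+)=d(n-d)$, which is immediate from the type of $\chi$; the rest is bookkeeping. The main conceptual step is recognizing that the homeomorphism of \cite{Wd_DimOort}~(5.7) suffices to transport both the dimension of the identity component and the component group from the $F$-zip side back to $\UAut(X)$, which is legitimate precisely because both invariants are insensitive to non-reducedness.
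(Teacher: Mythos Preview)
Your proposal is correct and follows essentially the same approach as the paper: the paper's argument is precisely the paragraph preceding the proposition, which reduces to $\UAut(\UCM(X))$ via the isomorphism of reduced subgroup schemes from \cite{Wd_DimOort}~(5.7) and then invokes Proposition~\ref{AutoGZip}. You have simply made explicit the identification with $\GL_n$-zips and the computation $\dim(G/P_+)=d(n-d)$, both of which the paper leaves implicit.
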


Note that for a permutation $w \in S_n$ the length can be easily computed by
\[
\ell(w) = \#\set{(i,j)}{1 \leq i < j \leq n, w(i) > w(j)}.
\]

\bibliographystyle{plain}
\bibliography{references}
%%%%%%%%%%%%%%%%%%%%%%%%%%%%%%%%%%%%%%%%%%%%%%%%%%%%%%%%%%%%%%%%%%%%%%%%%%%%%%%%%%%%%%%%%%%

\end{document}